\newcolumntype{E}{>{\collectcell\usermacro}c<{\endcollectcell}}
\newcommand\usermacro[1]{\pgfmathparse{10000*#1}\pgfmathprintnumber\pgfmathresult}
\patchcmd{\ALG@step}{\addtocounter{ALG@line}{1}}{\refstepcounter{ALG@line}}{}{}
\newcommand{\ALG@lineautorefname}{Step}
\DeclareMathAlphabet{\mathup}{OT1}{\familydefault}{m}{n}
\DeclareSymbolFont{yhlargesymbols}{OMX}{yhex}{m}{n}
\DeclareMathAccent{\wideparen}{\mathord}{yhlargesymbols}{"F3}
\definecolor{grassgreen}{RGB}{92,135,39}
\crefname{algocf}{alg.}{algs.}
\Crefname{algocf}{Algorithm}{Algorithms}
\newcommand{\Oh}[1]{\mathcal{O}{\left(#1\right)}}
\newcommand{\logdet}[1]{\log\det\left(#1\right)}                    
\newcommand{\diag}[1]{\mathsf{diag}\left(#1\right)}                 
\newcommand{\del}[2]{\frac{\partial{#1}}{\partial{#2}}}             
\newcommand{\delll}[3]{\frac{\partial^2{#1}}{\partial{#2}\,\partial{#3}}} 
\newcommand{\mat}[1]{\mathbf{{#1}}}                                 
\renewcommand{\vec}[1]{\mathbf{{#1}}}                                 
\DeclareMathOperator*{\argmin}{arg\,min}  
\newcommand{\proj}{\mathit{P}}       
\newcommand{\Proj}[2]{\proj_{#1}{\left(#2\right)}}       
\newcommand{\regpenalty}{\alpha}                    
\newcommand*{\opt}{^{\mkern-1.5mu\mathrm{opt}}}               
\newcommand*{\tran}{^{\mkern-1.5mu\mathsf{T}}}                
\newcommand*{\adj}{^{\mkern-1.5mu\mathsf{*}}}                 
\newcommand*{\inv}{^{\mkern-1.5mu\mathsf{-1}}}                
\newcommand{\domain}{\mathcal{D}}                             
\newcommand{\trace}{\mathrm{Tr}}                              
\newcommand{\Trace}[1]{\trace \left(#1\right)}                
\newcommand{\abs}[1]{\left| {#1} \right|}                  
\newcommand{\norm}[1]{\left\| {#1} \right\|}                  
\newcommand{\sqnorm}[1]{\left\| {#1} \right\|^2}              
\newcommand{\wnorm}[2]{\left\| {#1} \right\|_{#2}}            
\newcommand{\sqwnorm}[2]{\left\| {#1} \right\|^2_{#2}}        
\newcommand\restr[2]{{ \left.\kern-\nulldelimiterspace        
                     {#1}\vphantom{\big|} \right|_{#2}}}
\newcommand{\Rnum}{\mathbb{R}}  
\newcommand{\xcont}{u}                             
\newcommand{\y}{\mathbf{y}}                        
\newcommand{\obs}{\y}                              
\newcommand{\param}{\vec{\theta}}                  
\newcommand{\iparam}{\param}                       
\newcommand{\iparprior}{\iparam_{\rm pr}}          
\newcommand{\iparb}{\iparprior}                    
\newcommand{\iparpost}{\iparam_{\rm post}^\obs}    
\newcommand{\ipara}{\iparpost}                     
\newcommand{\Nstate}{\textsc{N}_{\rm state}}                    
\newcommand{\Nobs}{\textsc{N}_{\rm obs}}                        
\newcommand{\Nens}{\textsc{N}_{\rm ens}}                        
\newcommand{\nobs}{n_{t}}                                  
\newcommand{\nobstimes}{\nobs}                             
\newcommand{\Nsens}{n_{\rm s}}                         
\newcommand{\budget}{\lambda}                         
\newcommand{\Cparamprior}{\mat\Gamma_{{\rm pr}}}                
\newcommand{\Cparampost}{\mat\Gamma_{{\rm post}}}               
\newcommand{\Cobsnoise}{\mat{\Gamma}_{ {\rm noise}}}            
\newcommand{\Cparampriormat}{\Cparamprior}                      
\newcommand{\Cparampostmat}{\Cparampost}                        
\newcommand{\wCparampostmat}{\Cparampostmat(\design)}           
\newcommand{\F}{\mathbf{F}}                                      
\newcommand{\obj}{\mathcal{J}}            
\newcommand{\stochobj}{\Upsilon}          
\newcommand{\optcriterion}{\Psi}
\newcommand{\Prob}{\mathbb{P}}                                   
\newcommand{\CondProb}[2]{\mathbb{P}\left(#1|#2 \right)}  
\newcommand{\Var}{\mathsf{var}}                                  
\newcommand{\Cov}{\mathsf{cov}}                                  
\newcommand{\brCov}[1]{\Cov{\Bigl(#1\Bigr)}}                                  
\newcommand{\brVar}[1]{\Var\Bigl(#1\Bigr)}                         
\newcommand{\GM}[2]{\mathcal{N}\!\left( {#1}, {#2}\right)}       
\newcommand{\like}{\mathcal{L}}                          
\newcommand{\Like}[2]{\like{\left(#1|#2\right)}}                                  
\newcommand{\Expect}[2]{\mathbb{E}_{#1}{\Bigl[ #2 \Bigr]} }     
\newcommand{\expect}[2]{\mathbb{E}_{#1}{#2} }     
\newcommand{\baseline}{b}
\newcommand{\hyperparam}{\vec{\theta}}     
\newcommand{\design}{\vec{\zeta}}                                       
\newcommand{\designmat}{\mat{W}}                                    
\newcommand{\wdesignmat}{\designmat_{\Gamma}}                       
\newcommand\reallywidehat[1]{%
\savestack{\tmpbox}{\stretchto{%
  \scaleto{%
    \scalerel*[\widthof{\ensuremath{#1}}]{\kern-.6pt\bigwedge\kern-.6pt}%
    {\rule[-\textheight/2]{1ex}{\textheight}}
  }{\textheight}%
}{0.5ex}}%
\stackon[1pt]{#1}{\tmpbox}%
}
\newcommand{\commentout}[1]{\iffalse {#1} \fi}
\newcommand{\todo}[1]{\noindent\textcolor{cyan}{TODO: #1\,}}
\crefname{hypothesis}{Hypothesis}{Hypotheses}
\crefname{lemma}{Lemma}{Lemmas}
\patchcmd{\SetTagPlusEndMark}{$}{}{}{}
\patchcmd{\SetTagPlusEndMark}{$}{}{}{}
\title{Stochastic Learning Approach to Binary Optimization for Optimal Design of Experiments\thanks{Submitted to the editors \today.
\funding{
  This material is based upon work supported by the U.S. Department of Energy,
Office of Science, under contract number DE-AC02-06CH11357.}}
}
\author{Ahmed Attia\thanks{Mathematics and Computer Science Division,
                   Argonne National Laboratory, USA
                  ( \email{attia@mcs.anl.gov} ).}
\and Sven Leyffer\thanks{Mathematics and Computer Science Division,
                   Argonne National Laboratory, USA
                   ( \email{leyffer@mcs.anl.gov} ).}
\and Todd Munson\thanks{Mathematics and Computer Science Division,
                   Argonne National Laboratory, USA
                   ( \email{tmunson@mcs.anl.gov} ).}
}
\newif\ifshowdetails
\newif\ifshowextensivedetails
\newif\ifshowresults
\newif\ifshowalgorithms
\newif\ifshowappendix
\newif\ifshowall
\begin{document}


\maketitle

\begin{abstract}
  We present a novel stochastic approach to binary optimization for optimal
  experimental design (OED) for Bayesian inverse problems governed by
  mathematical models such as partial differential equations.
  The OED utility function, namely, the regularized optimality criterion, 
  is cast into a stochastic objective function in the form of an 
  expectation over a multivariate Bernoulli distribution.
  The probabilistic objective is then solved by using a stochastic optimization
  routine to find an optimal observational policy.
  The proposed approach is analyzed from an optimization perspective 
  and also from a machine learning perspective with correspondence 
  to  policy gradient reinforcement learning.
  The approach is demonstrated numerically by using an idealized two-dimensional
  Bayesian linear inverse problem, and validated by extensive numerical experiments carried out for sensor placement 
  in a parameter identification setup.
  %
\end{abstract}

\begin{keywords}
  Design of experiments, binary optimization, Bayesian inversion, reinforcement learning. 
\end{keywords}

\begin{AMS}
  62K05, 35Q62, 62F15, 35R30, 35Q93, 65C60, 93E35
\end{AMS}



\section{Introduction} 
  \label{sec:introduction}
  Optimal experimental design (OED) is the general mathematical formalism 
  for configuring an observational setup.
  This can refer to the frequency of data collection or the spatiotemporal
  distribution of observational gridpoints for physical experiments.
  OED has seen a recent surge of interest in the field sensor placement for 
  applications of model-constrained Bayesian inverse problems; see, for 
  example,~\cite{Pukelsheim93,FedorovLee00,Pazman86,Ucinski00,HaberHoreshTenorio08,HaberMagnantLuceroEtAl12,HaberHoreshTenorio10,HuanMarzouk13,bui2013computational,alexanderian2016bayesian,AlexanderianSaibaba17,AlexanderianPetraStadlerEtAl16,AlexanderianPetraStadlerEtAl14,alexanderian2020optimal,attia2018goal,attia2020optimal}.

  Many physical phenomena can be simulated by using mathematical models. The
  accuracy of the mathematical models is limited, however, by the level at which the physics of the true
  system is captured by the model and by the numerical errors produced, for example, by computer simulations.
  Model-based simulations are often corrected based on snapshots of reality,
  such as sensor-based measurements. 
  Such corrections involve first inferring the model parameter from the noisy
  measurements, a problem referred to as  the ``inverse problem,'' which can be solved
  by different inversion or data assimilation methods; see e.g., ~\cite{bannister2017review,daley1993atmospheric,navon2009data,attia2016reducedhmcsmoother,attia2015hmcfilter}.
  These observations themselves are noisy
  but in general follow a known probability distribution.
  The OED problem is concerned with finding the most informative, and thus
  optimal, observational grid configuration out of a set of candidates, 
  which, when deployed, will result in a reliable solution of the inverse problem.
  
  OED is most beneficial when there is a limit on the number of sensors that can
  be used, for example, when sensors are expensive to deploy and/or operate.
  In order to solve an OED problem for sensor placement, a binary optimization problem is
  formulated by assigning a binary design variable $\design_i$ for each
  candidate sensor location.
  The objective is often set to a utility function that summarizes the 
  uncertainty in the inversion parameter or the amount of information gained 
  from the observations.
  The optimal design is then defined as the optimizer of this objective function.
  The objective is constrained by the model evolution equations, such as
  partial differential equations, and also by any regularity or sparsity
  constraints on the design.
  Here we focus on sparse and binary designs.
  A popular choice of a penalty function to enforce a sparse and binary design 
  is the $\ell_0$ penalty~\cite{AlexanderianPetraStadlerEtAl14}.

  Solving such binary optimization problems is computationally prohibitive, and
  often the binary optimization problem is replaced with a relaxation.
  In the relaxation of an OED problem, the design variable is allowed to 
  take any value in the interval $[0,1]$ and is often interpreted as 
  an importance weight of the candidate location.
  Once a solution of the relaxed problem is obtained numerically, for example, by a
  gradient-based optimization procedure, a binarization (rounding) procedure is 
  required to transform the solution of the relaxed problem into an equivalent 
  solution of the original problem.
  In general, however,  the numerical solutions of 
  the two problems are not guaranteed to be related or even similar.

  Using a gradient-based approach to solve the relaxed OED problem requires many 
  evaluations of the simulation model in order to 
  evaluate the objective function and its gradient.
  Moreover, this approach requires the penalty function to be differentiable 
  with respect to the design variables. 
  To this end, since $\ell_0$ is discontinuous, numerous efforts have been 
  dedicated to approximating the effect of $\ell_0$ sparsification 
  with other approaches.
  For example, in~\cite{HaberHoreshTenorio08}, an $\ell_1$ penalty followed by a
  thresholding method is used; however, $\ell_1$ is nonsmooth and hence is 
  nondifferentiable.
  Continuation procedures are used in~\cite{AlexanderianPetraStadlerEtAl14,koval2020optimal},
  where a series of OED problems are solved with a sequence of penalty functions
  that successively approximate $\ell_0$ sparsification.
  Another approach that can potentially induce a binary design is the 
  sum-up-rounding  algorithm~\cite{yu2018scalable}, which also provides
  optimality guarantees.

  Here we propose a new efficient approach for directly solving the original 
  binary OED problem.
  In this framework, the objective function, namely, the regularized optimality
  criterion, is cast into an objective 
  function in the form of an expectation over a multivariate 
  Bernoulli distribution. 
  A stochastic optimization approach is then applied to solve the reformulated
  optimization problem in order to find an optimal observational policy.
  
  Related work on stochastic optimization for OED has been  explored 
  in~\cite{huan2014gradient}, where the utility function, in other words, the optimality
  criterion, is approximated by using Monte Carlo estimators.
  A stochastic optimization procedure is then applied with the gradient of the
  utility function with respect to the design being approximated by using
  Robbins--Monro stochastic approximation~\cite{robbins1951stochastic}, 
  or sample average approximation~\cite{spall2005introduction,nemirovski2009robust}.

  The main differences from the standard OED approach and previous stochastic
  OED approaches, which highlight our main contributions in this work, 
  are as follows. 
  First, the proposed methodology searches for an optimal probability
  distribution representing the optimal binary design; hence it does not 
  require relaxation or binarization to follow the optimization step as in the
  traditional OED formulation. 
  Second, in our approach the penalized OED criterion is not required to be
  differentiable with respect to the design. Thus, one can incorporate the
  $\ell_0$ regularization norm to enforce sparsification without needing to
  approximate it with the $\ell_1$ penalty term or apply a continuation
  procedure to retrieve a binary optimal design. 
  Third, the proposed stochastic OED formulation does not require formulating
  the gradient of the objective function with respect to the design, thus
  implying a massive reduction in computation cost.
  Fourth, the proposed approach and solution algorithms are completely 
  independent from the specific choice of the OED optimality criterion 
  and the penalty function and hence  can be used with both linear and nonlinear
  problems.

  In this work, for simplicity we provide numerical experiments for 
  Bayesian inverse problems constrained by linear forward operators, 
  and we consider an A-optimal design, that is, a design that minimizes the trace of the
  posterior covariances of the inversion parameter.
  The approach proposed, however, extends easily to other OED optimality criteria, 
  such as D-optimality.
  We provide an analysis of the method from a mathematical point of view and
   an interpretation from a machine learning (ML) perspective. 
  Specifically, the proposed algorithm has
  a direct link to policy gradient algorithms widely used in neuro-dynamic
  programming and reinforcement
  learning~\cite{BertsekasTsitsiklis96,williams1992simple,sutton2000policy}.
  Numerical experiments using a toy example are provided to help with 
  understanding the problem; the proposed algorithm; and the relation between
  the binary OED problem, the relaxed OED problem, and the proposed formulation.
  Moreover, extensive numerical experiments are performed for optimal sensor
  placement for an advection-diffusion problem that simulates the 
  spatiotemporal evolution of the concentration of a contaminant in 
  a bounded domain. 

  The  paper is organized as follows. 
  \Cref{sec:background} gives a brief description of the Bayesian inverse
  problem and the standard formulation of an optimal experimental 
  design in the context of Bayesian inversion.
  In~\Cref{sec:problem_formulation}, we describe our proposed approach for
  solving the binary OED problem and present a detailed analysis of the
  proposed algorithms.
  An explanatory example and extensive numerical experiments are given
  in~\Cref{sec:numerical_experiments}.
  Discussion and concluding remarks are presented in~\Cref{sec:conclusions}.

  Throughout the paper, we use boldface symbols for vectors and matrices.
  $\diag{\vec{x}}$ is a diagonal matrix with diagonal entries set to the
  entries of the vector $\vec{x}$. 
  The $i$th cardinality vector in the Euclidean space $\Rnum^{n}$ is 
  denoted by $\vec{e}_i\in\Rnum^{n}$.
  Subscripts refer to entries of vectors and matrices, and square brackets are
  used to symbolize instances of a vector, for example, randomly drawn from a
  particular distribution.
  Superscripts with round brackets are reserved for iterations in 
  optimization routines.


\section{OED for Bayesian Inversion} 
  \label{sec:background}
  Consider the forward model described by
  \begin{equation}\label{eqn:forward_problem}
    \obs  = \F(\iparam) + \vec{\delta} \,,
  \end{equation}
  where $\iparam \in \Rnum^{\Nstate}$ is the discretized model parameter and
  $\obs \in \Rnum^{\Nobs}$ is the observation, where
  $\vec{\delta} \in \Rnum^{\Nobs}$ is the observation error.
  Assuming Gaussian observational noise 
  $\vec{\delta} \sim \GM{\vec{0}}{\Cobsnoise}$, then 
  the data likelihood takes the form
  \begin{equation} \label{eqn:Gaussian_likelihood}
    \Like{\obs}{ \iparam } \propto
      \exp{\left( - \frac{1}{2} 
        \sqwnorm{ \F(\iparam) - \obs }{ \Cobsnoise^{-1} } \right) } \,,
  \end{equation}
  where $\Cobsnoise$ is the observation error covariance matrix (positive definite) and 
  the matrix-weighted norm is defined as
  $\sqwnorm{\vec{x}}{\mat{A}} = \vec{x}\tran \mat{A} \vec{x} $.

  An inverse problem refers to the retrieval of the model parameter $\iparam$ from the
  noisy observation $\obs$, conditioned by the model dynamics.
  In Bayesian inversion, the goal is to study the probability distribution
  of $\iparam$ conditioned by the observation $\obs$ that is the posterior
  obtained by applying Bayes' theorem, 
  \begin{equation}\label{eqn:Bayes}
    \CondProb{\iparam}{\obs} \propto \CondProb{\obs}{\iparam} \Prob(\iparam) \,
  \end{equation}
  where $\Prob(\iparam)$ is the prior distribution of the inversion parameter
  $\iparam$, which in many cases is assumed to be Gaussian
  $\iparam \sim \GM{\iparb}{\Cparampriormat}$.
  If the parameter-to-observable map $\F$ is linear, the posterior  
  $\CondProb{\iparam}{\obs} $ is Gaussian $\GM{\ipara}{\Cparampostmat}$ 
  with
  \begin{equation}\label{eqn:Posterior_Params}
    \Cparampostmat = \left(\F \adj \Cobsnoise^{-1} \F 
      + \Cparampriormat^{-1} \right)^{-1} \,, \quad
    \ipara = \Cparampostmat \left( \Cparampriormat^{-1} \iparb 
      + \F\adj \Cobsnoise^{-1}\, \obs \right) \,,
  \end{equation}
  where $\F\adj$ is the forward operator adjoint.
  If the forward operator $\F$ is nonlinear, however, the posterior distribution
  is no longer  Gaussian. A Gaussian approximation of the posterior can be
  obtained in this case by linearization of $\F$ around the maximum a posteriori 
  (MAP) estimate, which is inherently data dependent.
  For simplicity, we will focus on linear models; however, the approach
  proposed in this work is not limited to the linear case. 
  Further analysis for nonlinear settings will follow in separate works.

  In a Bayesian OED context (see, e.g.,
  \cite{alexanderian2020optimal,alexanderian2016bayesian,AlexanderianPetraStadlerEtAl14,attia2018goal,attia2020optimal,HaberHoreshTenorio08,HaberHoreshTenorio10}), 
  the observation covariance $\Cobsnoise$ is replaced with a weighted version
  $\wdesignmat(\design)$, parameterized by the design $\design$, 
  resulting in the following weighted data-likelihood $\Like{\obs}{
  \iparam; \design}$ and posterior covariance $\wCparampostmat$. 
  \begin{equation} \label{eqn:weighted_likelihood_and_post_covariance}
    \Like{\obs}{ \iparam; \design}
      \propto \exp{\left( - \frac{1}{2} 
      \sqwnorm{ \F(\iparam) - \obs}{\wdesignmat(\design)} \right) } \,, 
      \quad
    \wCparampostmat
      = \left( \F\adj \wdesignmat(\design) \F
        + \Cparampriormat^{-1} \right)\inv 
      \,.
  \end{equation}

  The exact form of the posterior covariance $\wCparampostmat$ depends on how
  $\wdesignmat(\design)$ is formulated.
  We are  interested mainly in binary designs $\design\in \{0,1\}^{\Nsens}$,
  required, for example, in sensor placement applications; see, for example,~\cite{alexanderian2020optimal,attia2020optimal}.
  In this case, we assume a set of $\Nsens$ candidate sensor locations, and we
  seek the optimal subset of locations.
  Note that selecting a subset of the observations is equivalent to applying a 
  projection operator onto the subspace spanned by the activated sensors. 
  This is  equivalent to defining the weighed design matrix as 
  $\wdesignmat:=\Cobsnoise^{-1/2} \designmat \Cobsnoise^{-1/2}$,
  where $\designmat:=\diag{\design}$ is a diagonal matrix with binary values on
  its diagonal. The $i$th entry of the design is set to $1$ to activate a sensor
  and is set to $0$ to turn it off.

  The optimal design $\design\opt$ is the one that optimizes a predefined criterion
  $\optcriterion(\cdot)$ of choice. 
  The most popular optimality criteria in the Bayesian inversion context are 
  A- and D-optimality.
  Both criteria define the optimal design as the one that minimizes a scalar
  summary of posterior uncertainty associated with an inversion
  parameter or state of an inverse problem. 
  Specifically, $\optcriterion(\design):=\Trace{\Cparampost(\design)}$ for an A-optimal
  design, and $\optcriterion(\design):=\logdet{\Cparampost(\design)}$ for a D-optimal
  design, i.e., the sum of the eigenvalues of $\Cparampost(\design)$, 
  and the sum of the logarithms of the eigenvalues of $\Cparampost(\design)$, respectively.
  
  To prevent experimental designs that are simply dense, one typically adds
  a regularization term $\Phi(\design)$ that promotes sparsity, for example.
  Thus, to find an optimal design $\design\opt$, one needs to solve the following
  binary optimization problem,
  \begin{equation}\label{eqn:binary_optimization}
    \design\opt 
    = \argmin_{\design \in \{0, 1\}^{\Nsens}} \obj(\design) 
      := \optcriterion(\design)\, + \regpenalty \, \Phi(\design) \,,
  \end{equation}
  where the function $\Phi(\design)$ promotes regularization or sparsity on the
  design and  $\regpenalty$ is a user-defined regularization parameter.
  For example, $\Phi$ could represent a budget constraint on the form
  $\Phi(\design) := \wnorm{\design}{0} \leq k \,; k\in \mathtt{Z}_{+}$ 
  or a sparsifying (possibly discontinuous) function, for example, 
  $\wnorm{\design}{0}$ or $\wnorm{\design}{1}$.

  Traditional binary optimization approaches~\cite{wolsey1999integer} 
  for solving the 
  optimization problem are  expensive, rendering the exact solution
  of~\eqref{eqn:binary_optimization} computationally intractable.
  The optimization problem~\eqref{eqn:relaxed_optimization} is a continuous
  relaxation of~\eqref{eqn:binary_optimization} and is often used in practice
  as a surrogate for solving~\eqref{eqn:binary_optimization} with a suitable
  rounding scheme:
  \begin{equation}\label{eqn:relaxed_optimization}
    \design\opt 
    = \argmin_{\design \in [0, 1]^{\Nsens}} \obj(\design) 
      := \optcriterion(\design)\, + \regpenalty \, \Phi(\design)\,. 
  \end{equation}

  In sensor placement we seek a sparse design in order to minimize the deployment cost, and
  thus we would utilize $\ell_0$ as a penalty  function. Solving the relaxed
  optimization problem~\eqref{eqn:relaxed_optimization} follows a
  gradient-based approach, and thus using $\ell_0$ as a penalty function is
  replaced with the nonsmooth $\ell_1$ norm; 
  see~\cite{AlexanderianPetraStadlerEtAl14} for more details.

  The relaxed problem~\eqref{eqn:relaxed_optimization} provides a lower bound 
  on~\eqref{eqn:binary_optimization}, and any binary solution 
  $\design\opt\in\{0,1\}^\Nsens$ of~\eqref{eqn:relaxed_optimization}  
  is also an optimal solution of~\eqref{eqn:binary_optimization}.
  Given a solution of~\eqref{eqn:relaxed_optimization} that is not binary, 
  we can obtain a binary solution by rounding; however, there is no guarantee
  that the resulting binary solution is optimal, unless we use sum-up-rounding. 
  %
  %
  In~\Cref{sec:problem_formulation} we present a new approach for directly solving
  the original OED binary optimization problem~\eqref{eqn:binary_optimization}
  without the need to relax the design space or to round the relaxations.

\section{Stochastic Learning Approach for Binary OED}
  \label{sec:problem_formulation}
  Our main goal is to find the solution of the original binary regularized OED 
  problem~\eqref{eqn:binary_optimization} without solving a mixed-integer
  programming  problem or resorting to the relaxation 
  approach widely followed in OED for Bayesian inversion (as summarized
  in~\Cref{sec:background}).
  We propose to reformulate and solve the binary optimization 
  problem~\eqref{eqn:binary_optimization} as a stochastic optimization problem 
  defined over the parameters of a probability distribution.
  Specifically, we assume $\design$ is a random variable that follows a
  multivariate Bernoulli distribution.
  %

  \subsection{Stochastic formulation of the OED problem}
  \label{subsec:stochastic_OED}
    We associate with each candidate sensor location $x_i$ a probability of
    activation $\hyperparam_i$. 
    Specifically, we assume that $\design_i,\,i=1,2,\ldots,\Nsens$ are independent 
    Bernoulli random variables associated with the candidate sensor locations
    $x_i,\,i=1,2,\ldots,\Nsens$.  
    The activation (success) probabilities of the respective sensors 
    are $\hyperparam_i,\, i=1,2,\ldots,\Nsens$; that is,
    $\Prob(\design_i=1)=\hyperparam_i,\, \Prob(\design_i=0)=1-\hyperparam_i$.
    The probability associated with any observational configuration
    is then described by the joint probability of all candidate locations. 
    Note that assuming independent activation variables $\design_i$ does not 
    interfere the correlation structure of the observational errors, manifested
    by the observation error covariance matrix $\Cobsnoise$.
    Conversely, setting $\design_i$ to $0$ corresponds to removing the $i$th
    row and column from the precision matrix $\Cobsnoise\inv$, while setting the
    design variable value to $1$ corresponds to keeping the corresponding row
    and column, respectively.
    We let $\CondProb{\design}{\hyperparam}$ denote the joint multivariate Bernoulli
    probability, with the following probability mass function (PMF):
    \begin{equation}\label{eqn:joint_Bernoulli_pmf_prod}
      \CondProb{\design}{\hyperparam}
        := \prod_{i=1}^{\Nsens} {\hyperparam_i ^ {\design_i}\,
          \left(1-\hyperparam_i\right)^{1-\design_i}} \,,\quad 
            \design_i \in  \{0, 1\},\, \quad
            \hyperparam_i \in[0,1]   \,.
    \end{equation}

    We then replace the original problem~\eqref{eqn:binary_optimization}
    with the following stochastic optimization problem: 
    %
    \begin{equation}\label{eqn:stochastic_optimization}
      \hyperparam\opt = \argmin_{\hyperparam \in [0, 1]^{\Nsens} } 
        \stochobj(\hyperparam)
      := \Expect{\design\sim\CondProb{\design}{\hyperparam}}{\obj(\design)} 
      = \Expect{\design\sim \CondProb{\design}{\hyperparam}}
        {\optcriterion(\design)+ \regpenalty\, \Phi(\design)} \,.
    \end{equation}

    Because the support of the probability distribution is discrete, the possible
    values of $\design\in\{0,1\}^{\Nsens}$ are countable and can be assigned
    unique indexes.
    An index $k$ is assigned to each possible realization of
    $\design:=(\design_1,\design_2,\ldots,\design_{\Nsens})$, based on the
    values of its components using the
    relation
    \begin{equation}\label{eqn:binary_index}
      k = 1 + \sum_{i=1}^{\Nsens}{\design_i\, 2 ^{i-1}} \,, \quad \design_i\in\{0,1\} \,.
    \end{equation}
    Thus, all possible designs are labeled as
    $\design[k],k\!=\!1,2,\ldots,2^{\Nsens}$.  
    With the indexing scheme~\eqref{eqn:binary_index}, the optimization
    problem~\eqref{eqn:stochastic_optimization} takes the following equivalent
    form:
    \begin{equation}\label{eqn:stochastic_optimization_expanded}
      \hyperparam\opt = \argmin_{\hyperparam \in [0, 1]^{\Nsens} } 
        \stochobj(\hyperparam)
      := \sum_{k=1}^{2^{\Nsens}} \obj(\design[k]) \CondProb{\design[k]}{\hyperparam} \,.
    \end{equation}
    %
    
    To solve~\eqref{eqn:stochastic_optimization_expanded}, one can follow a
    gradient-based optimization approach to find the optimal parameter
    $\hyperparam\opt$.
    The gradient of the objective in~\eqref{eqn:stochastic_optimization_expanded} with
    respect to the distribution parameters $\hyperparam$ is 
    \begin{equation}\label{eqn:exact_gradient}
      \nabla_{\hyperparam}\,\stochobj(\hyperparam)
        = \nabla_{\hyperparam}\, 
          \Expect{\design\sim \CondProb{\design}{\hyperparam}}{\obj(\design)} 
        = \nabla_{\hyperparam}\, 
          \sum_{\design}\obj(\design) \CondProb{\design}{\hyperparam} 
        = \sum_{k=1}^{2^{\Nsens}} \obj(\design[k]) \nabla_{\hyperparam}\,
          \CondProb{\design[k]}{\hyperparam} 
        \,,
    \end{equation}
    where $\nabla_{\hyperparam}\, \CondProb{\design}{\hyperparam} $ is the
    gradient of the joint Bernoulli PMF~\eqref{eqn:joint_Bernoulli_pmf_prod}
    (see~\Cref{app:Bernoulli} for details):
    \begin{equation}\label{eqn:Bernoulli_PMF_gradient}
        %
      \nabla_{\hyperparam}\, \CondProb{\design[k]}{\hyperparam}
        = \sum_{j=1}^{\Nsens} 
          \left. \del{ \CondProb{\design[k]}{\hyperparam} }{\hyperparam_j} 
          \right|_{\design=\design[k]}
        = \sum_{j=1}^{\Nsens} (-1)^{1-\design_j[k]}
          \prod_{\substack{i=1 \\ i\neq j}}^{\Nsens} {\hyperparam_i ^
          {\design_i[k]}
          \left(1-\hyperparam_i\right)^{1-\design_i[k]}} \vec{e}_j \,.
    \end{equation}

    In~\Cref{app:Bernoulli}, we provide a detailed discussion of the multivariate
    Bernoulli distribution, with identities and lemmas that will be useful for
    the following discussion.
    
    
    Clearly,~\eqref{eqn:stochastic_optimization_expanded} and~\eqref{eqn:exact_gradient} 
    are not practical formula, 
    because their statement alone requires the evaluation of all possible 
    designs $\obj(\design[k])$, which is equivalent to complete 
    enumeration of~\eqref{eqn:binary_optimization}. 
    We show below how we can utilize stochastic gradient approaches 
    to avoid complete enumeration.
    Practical and efficient solution of~\eqref{eqn:stochastic_optimization} is
    discussed in~\Cref{subsec:stochastic_OED_solution}. 
    We first establish in~\Cref{subsec:benefits_of_stochastic_OED} the
    connection between the solution of the two problems
    \eqref{eqn:binary_optimization} and \eqref{eqn:stochastic_optimization}
    and discuss the benefits of the proposed approach.

  \subsection{Benefits of the stochastic formulation}
    \label{subsec:benefits_of_stochastic_OED}
    %
    
    The connections between the two problems 
    (\ref{eqn:binary_optimization} and~\ref{eqn:stochastic_optimization}) and
    their respective solutions are summarized
    by~\Cref{prop:binary_stochastic_connection}
    and~\Cref{lemma:binary_stochastic_solutions}.
    The relation between the objective functions in these two problems, 
    and their domains and codomains, is sketched in~\Cref{fig:two_problems}.

    \begin{proposition}\label{prop:binary_stochastic_connection}
      Consider the two functions 
      $\obj:\Omega_\design:=\{0,1\}^{\Nsens} \to \Rnum$ and 
      $\stochobj:\Omega_\hyperparam:=[0,1]^{\Nsens} \to \Rnum$,
      defined in~\eqref{eqn:binary_optimization} and~\eqref{eqn:stochastic_optimization}, 
      respectively.
      Let $\design \in\Omega_\design$ be a random vector following the multivariate 
      Bernoulli distribution~\eqref{eqn:joint_Bernoulli_pmf_prod}, 
      with parameter $\hyperparam \in \Omega_\hyperparam$.
      Let 
      $ C:=\{c\!\in\!\Rnum\,|\,c\!:=\!\obj(\design)\,,\, \design \in \Omega_\design \}$, be 
      the set of objective values corresponding to all possible designs,
      $\design\in\Omega_\design$. 
      Then:
      \begin{enumerate}[a)]
        %
          %
        \item $\Omega_\hyperparam$ is the convex hull of $\Omega_\design$ in
          $\Rnum^{\Nsens}$. 
        \item The values of $\stochobj$~\eqref{eqn:stochastic_optimization}
          form the convex hull of $C$ in $\Rnum$, denoted by $conv(C)$, with points
          $c\in C$ being the extreme points of $conv(C)$; that is,
          $conv(C)\equiv
          \{\stochobj(\hyperparam)|\hyperparam\in\Omega_\hyperparam \}$.
          %
        \item $\stochobj(\vec{x})=\obj(\vec{x})\, \forall
          \vec{x}\in{\Omega_\design}$.
        \item For any realization of $\hyperparam\in\Omega_\hyperparam$, 
          it holds that $\min\{C\} \leq \stochobj(\hyperparam) \leq
          \max\{C\}$; moreover,
          $\stochobj(\hyperparam\opt)=\min\{C\}=\obj(\design\opt)$.
          %
        %
      \end{enumerate}
    \end{proposition}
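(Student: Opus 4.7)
The plan is to prove the four items in the order (a), (c), (d), (b), because (b) depends on both the continuity/structure of $\stochobj$ and the pointwise identity established in (c). The unifying observation is that the product Bernoulli PMF in~\eqref{eqn:joint_Bernoulli_pmf_prod} furnishes a nonnegative weight vector on the $2^{\Nsens}$ vertices of $\Omega_\design$ that sums to one, so the expanded form~\eqref{eqn:stochastic_optimization_expanded} exhibits $\stochobj(\hyperparam)$ automatically as a convex combination of the scalars $\obj(\design[k])\in C$. This single structural fact drives most of the argument.

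For (a) I would invoke standard convex geometry of the unit hypercube: every $\hyperparam\in[0,1]^{\Nsens}$ is the vector of component means $\hyperparam_i=\Ave[\design_i]$, which rewrites as $\hyperparam=\sum_k \CondProb{\design[k]}{\hyperparam}\,\design[k]$, displaying $\hyperparam$ as a convex combination of binary vectors, while the reverse inclusion is immediate since any convex combination of points in $\{0,1\}^{\Nsens}$ lies in $[0,1]^{\Nsens}$. For (c), when $\vec{x}\in\{0,1\}^{\Nsens}$, each factor $x_i^{\design_i}(1-x_i)^{1-\design_i}$ in~\eqref{eqn:joint_Bernoulli_pmf_prod} is either $0$ or $1$, so the PMF degenerates to the Dirac mass $\delta_{\design,\vec{x}}$ and the expectation collapses to $\obj(\vec{x})$. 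Part (d) then follows because the convex-combination structure gives $\min\{C\}\le \stochobj(\hyperparam)\le\max\{C\}$ for every $\hyperparam$; the equality $\stochobj(\hyperparam\opt)=\min\{C\}$ is obtained by noting that $\design\opt\in\Omega_\design\subset\Omega_\hyperparam$ already attains $\min\{C\}$ via (c), so the global minimum of $\stochobj$ on $\Omega_\hyperparam$ cannot be strictly smaller.

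The genuinely nontrivial part is (b), where I must show that the image of $\stochobj$ is exactly $conv(C)$ and not a smaller set; this is the main obstacle, because the product Bernoulli weights realize only a proper subset of all probability distributions on $\Omega_\design$, so the argument cannot simply vary the weights freely. In $\Rnum$ we have $conv(C)=[\min\{C\},\max\{C\}]$, and (d) already supplies the ``$\subseteq$'' inclusion. For the reverse inclusion, I would observe that $\stochobj$ is a multilinear polynomial in $\hyperparam$, hence continuous, and $\Omega_\hyperparam$ is path-connected, so its image is a connected subset of $\Rnum$, that is, an interval. By (c) both endpoints $\min\{C\}=\obj(\design\opt)$ and $\max\{C\}=\obj(\argmax_{\design}\obj(\design))$ are attained by $\stochobj$ on the vertices of $\Omega_\hyperparam$, and the intermediate value theorem fills in everything in between, yielding $\{\stochobj(\hyperparam):\hyperparam\in\Omega_\hyperparam\}=conv(C)$. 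The extreme-point claim then reduces to the standard fact that the extreme points of a one-dimensional interval are its two endpoints, which lie in $C$ by construction.
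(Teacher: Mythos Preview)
Your proof is correct and, for parts (a), (c), and (d), follows essentially the same reasoning as the paper: the hypercube is the convex hull of its vertices, degenerate Bernoulli parameters yield Dirac masses, and the convex-combination structure of the expanded expectation~\eqref{eqn:stochastic_optimization_expanded} bounds $\stochobj$ between $\min\{C\}$ and $\max\{C\}$. The paper dresses up (d) with an appeal to Carath\'eodory's theorem, but your direct observation that a convex combination of scalars lies between the smallest and largest of them is all that is needed.

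Where you genuinely diverge is part (b). The paper's argument for (b) consists only of the remark that $\stochobj(\hyperparam)$ is a convex combination of the points in $C$, which establishes $\{\stochobj(\hyperparam):\hyperparam\in\Omega_\hyperparam\}\subseteq conv(C)$ but not the reverse inclusion claimed in the statement. You correctly identify this as the nontrivial direction and close it with a continuity argument: $\stochobj$ is multilinear and hence continuous on the connected set $\Omega_\hyperparam$, so its image is an interval, and since by (c) both $\min\{C\}$ and $\max\{C\}$ are attained at vertices, the intermediate value theorem forces the image to be all of $[\min\{C\},\max\{C\}]=conv(C)$. This is a clean and complete argument that the paper omits. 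You also sharpen the extreme-point assertion, noting that the extreme points of $conv(C)\subset\Rnum$ are merely its two endpoints, which lie in $C$; the paper's phrasing ``points $c\in C$ being the extreme points'' is imprecise on this count, and your reading is the correct one.
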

    \begin{proof}
      \begin{enumerate}[a)]
        \item This follows from the definition of $\Omega_\design$ and
          $\Omega_\hyperparam$ and the fact that $\Omega_\hyperparam$ is the
          hypercube in $\Rnum^{\Nsens}$ whose vertices formulate the set $\Omega_\design$.
        \item For any realization $\hyperparam$,  the function $\stochobj$
          defines a convex combination of all points in
          $C$, because the coefficients
          $\CondProb{\design}{\hyperparam},\,\design\in\Omega_\design$ are
          probabilities satisfying 
          that $0\leq \CondProb{\design}{\hyperparam}\leq 1$ and
          $\sum_{\design\in\Omega_\design}\CondProb{\design}{\hyperparam} = 1$.
        \item Setting $\hyperparam$ to any $\vec{x}\in\Omega_\design$ yields a
          degenerate multivariate Bernoulli distribution, which is a Dirac
          measure  $\delta_{\design}(\{\vec{x}\})$ defined on the set $\{\vec{x}\}$.
          Thus, $\CondProb{\design}{\hyperparam=\vec{x}} = \delta_{\design}(\{\vec{x}\})$,
          and
          $ \stochobj(\vec{x})
            = \sum_{\design\in\Omega_\hyperparam}
              \CondProb{\design}{\hyperparam\!=\!\vec{x}}\obj(\design) 
            = \sum_{\design\in\Omega_\hyperparam}
              \delta_{\design}(\{\vec{x}\})\, \obj(\design)
            = \obj(\vec{x}) \,
            $. 
        \item It follows from Carath\'eodory's theorem that for any realization 
          of $\hyperparam\in\Omega_\hyperparam$, the
          value of the objective $\stochobj(\hyperparam)$ falls on a line segment 
          in $\Rnum$ that connects at most two points in $C$.
          This guarantees that $ \min\{C\} \leq \stochobj(\hyperparam) \leq
          \max\{C\}$.
          Additionally, from points a) and b) above, it follows that
          $\stochobj(\hyperparam\opt)=\min\{conv(C)\}=\min\{C\}=\obj(\design\opt)$.
      \end{enumerate}
    \end{proof}

    %
    %
    \begin{figure}[ht] \centering
      \resizebox{0.60\textwidth}{!}{%
      \tikzset{every picture/.style={line width=0.75pt}} 
        \begin{tikzpicture}[x=0.75pt,y=0.75pt,yscale=-1,xscale=1]
          \draw (114,60.4) node [anchor=north west][inner sep=0.75pt] 
            { $\Omega_\design :=\{0,1\}^{\Nsens}$ };
          \draw (415,60.4) node [anchor=north west][inner sep=0.75pt]
            { $\Omega_\hyperparam :=[ 0,1]^{\Nsens}$ };
          \draw (272,51) node [anchor=north west][inner sep=0.75pt] [align=left] 
            { 
              \begin{minipage}[lt]{37.304392pt}\setlength\topsep{0pt}
                \begin{center}
                  Convex\\Hull
                \end{center}
              \end{minipage}
            };
          \draw (151,110.4) node [anchor=north west][inner sep=0.75pt] 
            { $\obj( \design )$ };
          \draw (91,165.4) node [anchor=north west][inner sep=0.75pt]    
            { $C:=\{j_{1} ,j_{2} ,\dotsc ,j_{2^{\Nsens}}\}$ };
          \draw (362,165.4) node [anchor=north west][inner sep=0.75pt]    
            { $conv( C) :=[ \min \{ C\} , \max\{ C \}]$ };
          \draw (275,156) node [anchor=north west][inner sep=0.75pt] [align=left] 
            { 
              \begin{minipage}[lt]{37.304392pt}\setlength\topsep{0pt}
                \begin{center}
                  Convex\\Hull
                \end{center}
              \end{minipage}
            };
          \draw (452,114.4) node [anchor=north west][inner sep=0.75pt]  
            { $\stochobj(\hyperparam )$ };
            %
          \draw (220,70) -- (267,70) ;
          \draw [shift={(269,70)}, rotate = 180] 
                [color={rgb, 255:red, 0; green, 0; blue, 0 } ]
                [line width=0.75]    
                (10.93,-3.29) .. controls (6.95,-1.4) and 
                (3.31,-0.3) .. (0,0) .. controls (3.31,0.3) and 
                (6.95,1.4) .. (10.93,3.29)   ;
          \draw (328,70.18) -- (410,70.67) ;
          \draw [shift={(412,70.68)}, rotate = 180.34] 
                [color={rgb, 255:red, 0; green, 0; blue, 0 } ]
                [line width=0.75]    
                (10.93,-3.29) .. controls (6.95,-1.4) and 
                (3.31,-0.3) .. (0,0) .. controls (3.31,0.3) and 
                (6.95,1.4) .. (10.93,3.29)   ;
          \draw (165.35,84) -- (165.15,104) ;
          \draw [shift={(165.13,106)}, rotate = 270.6] 
                [color={rgb, 255:red, 0; green, 0; blue, 0 } ]
                [line width=0.75]  
                (10.93,-3.29) .. controls (6.95,-1.4) and 
                (3.31,-0.3) .. (0,0) .. controls (3.31,0.3) and 
                (6.95,1.4) .. (10.93,3.29)   ;
          \draw (243,174.78) -- (270,174.88) ;
          \draw [shift={(272,174.89)}, rotate = 180.21] 
                [color={rgb, 255:red, 0; green, 0; blue, 0 } ]
                [line width=0.75]    
                (10.93,-3.29) .. controls (6.95,-1.4) and 
                (3.31,-0.3) .. (0,0) .. controls (3.31,0.3) and 
                (6.95,1.4) .. (10.93,3.29)   ;
          \draw (331,174.64) -- (357,174.33) ;
          \draw [shift={(359,174.3)}, rotate = 539.31] 
                [color={rgb, 255:red, 0; green, 0; blue, 0 } ]
                [line width=0.75]    
                (10.93,-3.29) .. controls (6.95,-1.4) and 
                (3.31,-0.3) .. (0,0) .. controls (3.31,0.3) and 
                (6.95,1.4) .. (10.93,3.29)   ;
          \draw (165.11,130) -- (165.36,159) ;
          \draw [shift={(165.38,161)}, rotate = 269.49] 
                [color={rgb, 255:red, 0; green, 0; blue, 0 } ]
                [line width=0.75]    
                (10.93,-3.29) .. controls (6.95,-1.4) and 
                (3.31,-0.3) .. (0,0) .. controls (3.31,0.3) and 
                (6.95,1.4) .. (10.93,3.29)   ;
          \draw (466,85) -- (466,108) ;
          \draw [shift={(466,110)}, rotate = 270] 
                [color={rgb, 255:red, 0; green, 0; blue, 0 } ]
                [line width=0.75]   
                (10.93,-3.29) .. controls (6.95,-1.4) and 
                (3.31,-0.3) .. (0,0) .. controls (3.31,0.3) and 
                (6.95,1.4) .. (10.93,3.29)   ;
          \draw (466.12,134) -- (466.36,159) ;
          \draw [shift={(466.38,161)}, rotate = 269.44] 
                [color={rgb, 255:red, 0; green, 0; blue, 0 } ]
                [line width=0.75]    
                (10.93,-3.29) .. controls (6.95,-1.4) and 
                (3.31,-0.3) .. (0,0) .. controls (3.31,0.3) and 
                (6.95,1.4) .. (10.93,3.29)   ;
        \end{tikzpicture}
      }
      \caption{Relation between
        problems~\ref{eqn:binary_optimization} 
        and~\ref{eqn:stochastic_optimization} 
        and their respective domains and codomains.
      }
      \label{fig:two_problems}
    \end{figure}
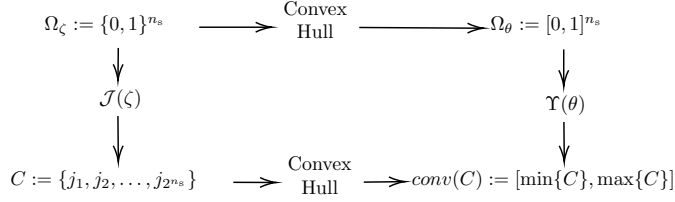

    \begin{lemma}\label{lemma:binary_stochastic_solutions}
      The optimal solutions of the two 
      problems~\eqref{eqn:binary_optimization} 
      and~\eqref{eqn:stochastic_optimization} are such that
      \[
          \argmin\limits_{\design\in\Omega_\design}{\obj(\design)} 
          \subseteq
          \argmin\limits_{\hyperparam\in\Omega_\hyperparam}{\stochobj(\hyperparam)}
        \,.
      \]
      Moreover, if the optimal solution $\design\opt$
      of~\eqref{eqn:binary_optimization} is unique, then
      $\hyperparam\opt=\design\opt$, where $\hyperparam\opt$ is the unique
      optimal solution of~\eqref{eqn:stochastic_optimization}.
    \end{lemma}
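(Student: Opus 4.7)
The plan is to obtain both statements as direct consequences of Proposition~\ref{prop:binary_stochastic_connection}, using parts~(c) and~(d) for the inclusion and then combining parts~(b), (c), and~(d) with the structure of the Bernoulli PMF to conclude uniqueness.

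For the inclusion, let $\design^* \in \argmin_{\design\in\Omega_\design}\obj(\design)$, so that $\obj(\design^*)=\min\{C\}$. Since $\Omega_\design \subset \Omega_\hyperparam$, I can evaluate $\stochobj$ at $\design^*$, and by part~(c) of the proposition, $\stochobj(\design^*)=\obj(\design^*)=\min\{C\}$. Part~(d) then gives $\stochobj(\hyperparam)\geq\min\{C\}$ for every $\hyperparam\in\Omega_\hyperparam$, so $\design^*$ is indeed a minimizer of $\stochobj$ over $\Omega_\hyperparam$, proving the claimed set inclusion.

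For the uniqueness claim, suppose $\design^*$ is the unique minimizer of $\obj$, and let $\hyperparam^*$ be any minimizer of $\stochobj$. From the above and part~(d), $\stochobj(\hyperparam^*)=\min\{C\}=\obj(\design^*)$. By part~(b) of the proposition (together with the definition of $\stochobj$ as the expectation in~\eqref{eqn:stochastic_optimization}), $\stochobj(\hyperparam^*)$ is a convex combination
\[
\stochobj(\hyperparam^*)=\sum_{k=1}^{2^{\Nsens}} \CondProb{\design[k]}{\hyperparam^*}\,\obj(\design[k])
\]
of values in $C$ with nonnegative coefficients summing to one. A convex combination of real numbers can equal the minimum of those numbers only if all the weight is concentrated on the arguments that attain the minimum; by uniqueness of $\design^*$, this forces $\CondProb{\design^*}{\hyperparam^*}=1$ and $\CondProb{\design[k]}{\hyperparam^*}=0$ for every $\design[k]\neq \design^*$.

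The final step is to read off from the product form of the joint Bernoulli PMF~\eqref{eqn:joint_Bernoulli_pmf_prod} that $\CondProb{\design^*}{\hyperparam^*}=\prod_{i=1}^{\Nsens}(\hyperparam_i^*)^{\design_i^*}(1-\hyperparam_i^*)^{1-\design_i^*}=1$ requires each factor to equal one; hence $\hyperparam_i^*=1$ when $\design_i^*=1$ and $\hyperparam_i^*=0$ when $\design_i^*=0$, that is, $\hyperparam^*=\design^*$. This simultaneously shows that the stochastic minimizer is unique and coincides with $\design^*$. The only step that needs any care is the convex-combination argument, but since the support of the Bernoulli distribution is finite and $C$ is finite, this reduces to an elementary statement about when a finite convex combination attains the minimum of its supporting set; I do not anticipate any serious obstacle.
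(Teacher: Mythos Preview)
Your proposal is correct and follows essentially the same route as the paper: both the inclusion and the uniqueness are derived from Proposition~\ref{prop:binary_stochastic_connection}, with the key observation that $\stochobj(\hyperparam)$ is a convex combination of the values $\obj(\design[k])$ and can equal $\min\{C\}$ only when all probability mass sits on minimizers of $\obj$. The only cosmetic difference is that the paper organizes the uniqueness step as a case split (degenerate versus nondegenerate $\hyperparam$) and argues by contradiction, whereas you go directly through the convex-combination argument and then read off $\hyperparam^*=\design^*$ from the product PMF; your version is, if anything, slightly more explicit.
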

    \begin{proof}
      \Cref{prop:binary_stochastic_connection} guarantees that
      $\nexists \hyperparam \in\Omega_\hyperparam ,\,
        \stochobj(\hyperparam) < \obj(\design\opt)$. 
      Additionally, from points a) and c)
      in~\Cref{prop:binary_stochastic_connection}, it follows that
      $
        \argmin\limits_{\design\in\Omega_\design}{\obj(\design)} 
        \subseteq
        \argmin\limits_{\hyperparam\in\Omega_\hyperparam}{\stochobj(\hyperparam)}
        \,
      $.
      
      Assume $\design\opt$ is the unique optimal solution
      of~\eqref{eqn:binary_optimization};
      it follows that $\hyperparam\opt=\design\opt$ is an optimal solution
      of~\eqref{eqn:stochastic_optimization}.
      Now assume $\exists \hyperparam\in\Omega_\hyperparam$
      such that $\hyperparam \neq \hyperparam\opt ,\,
        \stochobj(\hyperparam)
        =\stochobj(\hyperparam\opt)
        =\obj(\design\opt)
        $. 
      If $\hyperparam$ is the parameter of a degenerate Bernoulli distribution,
      then $\hyperparam\in\Omega_\design$ and $\obj(\hyperparam)=\obj(\design\opt)$,  
      thus contradicting  the uniqueness of $\design\opt$.
      Conversely, if $\hyperparam$ is the parameter of a nondegenerate
      multivariate Bernoulli distribution, 
      then there are at least two designs $\design, \vec{\eta}\in\Omega_\design$  
      with nontrivial probabilities $\in(0, 1)$, with
      $\obj(\design)=\design(\vec{\eta})$,  again contradicting  the
      uniqueness assumption of $\design\opt$.
    \end{proof}

    %

    \commentout{  
      The connection between the two problems defined 
      by~\ref{eqn:binary_optimization} 
      and~\ref{eqn:stochastic_optimization}), 
      and their respective solutions, is summarized
      in~\Cref{lemma:binary_stochastic_connection_1},
      and~\Cref{lemma:binary_stochastic_connection_2}.
      \begin{lemma}\label{lemma:binary_stochastic_connection_1}
        Let $\design \in\Omega_\design:=\{0,1\}^{\Nsens}$  be a random vector following
        the multivariate Bernoulli
        distribution~\eqref{eqn:joint_Bernoulli_pmf_prod}, 
        with parameter $\hyperparam \in \Omega_\hyperparam:=[0,1]^{\Nsens}$,
        and let $\obj:\Omega_\hyperparam\!\rightarrow\!\Rnum$ be either convex or
        concave over its domain. 
        The solutions of the two optimization problems defined by 
        \eqref{eqn:stochastic_optimization} and \eqref{eqn:binary_optimization}
        coincide; that is,
        \begin{equation*}
          \hyperparam\opt = \argmin_{\hyperparam \in [0, 1]^{\Nsens} }  
            \Expect{\design\sim\CondProb{\design}{\hyperparam}}{\obj(\design)} 
          \equiv 
          \argmin_{\design \in \{0, 1\}^{\Nsens}} \obj(\design) 
          = \design\opt \,.
        \end{equation*}
      \end{lemma}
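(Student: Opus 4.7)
The plan is to build directly on Proposition~\ref{prop:binary_stochastic_connection}, which already contains all the structural ingredients I need: the fact that $\Omega_\hyperparam$ is the convex hull of $\Omega_\design$, that $\stochobj$ agrees with $\obj$ at every vertex $\vec{x}\in\Omega_\design$, and that the minimum value attained by $\stochobj$ on $\Omega_\hyperparam$ equals $\min C = \obj(\design\opt)$. With those three ingredients, the inclusion of argmin sets becomes almost immediate, and uniqueness reduces to a careful case split on whether a second minimizer lives on a vertex or in the interior of a face.

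For the inclusion, I would pick an arbitrary $\design\opt\in\argmin_{\design\in\Omega_\design}\obj(\design)$. Part (c) of the proposition gives $\stochobj(\design\opt)=\obj(\design\opt)=\min C$, and part (d) identifies this common value with $\min_{\hyperparam\in\Omega_\hyperparam}\stochobj(\hyperparam)$. Hence $\design\opt$ lies in $\argmin_{\hyperparam\in\Omega_\hyperparam}\stochobj(\hyperparam)$, and since $\design\opt$ was arbitrary, the claimed set inclusion follows.

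For the uniqueness statement, I would assume $\design\opt$ is the unique minimizer of $\obj$ on $\Omega_\design$ and suppose, for contradiction, that some $\hyperparam^\dagger\in\Omega_\hyperparam$ with $\hyperparam^\dagger\neq\design\opt$ also satisfies $\stochobj(\hyperparam^\dagger)=\min C$. I split into two cases. If $\hyperparam^\dagger\in\Omega_\design$ (i.e., every coordinate is $0$ or $1$), part (c) of the proposition gives $\obj(\hyperparam^\dagger)=\stochobj(\hyperparam^\dagger)=\min C$, producing a second minimizer of $\obj$ and contradicting uniqueness. Otherwise, at least one coordinate $\hyperparam^\dagger_j$ lies strictly in $(0,1)$, so the product-Bernoulli PMF~\eqref{eqn:joint_Bernoulli_pmf_prod} assigns strictly positive probability to two designs that differ only in the $j$th coordinate; writing $\stochobj(\hyperparam^\dagger)=\sum_{\design}p_\design\,\obj(\design)$ with $p_\design\geq 0$, $\sum p_\design=1$, and every $\obj(\design)\geq\min C$, the identity $\sum p_\design(\obj(\design)-\min C)=0$ forces $\obj(\design)=\min C$ for every $\design$ in the support, yielding at least two distinct minimizers of $\obj$ and again contradicting uniqueness.

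I expect the main obstacle to be the nondegenerate case of the uniqueness argument: one needs to observe carefully that a single fractional coordinate already forces the support of $\CondProb{\design}{\hyperparam^\dagger}$ to contain two distinct designs, and then exploit the fact that a convex combination of values bounded below by $\min C$ can equal $\min C$ only if every positively weighted value attains that bound. Both facts are elementary, but they have to be spelled out to close the argument rigorously; the remainder of the lemma is a direct consequence of the proposition.
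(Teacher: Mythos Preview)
Your argument is internally sound, but it does not prove the lemma as stated: you never invoke the convexity/concavity hypothesis on $\obj$, and what you actually establish is (i) the inclusion $\argmin_{\design\in\Omega_\design}\obj(\design)\subseteq\argmin_{\hyperparam\in\Omega_\hyperparam}\stochobj(\hyperparam)$ together with (ii) uniqueness of $\hyperparam\opt$ under the \emph{additional} assumption that $\design\opt$ is unique. That is exactly the content of \Cref{lemma:binary_stochastic_solutions}, and your write-up mirrors the paper's proof of that lemma almost step for step.

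The paper's own proof of the present lemma takes a genuinely different route: Jensen's inequality gives $\stochobj(\hyperparam)=\Expect{}{\obj(\design)}\ge\obj\bigl(\Expect{}{\design}\bigr)=\obj(\hyperparam)$ in the convex case (with the inequality reversed in the concave case), equality holding at the vertices $\hyperparam\in\Omega_\design$, and the coincidence of optima is read off from this comparison between $\stochobj$ and the relaxed objective $\obj$ on all of $\Omega_\hyperparam$. Your proposition-based approach is in a sense more general---it needs nothing about $\obj$ beyond its vertex values---but precisely for that reason it cannot deliver the reverse inclusion without the extra uniqueness hypothesis: whenever $\obj$ has two distinct binary minimizers, any Bernoulli parameter supported on them also minimizes $\stochobj$, so set equality of the argmins genuinely fails. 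If you intend to prove the lemma as written, the convexity/concavity assumption has to enter somewhere.
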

      \begin{proof}
        The proof follows immediately from Jensen's Inequality 
        and by noting that if $\obj$ is convex, then 
        $\Expect{\design\sim \CondProb{\design}{\hyperparam}}{\obj(\design)}
          \geq \obj(\Expect{\design\sim \CondProb{\design}{\hyperparam}}{\design}) \, 
          \forall \design\in\Omega_\design\,.$
        Conversely, if $\obj$ is concave, then 
        $\Expect{\design\sim \CondProb{\design}{\hyperparam}}{\obj(\design)}
          \leq \obj(\Expect{\design\sim \CondProb{\design}{\hyperparam}}{\design}) \, \forall
          \design\in\Omega_\design\,.$
        Since $\design$ follows a multivariate Bernoulli distribution with
        parameter $\hyperparam\in[0, 1]^{\Nsens}$, then the
        probability distribution of $\design$ becomes degenerate at the extremal points 
        of the domain of $\hyperparam$ because $\CondProb{\design}{\hyperparam}$
        in~\eqref{eqn:joint_Bernoulli_pmf_prod} evaluates to $\{0,1\}$ 
        for $\hyperparam\in\{0,1\}^\Nsens$. 
        It then follows that 
        $\Expect{\design\sim \CondProb{\design}{\hyperparam}}{\obj(\design)}
          = \obj(\Expect{\design\sim \CondProb{\design}{\hyperparam}}{\design})
        $ at the extremal points of the domain of $\hyperparam$. 
        This shows that the optima of the two problems
        (\Cref{eqn:binary_optimization} and \Cref{eqn:stochastic_optimization})
        coincide.
      \end{proof}
      
      Note that~\Cref{lemma:binary_stochastic_connection_1} is not limited to the
      case where $\obj$ is either convex or concave over $\Omega_\hyperparam$ 
      since our interest is only in the value of $\obj$ evaluated at binary designs, 
      that is, at $\design\in\Omega_\design$. 
      This fact is stated by~\Cref{lemma:binary_stochastic_connection_2}.
      \begin{lemma}\label{lemma:binary_stochastic_connection_2}
        Let $\design$ be defined as
        in~\Cref{lemma:binary_stochastic_connection_2}.
        Then for any function $\obj:\Omega_\design\!\rightarrow\!\Rnum$,
        the solutions of the two optimization problems defined by 
        \eqref{eqn:stochastic_optimization} and \eqref{eqn:binary_optimization},
        respectively, coincide. 
      \end{lemma}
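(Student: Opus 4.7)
The plan is to split the argument along the two assertions of the lemma and to use the four claims of~\Cref{prop:binary_stochastic_connection} directly: the inclusion reduces to a one-line comparison of function values, while the conditional uniqueness requires an additional strictness step on the convex combination defining $\stochobj$.

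For the inclusion $\argmin_{\design\in\Omega_\design}\obj(\design) \subseteq \argmin_{\hyperparam\in\Omega_\hyperparam}\stochobj(\hyperparam)$, I would fix any minimizer $\design\opt$ of $\obj$ over $\Omega_\design$ and view it as an element of $\Omega_\hyperparam$ via the embedding established by part (a) of~\Cref{prop:binary_stochastic_connection}. Part (c) of the same proposition gives $\stochobj(\design\opt)=\obj(\design\opt)=\min\{C\}$, and part (d) gives $\stochobj(\hyperparam)\geq\min\{C\}$ for every $\hyperparam\in\Omega_\hyperparam$. Combining these two facts yields $\design\opt\in\argmin_{\hyperparam}\stochobj$, which is exactly the claimed inclusion.

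For the uniqueness assertion, I would assume $\design\opt$ is the unique binary minimizer and prove that $\design\opt$ is the unique stochastic minimizer. Since the first part already places $\design\opt$ in $\argmin_\hyperparam\stochobj$, it suffices to rule out every competing minimizer. Let $\tilde\hyperparam\in\argmin_\hyperparam\stochobj$ with $\tilde\hyperparam\neq\design\opt$ and expand $\stochobj(\tilde\hyperparam)=\sum_{k=1}^{2^{\Nsens}}\obj(\design[k])\,\CondProb{\design[k]}{\tilde\hyperparam}$. If $\tilde\hyperparam\in\Omega_\design$, then $\CondProb{\cdot}{\tilde\hyperparam}$ is the Dirac mass at $\tilde\hyperparam$ and part (c) gives $\obj(\tilde\hyperparam)=\stochobj(\tilde\hyperparam)=\obj(\design\opt)=\min\{C\}$, contradicting uniqueness of $\design\opt$ over $\Omega_\design$. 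Otherwise some coordinate satisfies $\tilde\hyperparam_j\in(0,1)$, and the product form of the joint Bernoulli PMF in~\eqref{eqn:joint_Bernoulli_pmf_prod} assigns strictly positive mass to at least two distinct binary designs that differ in the $j$th coordinate. At least one of these two supported designs must differ from $\design\opt$ and hence attain an objective value strictly greater than $\min\{C\}$ by the uniqueness hypothesis; this forces $\stochobj(\tilde\hyperparam)>\min\{C\}=\stochobj(\design\opt)$, a contradiction. Hence $\hyperparam\opt=\design\opt$ is the unique stochastic minimizer.

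The main obstacle is the strictness step in the non-degenerate case. I need to argue carefully that any coordinate $\tilde\hyperparam_j\in(0,1)$ forces the support of $\CondProb{\cdot}{\tilde\hyperparam}$ to contain two distinct binary vectors differing in the $j$th entry, each with strictly positive probability, and that at least one of them necessarily differs from $\design\opt$ and therefore lifts the objective strictly above $\min\{C\}$. Unpacking the factorized PMF supplies the needed bookkeeping, but some care is required to avoid a circular appeal to general convexity arguments and to handle the edge case in which $\design\opt$ is obtained from $\tilde\hyperparam$ by toggling precisely the single fractional coordinate.
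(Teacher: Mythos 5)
Your proposal is correct and follows essentially the same route as the paper: the inclusion is obtained from parts (a), (c), and (d) of \Cref{prop:binary_stochastic_connection}, and uniqueness is handled by a case split between degenerate and nondegenerate $\CondProb{\design}{\hyperparam}$. Your treatment of the nondegenerate case is in fact slightly more careful than the paper's — you derive the strict inequality $\stochobj(\tilde{\hyperparam})>\min\{C\}$ explicitly from the positive mass on a design differing from $\design\opt$, whereas the paper asserts somewhat tersely that two supported designs with equal objective values contradict uniqueness — and the ``edge case'' you worry about (a single fractional coordinate) causes no difficulty, since the design obtained by toggling that coordinate still differs from $\design\opt$ and carries positive probability.
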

      \begin{proof}
        Since the expected value is a measure of central tendency, then 
        for the optimal binary design $\design\opt \in\Omega_\design$ it holds that
        $\obj(\design\opt)\leq\obj(\hyperparam)\,\forall\hyperparam\in(0,1)^{\Nsens}$.
        This, together with the statement of~\Cref{lemma:binary_stochastic_connection_2}, 
        means that the optimal solution for both problems defined by
        \eqref{eqn:binary_optimization}and ~\eqref{eqn:stochastic_optimization},
        respectively, is attained only at extremal points of the domain of $\hyperparam$.
        Moreover, if we assume the value of $\obj$ is unique for all instances of
        $\design$, then there is a unique global optimum for the two problems.
      \end{proof}
        
      Further discussion of the connection of the two problems in the context of a simple 
      toy model is provided in the numerical experiments
      in~\Cref{sec:numerical_experiments}.
    }

    The stochastic formalism proposed
    in~\Cref{subsec:stochastic_OED} is beneficial for multiple reasons.
    First, this probabilistic formulation~\eqref{eqn:stochastic_optimization} 
    to solving the original optimization problem~\eqref{eqn:binary_optimization} 
    enables converting a binary design domain into a bounded continuous domain, 
    where the optimal solution of the two problems \eqref{eqn:stochastic_optimization}
    and~\eqref{eqn:binary_optimization} coincide.
    Second, the stochastic formulation~\eqref{eqn:stochastic_optimization} 
    enables utilizing efficient stochastic optimization
    algorithms to solve binary optimization problems, 
    which are not applicable for the deterministic binary optimization
    problem~\eqref{eqn:binary_optimization}.
    Third, the solution of the stochastic
    problem~\eqref{eqn:stochastic_optimization} is an optimal parameter
    $\hyperparam\opt$ that can be used for sampling binary designs
    $\design$ by sampling $\CondProb{\design}{\hyperparam\opt}$ 
    , even if only a suboptimal solution is found.
    Fourth, the stochastic formulation~\eqref{eqn:stochastic_optimization} 
    requires evaluating the derivative of $\stochobj$ with
    respect to the parameter of the probability distribution $\hyperparam$,
    instead of the design $\design$. 
    Thus,  we do not need to evaluate the
    derivative of $\obj$ with respect to the design $\design$.
    In OED for Bayesian inversion, as discussed in~\Cref{sec:background}, 
    evaluating the gradient (with respect to the design) is very expensive 
    since it requires many forward and backward solves of expensive
    simulation models.
  Fifth, because $\obj$ is not required to be differentiable with respect to
    the design $\design$, a nonsmooth penalty function $\Phi$ can be utilized in
    defining $\obj$, for example to enforce sparsity or budget constraints, 
    with the need to approximate this penalty with a smoother penalty approximation.
    Note that in this case, as explained by
    (\ref{eqn:binary_optimization} and \ref{eqn:stochastic_optimization}),
    the penalty is asserted on the design, rather than
    the distribution parameter $\hyperparam$.

  \subsection{Approximately solving the stochastic OED optimization problem}
    \label{subsec:stochastic_OED_solution}
    The objective in~\eqref{eqn:stochastic_optimization_expanded} and the
    gradient~\eqref{eqn:exact_gradient} amount to evaluating a large finite sum,
    which can be approximated by sampling.
    The simplest approach for approximately solving ~\eqref{eqn:stochastic_optimization_expanded} 
    is to follow a Monte Carlo (MC) sample-based approximation to the 
    objective $\stochobj$ and the gradient $\nabla_{\hyperparam}\stochobj$, 
    where the sample is drawn from an appropriate probability distribution.
    However, the objective~\eqref{eqn:stochastic_optimization} is 
    deterministic with respect to $\hyperparam$. 
    Nevertheless, one can add randomness by assuming a uniform random variable 
    (an index) over all terms of the objective and the associated gradient.
    This allows utilizing external and internal sampling-based stochastic 
    optimization algorithms; see, for
    example,~\cite{mak1999monte,kleywegt2002sample,dupacova1988asymptotic,king1993asymptotic,shapiro1991asymptotic,defazio2014saga,reddi2016fast,schmidt2017minimizing}. 
    These algorithms, however, are beyond the scope of this work.

    Here, instead, we focus on efficient algorithmic procedures 
    inspired by recent developments in reinforcement 
    learning~\cite{BertsekasTsitsiklis96,williams1992simple,sutton2000policy}. 
    Specifically, we view $\CondProb{\design}{\hyperparam}$ as a policy and 
    the design $\design$ as a state, and we seek an optimal policy to configure 
    the observational grid. 
    The reward is prescribed by the value of the original objective function, 
    and we seek the optimal design that minimizes the total expected reward.
    This is further explained below.

    An alternative form of the gradient~\eqref{eqn:exact_gradient} can be
    obtained by using the \textit{``kernel trick''}, which utilizes the fact that
    $\nabla_{\hyperparam}{\log(\CondProb{\design}{\hyperparam})} = \frac{1}{\CondProb{\design}{\hyperparam}} \nabla_{\hyperparam}{\CondProb{\design}{\hyperparam}}
    $, and thus 
    $\nabla_{\hyperparam}{\CondProb{\design}{\hyperparam}} ={\CondProb{\design}{\hyperparam}} \, \nabla_{\hyperparam}{\log(\CondProb{\design}{\hyperparam})} $.
    Using this identity, we can rewrite the gradient~\eqref{eqn:exact_gradient}
     as
    \begin{equation}\label{eqn:kernel_exact_gradient}
      \begin{aligned}
        \nabla_{\hyperparam}\, \stochobj(\hyperparam)
          = \sum_{k=1}^{2^{\Nsens}} 
          \Bigl(
              \obj(\design[k])\,
              \nabla_{\hyperparam}\, \log{\CondProb{\design[k]}{\hyperparam} }
              \Bigr)
            \CondProb{\design[k]}{\hyperparam}
          %
          =\Expect{\design\sim \CondProb{\design}{\hyperparam}}{ \obj(\design)\,
            \nabla_{\hyperparam}\, \log{\CondProb{\design}{\hyperparam} } }  \,.
      \end{aligned}
    \end{equation}

    We assume, without loss of generality, that $\hyperparam$ falls inside the
    open ball $(0, 1)^{\Nsens}$, and thus both the logarithm and the associated
    derivative of the log-probability are well defined.
    If any of the components of $\hyperparam$ attain their bound, then the
    distribution becomes degenerate in this direction, and the probability is
    set to either $0$ or $1$ and is thus taken out of the formulation since the
    gradient in that direction is set to $0$.
    This is equivalent to projecting $\hyperparam$ onto a lower-dimensional
    subspace. 

    The form of the gradient described by~\eqref{eqn:kernel_exact_gradient} is
    equivalent to~\eqref{eqn:exact_gradient}. However, it shows that the gradient
    can be written as an expectation of gradients.
    This enables us to approximate the gradient using MC sampling
    by following a stochastic optimization approach.
    %
    Specifically. given a sample $\design[j] \sim \CondProb{\design}{\hyperparam},\,
    j=1,2,\ldots,\Nens$, we can use the following MC approximation of the gradient,
    \begin{equation}\label{eqn:kernel_stochastic_gradient}
      \vec{g}:=
      \nabla_{\hyperparam}\, \Expect{\CondProb{\design}{\hyperparam}}{\obj(\design)}
        \approx
        \widehat{\vec{g}} :=
        \frac{1}{\Nens} \sum_{j=1}^{\Nens}
          \obj(\design[j]) \nabla_{\hyperparam} \,
          \log{ \CondProb{\design[j]}{\hyperparam } }  \,,
    \end{equation}
    where $\nabla_{\hyperparam} \log{ \CondProb{\design[j]}{\hyperparam }}$ is
    the score function of the multivariate Bernoulli distribution
    (see~\Cref{app:Bernoulli} for details).
    By combining \eqref{eqn:kernel_stochastic_gradient} with~\eqref{eqn:grad_log_Bernoulli},
    we obtain the following form of the stochastic gradient:
    \begin{equation}\label{eqn:kernel_stochastic_gradient_Bernoulli}
        \widehat{\vec{g}}
        = \frac{1}{\Nens} \sum_{j=1}^{\Nens}
            \obj(\design[j])
          \sum_{i=1}^{\Nsens} \left(
          \frac{\design_i[j]}{\hyperparam_i} + \frac{\design_i[j]-1}{1-\hyperparam_i}
        \right) \,\vec{e}_i \,.
    \end{equation}

    To solve the optimization problem~\eqref{eqn:stochastic_optimization}, one
    can start with an initial policy, in other words, an initial set of parameters
    $\hyperparam$, and iteratively follow an approximate descent direction until an
    approximately \textit{optimal policy} is obtained.
    
    Because the optimization problems described here require box constraints, we
    introduce the projection operator $\proj{}{}$, which maps an arbitrary point
    $\hyperparam$ onto the feasible region described by the box constraints.
    One can apply projection by truncation (see~\cite[Section 16.7]{nocedal2006numerical}):
    \begin{equation}\label{eqn:box_projection}
      \Proj{}{\hyperparam_i} := \min\{1, \max\{0,\hyperparam_i\} \}
      \equiv
      \begin{cases}
        0 \quad&\text{ if}\quad \hyperparam_i < 0 \,,  \\
        \hyperparam_i \quad&\text{ if}\quad \hyperparam_i \in [0, 1] \,,  \\
        1 \quad&\text{ if}\quad \hyperparam_i > 1 \,,
      \end{cases}
    \end{equation}
    where the projection $\Proj{}{\hyperparam}$ is applied componentwise to the parameters vector
    $\hyperparam$.
    Alternatively, the following metric projection operator can  be utilized:
    \begin{equation}\label{eqn:metric_projection}
      \Proj{}{\hyperparam} := \argmin_{\hyperparam^{\prime}\in[0,1]^{\Nsens} }
        \wnorm{\hyperparam^{\prime}-\hyperparam}{2} \,.
    \end{equation}

    Note that the projection operator $\proj$  is
    nonexpansive and thus is orthogonal. This guarantees that for any
    $\Proj{}{\hyperparam[1]},\,\Proj{}{\hyperparam[2]}\in\Rnum^{\Nsens} $, it
    follows that
    \begin{equation}\label{eqn:projected_bound}
      \norm{ \Proj{}{\hyperparam[1]} - \Proj{}{\hyperparam[2]} }
        \leq \norm{\hyperparam[1] - \hyperparam[2] } \,.
    \end{equation}

    A stochastic steepest-descent step to solve~\eqref{eqn:stochastic_optimization} 
    is approximated by the stochastic approximation of the
    gradient~\eqref{eqn:kernel_stochastic_gradient_Bernoulli}
    and is described as
    \begin{equation}\label{eqn:stoch_steep_step}
      \hyperparam^{(n+1)}
          = \Proj{}{\hyperparam^{(n)} - \eta^{(n)} \widehat{\vec{g}}^{(n)} } \,,
    \end{equation}
    where $\eta^{(n)}$ is the step size (learning rate) at the $n$th iteration.
    \Cref{eqn:stoch_steep_step} describes a stochastic gradient descent approach, 
    using the stochastic approximation~\eqref{eqn:kernel_stochastic_gradient_Bernoulli} 
    at each iteration in place of $g^{(n)}$,  where the sample is generated
    from $\CondProb{\design}{\hyperparam^{(n)}}$.
    One can choose a fixed step size or follow a decreasing schedule to
    guarantee convergence or can do a line search using the sampled design to
    approximate the objective function.
    Additionally, one can use approximate second-order information and create a
    quasi-Newton-like step.
    
    \autoref{alg:REINFORCE} describes a stochastic descent approach 
    for solving~\eqref{eqn:binary_optimization} by
    solving~\eqref{eqn:stochastic_optimization} followed by a sampling step. 
    Note that because $\design$ is sampled from a multivariate Bernoulli
    distribution with parameter $\hyperparam$,
    in~\autoref{algstep:REINFORCE:grad} 
    if $\hyperparam_i\in\{0, 1\}$, then $\design_i=\hyperparam_i$. Thus the
    corresponding term in the summation vanishes.

    \begin{algorithm}
      \caption{Stochastic optimization algorithm for binary OED.}
      \label{alg:REINFORCE}
      \begin{algorithmic}[1]
        \Require{Initial distribution parameter $\hyperparam^{(0)}$,
                  step size schedule $\eta^{(n)}$, 
                  $\Nens,\, m$}
        \Ensure{$\design\opt$}

        \State{Initialize $n = 0$}

        \While{Not Converged}
          \State {
            Update $n\leftarrow n+1$ 
          }
          
          \State{
            Sample $\{\design[i]; i=1,2,\ldots,\Nens \}\sim
              \CondProb{\design}{\hyperparam^{(n)}}$ 
          }
        
          \State{
            Calculate $ \widehat{g}^{(n)}=\frac{1}{\Nens} \sum_{j=1}^{\Nens}
              \left(\obj(\design[j])\right) \sum_{i=1}^{\Nsens} 
              \left( 
                \frac{\design_i[j]}{\hyperparam_i} 
                  + \frac{\design[j]_i-1}{1-\hyperparam_i}
              \right) \,\vec{e}_i $
          }\label{algstep:REINFORCE:grad}
        
          \State {
            Update $\hyperparam^{(n+1)} 
              = \Proj{}{\hyperparam^{(n)} - \eta^{(n)} \widehat{g}^{(n)} } $
          }\label{algstep:REINFORCE:proj}
          
        \EndWhile
        
        \State{Set 
          $\hyperparam\opt = \hyperparam^{(n)}$
        }

        \State\label{algstep:REINFORCE:sampling}{Sample 
          $\{\design[j];j=1,2,\ldots,m \} \sim 
            \CondProb{\design}{\hyperparam\opt}$,
          and calculate $\obj(\design[j])$
        }
   
        \State\Return{
            $\design\opt $: the design $\design$ with smallest 
              value of $\obj$ in the sample.
        }

      \end{algorithmic}
    \end{algorithm}
    
    \Cref{alg:REINFORCE} is a stochastic gradient descent method 
    with convergence guarantees in expectation only;
    see~\Cref{subsec:convergence_analysis}.
    In practice, the convergence test 
    could be replaced with a maximum number of iterations. Doing so,
    however, might not result in degenerate PMF.
    The output of the algorithm $\hyperparam\opt=\hyperparam^{(a)}$
    is used for sampling binary designs $\design$, by sampling
    $\CondProb{\design}{\hyperparam\opt}$, even if a suboptimal solution is found.
    Another potential convergence criterion is the magnitude of the projected gradient
    $\norm{ \Proj{}{g^{(k)} } } \leq \textrm{pgtol} $.
    We also note that~\Cref{alg:REINFORCE} is equivalent to the vanilla policy gradient
    REINFORCE algorithm~\cite{williams1992simple}.
    The remainder of~\Cref{sec:problem_formulation} is devoted 
    to addressing convergence guarantees, convergence analysis, 
    and improvements of~\Cref{alg:REINFORCE}.

  \subsection{Convergence analysis}
    \label{subsec:convergence_analysis}
    Here, we show that 
    the optimal solution of~\eqref{eqn:stochastic_optimization_expanded} is 
    a degenerate multivariate Bernoulli distribution and that the 
    convergence of~\Cref{alg:REINFORCE} to such an optimal distribution in
    expectation is guaranteed under mild conditions. 
    
    First, we study the properties of the exact objective defined 
    by~\eqref{eqn:stochastic_optimization_expanded} and the associated
    exact gradient~\eqref{eqn:exact_gradient}. We then address the stochastic
    approximation and the performance of~\Cref{alg:REINFORCE}.

    \subsubsection{Analysis of the exact stochastic optimization problem}
      \label{subsubsec:convergence_analysis_deterministic}
      Recall that the objective function utilized
      in~\eqref{eqn:stochastic_optimization_expanded} and the associated gradient take 
      the respective forms
      \begin{equation}
        \stochobj(\hyperparam) 
          =\sum_{k=1}^{2^{\Nsens}} \obj(\design[k])
          \CondProb{\design[k]}{\hyperparam} 
          \,, \quad
        \nabla_{\hyperparam}\,\stochobj(\hyperparam) \equiv \vec{g}
          =\sum_{k=1}^{2^{\Nsens}} \obj(\design[k])
            \nabla_{\hyperparam}\, \CondProb{\design[k]}{\hyperparam} \,,
      \end{equation}
      where $\hyperparam \in \Omega_\hyperparam:=[0, 1]^{\Nsens}$, 
      $\CondProb{\design}{\hyperparam}$ is the multivariate Bernoulli
      distribution~\eqref{eqn:joint_Bernoulli_pmf_prod},
      and we use the indexing scheme~\eqref{eqn:binary_index}.
      Note that we are viewing the objective $\stochobj$ explicitly as a
      function of the PMF parameter $\hyperparam$, because all possible
      combinations of binary designs $\design[k];\, k=1,2,\ldots,2^{\Nsens}$ are
      present in the expectation. 
      
      Here, we show that the exact gradient of $\stochobj$ is bounded
      (\Cref{lemma:exact_gradient_bound}) and that the Hessian of $\stochobj$ 
      is bounded; hence, by following a steepest-descent approach, a locally optimal 
      design is obtained. 
      This will set the ground for an analysis of~\Cref{alg:REINFORCE}.

      \begin{lemma}\label{lemma:exact_gradient_bound}
        Let $\design \in \Omega_{\design} = \{0, 1\}^{\Nsens}$, and let 
        $ C = \max\limits_{\design\in \Omega_{\design}}{\{ \left|\obj(\design)\right| \}}$.
        Then the following bounds of the gradient of the stochastic
        objective $\stochobj$ hold,
        \begin{subequations}\label{eqn:exact_gradient_bounds}
        \begin{eqnarray} 
            \norm{ \nabla_{\hyperparam}\,\stochobj(\hyperparam) } 
            &\leq &
            \sqrt{\Nsens \left| \Omega_{\design} \right| } \, C 
            \,, \quad \hyperparam \in \Omega_\hyperparam \,,
              \label{eqn:exact_gradient_norm_bound}  \\
            \norm{ 
              \nabla_{\hyperparam}\,\stochobj(\hyperparam[1]) - 
              \nabla_{\hyperparam}\,\stochobj(\hyperparam[2]) 
            } &\leq& 2 \sqrt{ \Nsens \left| \Omega_{\design} \right| } \, C  
            \,, \quad \hyperparam[1], \hyperparam[2] \in \Omega_\hyperparam \,,
              \label{eqn:exact_gradient_Lipschitz}
        \end{eqnarray}
        \end{subequations}
        where $\left| \Omega_{\design} \right| = 2^{\Nsens}$ is the cardinality
        of the design domain.
        Moreover, the Hessian is bounded  by
        \begin{equation}\label{eqn:Hessian_entries_bound}
          \abs{ \delll{\stochobj}{\hyperparam_i}{\hyperparam_j} } 
            \leq \sqrt{\left| \Omega_{\design} \right|} \, C  \,.
        \end{equation}
      \end{lemma}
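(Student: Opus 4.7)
The plan is to prove all three bounds by direct manipulation of the finite‑sum representations of the gradient~\eqref{eqn:exact_gradient} and the explicit derivative of the Bernoulli PMF~\eqref{eqn:Bernoulli_PMF_gradient}. The key structural observation, reused throughout, is that each partial derivative $\del{\CondProb{\design[k]}{\hyperparam}}{\hyperparam_j}$ equals $(-1)^{1-\design_j[k]}$ times the marginal product $\prod_{i\neq j}\hyperparam_i^{\design_i[k]}(1-\hyperparam_i)^{1-\design_i[k]}$. Since each factor of this product lies in $[0,1]$, the partial is bounded by $1$ in absolute value, and moreover its squared sum over $k\in\{1,\dots,2^{\Nsens}\}$ is a convex combination of such products.

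For the gradient norm bound~\eqref{eqn:exact_gradient_norm_bound}, I first expand $\sqnorm{\nabla_{\hyperparam}\stochobj(\hyperparam)}{} =\sum_{j=1}^{\Nsens}\bigl(\sum_{k=1}^{2^{\Nsens}}\obj(\design[k])\,\del{\CondProb{\design[k]}{\hyperparam}}{\hyperparam_j}\bigr)^{2}$ and apply Cauchy--Schwarz coordinatewise, yielding $(\sum_{k}\obj(\design[k])\,\del{\CondProb{\design[k]}{\hyperparam}}{\hyperparam_j})^{2}\leq(\sum_{k}\obj(\design[k])^{2})(\sum_{k}(\del{\CondProb{\design[k]}{\hyperparam}}{\hyperparam_j})^{2})$. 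The first factor is bounded by $|\Omega_{\design}|C^{2}$ using $|\obj(\design[k])|\leq C$, and the second factor is bounded by $1$ by splitting the sum over the value of $\design_j[k]\in\{0,1\}$ and using that $\sum_{\design_{-j}}\prod_{i\neq j}\hyperparam_i^{\design_i}(1-\hyperparam_i)^{1-\design_i}=1$ (with the squared marginals dominated by the marginals themselves). Summing over $j=1,\dots,\Nsens$ then gives $\sqnorm{\nabla_{\hyperparam}\stochobj}{}\leq\Nsens\,|\Omega_{\design}|C^{2}$, which is the claimed bound. The Lipschitz‑type bound~\eqref{eqn:exact_gradient_Lipschitz} follows immediately from the triangle inequality $\norm{\nabla_{\hyperparam}\stochobj(\hyperparam[1])-\nabla_{\hyperparam}\stochobj(\hyperparam[2])}\leq\norm{\nabla_{\hyperparam}\stochobj(\hyperparam[1])}+\norm{\nabla_{\hyperparam}\stochobj(\hyperparam[2])}$ combined with~\eqref{eqn:exact_gradient_norm_bound} applied at each of $\hyperparam[1]$ and $\hyperparam[2]$.

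For the Hessian entry bound~\eqref{eqn:Hessian_entries_bound}, I would differentiate~\eqref{eqn:Bernoulli_PMF_gradient} once more. Since the joint PMF is affine in each coordinate $\hyperparam_j$ when the remaining coordinates are held fixed, the pure second derivatives $\dell{\CondProb{\design[k]}{\hyperparam}}{\hyperparam_j}$ vanish identically. For the mixed derivatives with $i\neq j$, one obtains $\delll{\CondProb{\design[k]}{\hyperparam}}{\hyperparam_i}{\hyperparam_j}=(-1)^{1-\design_i[k]}(-1)^{1-\design_j[k]}\prod_{l\neq i,j}\hyperparam_l^{\design_l[k]}(1-\hyperparam_l)^{1-\design_l[k]}$, again bounded by $1$. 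Applying Cauchy--Schwarz entrywise, $\bigl(\sum_{k}\obj(\design[k])\,\delll{\CondProb{\design[k]}{\hyperparam}}{\hyperparam_i}{\hyperparam_j}\bigr)^{2}\leq(\sum_{k}\obj(\design[k])^{2})(\sum_{k}(\delll{\CondProb{\design[k]}{\hyperparam}}{\hyperparam_i}{\hyperparam_j})^{2})$, and the same marginalization trick (now splitting over the four combinations of $(\design_i[k],\design_j[k])\in\{0,1\}^{2}$) bounds the second factor by $1$, delivering $|\delll{\stochobj}{\hyperparam_i}{\hyperparam_j}|\leq\sqrt{|\Omega_{\design}|}\,C$.

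The main obstacle is the bookkeeping in the Cauchy--Schwarz step: one must carefully exploit the product structure of the multivariate Bernoulli PMF so that the sums $\sum_{k}(\del{\CondProb{\design[k]}{\hyperparam}}{\hyperparam_j})^{2}$ and $\sum_{k}(\delll{\CondProb{\design[k]}{\hyperparam}}{\hyperparam_i}{\hyperparam_j})^{2}$ collapse, via marginalization over the complementary coordinates and the inequality $\sum p_{-j}^{2}\leq\max p_{-j}\cdot\sum p_{-j}\leq 1$, to the constant $1$ rather than to $|\Omega_{\design}|$. Everything else is routine application of triangle or Cauchy--Schwarz inequalities, and the identities collected in~\Cref{app:Bernoulli} should make the differentiation of the Bernoulli PMF entirely mechanical.
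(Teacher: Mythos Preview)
Your approach is correct in method and takes a somewhat different route from the paper's. The paper does not work coordinatewise: it first passes from $\sqnorm{\sum_k \obj(\design[k])\nabla_{\hyperparam}\CondProb{\design[k]}{\hyperparam}}$ to $\sum_k |\obj(\design[k])|^{2}\sqnorm{\nabla_{\hyperparam}\CondProb{\design[k]}{\hyperparam}}$, then invokes the vector bound $\norm{\nabla_{\hyperparam}\CondProb{\design}{\hyperparam}}\leq\sqrt{\Nsens}$ from \Cref{lemma:Bernoulli_PMF_gradient_norm_bound} to control each summand by $C^{2}\Nsens$, and finally sums $2^{\Nsens}$ terms. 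Your coordinatewise Cauchy--Schwarz plus explicit marginalization over the complementary coordinates is more self-contained: it does not rely on the auxiliary appendix lemma, and it makes transparent exactly how the product structure of the Bernoulli PMF is used. Both routes land on the same constants for~\eqref{eqn:exact_gradient_norm_bound}, and both derive~\eqref{eqn:exact_gradient_Lipschitz} identically from the triangle inequality.

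One bookkeeping slip to fix: your claim that $\sum_{k=1}^{2^{\Nsens}}\bigl(\del{\CondProb{\design[k]}{\hyperparam}}{\hyperparam_j}\bigr)^{2}\leq 1$ is off by a factor of $2$. The squared partial does not depend on $\design_j[k]$, so after you dominate the squared marginal over $\design_{-j}$ by the marginal itself and sum to $1$, the outer sum over $\design_j\in\{0,1\}$ still contributes a factor $2$, giving $\sum_k(\partial_j P)^2\leq 2$. The analogous count for the mixed Hessian entry gives $\leq 4$ rather than $\leq 1$ (four combinations of $(\design_i,\design_j)$). This costs only harmless constants ($\sqrt{2}$ in the gradient bound, $2$ in the Hessian bound) relative to the stated lemma; the argument itself is sound.
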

      \begin{proof}
        The first bound is obtained as follows: 
        \begin{equation}\label{eqn:exact_gradient_norm_bound_1}
          \begin{aligned}
            \sqnorm{ \nabla_{\hyperparam}\,\stochobj(\hyperparam) } 
            &= \sqnorm{ \sum_{k=1}^{2^{\Nsens}} \obj(\design[k])
              \nabla_{\hyperparam}\, \CondProb{\design[k]}{\hyperparam}
              } \\ 
            &\leq \sum_{k=1}^{2^{\Nsens}} 
             \sqnorm{ \obj(\design[k])
              \nabla_{\hyperparam}\, \CondProb{\design[k]}{\hyperparam} }  \\
            &= \sum_{k=1}^{2^{\Nsens}} 
             \left( \obj(\design[k]) \right)^2
              \sqnorm{ \nabla_{\hyperparam}\, \CondProb{\design[k]}{\hyperparam} } 
            \leq C^2 \sum_{k=1}^{2^{\Nsens}} 
              \sqnorm{ \nabla_{\hyperparam}\, \CondProb{\design[k]}{\hyperparam} }  \,.  
          \end{aligned}
        \end{equation}

        In~\Cref{app:Bernoulli}, we derive a bound on 
        $\nabla_{\hyperparam}\, \CondProb{\design[k]}{\hyperparam}$, 
        the gradient of log probability of the multivariate Bernoulli
        distribution~\eqref{eqn:joint_Bernoulli_pmf_prod}.
        Specifically,~\Cref{lemma:Bernoulli_PMF_gradient_norm_bound} shows
        that
        $
        \norm{ \nabla_{\hyperparam}\, \CondProb{\design}{\hyperparam} }
        \leq
          \sqrt{\Nsens} \,
            \max\limits_{j=1,\ldots,\Nsens}
              \, \min\limits_{\substack{k=1,\ldots,\Nsens\\k\neq j}}
            \CondProb{\design_k}{\hyperparam_k}
        $. Hence it follows from~\eqref{eqn:exact_gradient_norm_bound_1} that
        \begin{equation}
          \begin{aligned}
            \sqnorm{ \nabla_{\hyperparam}\,\stochobj(\hyperparam) } 
            %
            %
            &\leq \Nsens\, C^2 \sum_{i=1}^{2^{\Nsens}}
              \max\limits_{j=1,\ldots,\Nsens}
              \, \min\limits_{\substack{k=1,\ldots,\Nsens\\k\neq j}}
            \CondProb{\design_k[i]}{\hyperparam_k} \\
            &\leq \Nsens 2^{\Nsens} C^2 
              \max\limits_{i=1,\ldots,2^{\Nsens}}
              \max\limits_{j=1,\ldots,\Nsens}
              \min\limits_{\substack{k=1, \ldots,\Nsens\\k\neq j}}
            \CondProb{\design_k[i]}{\hyperparam_k} 
            \,.
          \end{aligned}
        \end{equation}

        Because $\CondProb{\design}{\hyperparam}\leq 1$, it follows immediately
        that 
        $ 
        \sqnorm{ \nabla_{\hyperparam}\,\stochobj(\hyperparam) } 
          \leq \Nsens\, 2^{\Nsens} \, C^2
        $, and the bound~\eqref{eqn:exact_gradient_norm_bound} follows by taking 
        the square root on both sides.
        The second bound~\eqref{eqn:exact_gradient_Lipschitz} follows
        immediately by noting that
        \begin{equation}
          \norm{ 
            \nabla_{\hyperparam}\,\stochobj(\hyperparam[1]) - 
            \nabla_{\hyperparam}\,\stochobj(\hyperparam[2]) 
          }
          \leq
          \norm{\nabla_{\hyperparam}\,\stochobj(\hyperparam[1]) }
          + \norm{\nabla_{\hyperparam}\,\stochobj(\hyperparam[2]) } 
          \leq
          2 \sqrt{ \Nsens \left| \Omega_{\design} \right| } C  \,.  
        \end{equation}

      The second-order derivative of the objective $\stochobj$ is
      \begin{equation}\label{eqn:eqn:stochastic_optimization_Hessian}
        \mathrm{Hess}_{\hyperparam} \stochobj 
          = \nabla_{\hyperparam}\nabla_{\hyperparam} \stochobj
          = \sum_{\design} \obj(\design)
             \nabla_{\hyperparam} \nabla_{\hyperparam}\, 
               \CondProb{\design}{\hyperparam}  
          = \sum_{i=k}^{2^{\Nsens}} \obj(\design[k])
             \nabla_{\hyperparam} \nabla_{\hyperparam}\, 
               \CondProb{\design[k]}{\hyperparam}  
         \,.
      \end{equation}

      \Cref{eqn:eqn:stochastic_optimization_Hessian} shows that the 
      $(i,j)$th entry of the Hessian~\eqref{eqn:eqn:stochastic_optimization_Hessian} 
      is
      $
      \sum_{k=1}^{2^{\Nsens}} \obj(\design[k])
        \delll{\CondProb{\design}{\hyperparam}}{\hyperparam_i}{\hyperparam_j}
      $. The second-order derivative of the Bernoulli distribution is discussed
      in detail in~\Cref{app:Bernoulli}, and it follows 
      from~\eqref{eqn:Bernoulli_PMF_Hessian} that
      $ \delll{\CondProb{\design}{\hyperparam}}{\hyperparam_i}{\hyperparam_j}
        \leq1$. 
      This means that the entries of the Hessian
      ~\eqref{eqn:eqn:stochastic_optimization_Hessian} are bounded as follows,
      \begin{equation}
        \left({ \delll{\stochobj}{\hyperparam_i}{\hyperparam_j} }\right)^2 
          = \left({ \sum_{i=k}^{2^{\Nsens}} \obj(\design[k])
              \delll{\CondProb{\design}{\hyperparam}}{\hyperparam_i}{\hyperparam_j}
            }\right)^2
          \leq
          C^2 \sum_{k=1}^{2^{\Nsens}}\left({
              \delll{\CondProb{\design[k]}{\hyperparam}}{\hyperparam_i}{\hyperparam_j}
            }\right)^2
          \leq C^2 2^{\Nsens} \,,
      \end{equation}
      and the bound in~\eqref{eqn:Hessian_entries_bound} follows by taking
      square root of both sides.
      $\qquad$  
      \end{proof}
      
      %
      
      \Cref{lemma:exact_gradient_bound} is especially interesting because 
      it shows that the gradient of $\stochobj(\hyperparam)$ is bounded. 
      On the contrary, $\obj(\design)$ does not have bounded gradients for
      $\ell_0$ regularization.
      Additionally, it follows from~\Cref{lemma:exact_gradient_bound} that 
      the entries of the Hessian are bounded,  implying that 
      the Hessian is bounded. Thus the objective $\stochobj$ is Lipschitz smooth.
     Finding the Lipschitz constant, however, depends on the value of the
      function $\obj$ evaluated at all possible values of $\design$, or at least
      the maximum value.
      This is impossible to know a priori, without any prior knowledge about the
      problem in hand.
      Moreover, the bound given above indicates that the Lipschitz constant
      decreases for higher dimensions, since it is exponentially proportional to the 
      cardinality of the design space $\Omega_{\design}$.
      However, one can estimate the Hessian matrix by using an ensemble of
      realizations 
      and use it to estimate the Lipschitz constant. This can be
      helpful for choosing a proper step size for a steepest-descent
      algorithm.
      Alternatively, one can use a decreasing step-size sequence
      $\{\eta^(k)\}_{k=0}^{\infty}$ such that $\lim_{k\rightarrow
      \infty}\eta^(k)=0$ and $\sum_{k}\eta^(k)=\infty$, which guarantees 
      convergence to a local optimum; see~\cite[Proposition 4.1]{bertsekas1996neuro}.
      %

    \subsubsection{Analysis of stochastic steepest-descent algorithm}
      \label{subsubsec:convergence_analysis_stochastic}
      Here we show that $\widehat{\vec{g}(\hyperparam)}$ is an unbiased estimator of the 
      true gradient $\vec{g}(\hyperparam)$. 
      This fact, along with~\Cref{lemma:exact_gradient_bound} 
      and the fact that $\stochobj$ is a convex combination,
      guarantees convergence, in expectation, of~\Cref{alg:REINFORCE} to a
      locally optimal policy. 
      At each iteration $n$ of ~\Cref{alg:REINFORCE},
      the gradient is approximated with a sample from the respective conditional
      distribution $\CondProb{\design}{\hyperparam^{(n)}}$.
      First, we note that 
      \begin{equation}
        \begin{aligned}
          \norm{\widehat{\vec{g}}}
            &= \norm{
              \frac{1}{\Nens} \sum_{j=1}^{\Nens}
              \obj(\design[j]) \nabla_{\hyperparam} \log{ \CondProb{\design[j]}{\hyperparam} }
            } 
            %
            %
            %
            \leq \frac{1}{\Nens}
              \sum_{j=1}^{\Nens} | \obj(\design[j]) |
              \norm{ \nabla_{\hyperparam} \log{ \CondProb{\design[j]}{\hyperparam} }
            }  \,,
        \end{aligned}
      \end{equation}
      which shows that the magnitude of $\widehat{g}$ is bounded. Moreover, we will
      show next 
      that $\widehat{\vec{g}}(\hyperparam)$ 
      is an unbiased estimator of $\vec{g}(\hyperparam)$ and that the variance
      of this estimator is bounded. 

      \begin{lemma}\label{lemma:g_stats}
        The stochastic estimator $\widehat{\vec{g}}$ defined
        by~\eqref{eqn:kernel_stochastic_gradient} is unbiased, with sampling
        total variance $\brVar{\widehat{\vec{g}}}$, such that
        \begin{equation}\label{eqn:stoch_grad_unbiased}
          \Expect{}{\widehat{\vec{g}}} = \vec{g} =
          \nabla_{\hyperparam}\stochobj(\hyperparam) \,; \quad
            \brVar{\widehat{\vec{g}}}
              = \frac{1}{\Nens}
              \brVar{\obj(\design)\nabla_{\hyperparam}{\log{\CondProb{\design}{\hyperparam}}}}
              \,,
        \end{equation}
        where the total variance operator evaluates 
        the trace of the variance-covariance matrix of the random vector.  
        Moreover, the total variance of $\widehat{\vec{g}}$ is bounded, and there exist some positive 
        constants $K_1,\, K_2$, such that
        %
        \begin{equation}\label{eqn:stoch_grad_norm_bound}
         \Expect{}{\widehat{\vec{g}}\tran \widehat{\vec{g}} }
          = \expect{}{\sqnorm{\widehat{\vec{g}} } }
            \leq K_1 + K_2 \sqnorm{ \vec{g} }
            = K_1 + K_2 \vec{g}\tran \vec{g} \,.
        \end{equation}
      \end{lemma}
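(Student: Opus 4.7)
The plan is to establish the three claims in sequence: unbiasedness, the exact variance formula, and the bound on the expected squared norm.

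For unbiasedness I would run the log-derivative (``kernel'') trick used to derive~\eqref{eqn:kernel_exact_gradient} in reverse. Since the $\design[j]$ are i.i.d.\ samples from $\CondProb{\design}{\hyperparam}$, linearity of expectation gives
\[
\Expect{}{\widehat{\vec{g}}}
= \Expect{\design\sim\CondProb{\design}{\hyperparam}}{\obj(\design)\,\nabla_{\hyperparam}\log\CondProb{\design}{\hyperparam}}
= \sum_{k=1}^{2^{\Nsens}}\obj(\design[k])\,\nabla_{\hyperparam}\CondProb{\design[k]}{\hyperparam}
= \vec{g},
\]
where the middle equality uses the identity $\nabla_{\hyperparam}\log\CondProb{\design}{\hyperparam} = \nabla_{\hyperparam}\CondProb{\design}{\hyperparam}/\CondProb{\design}{\hyperparam}$ and the last equality is just~\eqref{eqn:exact_gradient}. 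For the variance formula I would use that the summands in~\eqref{eqn:kernel_stochastic_gradient} are i.i.d.: the covariance matrix of the sample mean of $\Nens$ i.i.d.\ random vectors equals $1/\Nens$ times the covariance of a single summand. Taking the trace of both sides yields the stated expression for $\brVar{\widehat{\vec{g}}}$.

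For the bound on $\Expect{}{\sqnorm{\widehat{\vec{g}}}}$, I would apply the bias-variance decomposition of a random vector,
\[
\Expect{}{\sqnorm{\widehat{\vec{g}}}}
= \mathrm{tr}(\mathrm{Cov}(\widehat{\vec{g}})) + \sqnorm{\Expect{}{\widehat{\vec{g}}}}
= \brVar{\widehat{\vec{g}}} + \sqnorm{\vec{g}},
\]
where the last step combines unbiasedness with the definition of total variance. The claimed inequality then holds with $K_2 = 1$ and $K_1 = \brVar{\widehat{\vec{g}}}$.

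The main obstacle is showing that $K_1$ is finite, that is, that the single-sample total variance $\brVar{\obj(\design)\,\nabla_{\hyperparam}\log\CondProb{\design}{\hyperparam}}$ is bounded. Because $|\obj(\design)|\le C$ by the constant introduced in~\Cref{lemma:exact_gradient_bound}, it suffices to control the second moment of the score function. For $\hyperparam$ in the interior of $[0,1]^{\Nsens}$ this follows from the explicit expression in~\eqref{eqn:kernel_stochastic_gradient_Bernoulli} together with the norm bound on $\nabla_{\hyperparam}\CondProb{\design}{\hyperparam}$ already used in the proof of~\Cref{lemma:exact_gradient_bound}. The delicate point is the behaviour of the score function as a component of $\hyperparam$ approaches $0$ or $1$, where the terms $\design_i/\hyperparam_i$ and $(\design_i-1)/(1-\hyperparam_i)$ blow up; here one must invoke the convention adopted after~\eqref{eqn:kernel_exact_gradient} and again in~\Cref{alg:REINFORCE} that degenerate components are dropped from the gradient, since in those directions $\design_i$ is deterministic and contributes zero variance. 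After this reduction, $K_1$ and $K_2$ depend only on $C$, $\Nsens$, and $\Nens$, as required.
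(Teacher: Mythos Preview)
Your plan matches the paper's proof almost exactly: unbiasedness via the log-derivative identity and linearity, the variance formula from the i.i.d.\ sample-mean, and the bias--variance decomposition $\Expect{}{\sqnorm{\widehat{\vec{g}}}}=\brVar{\widehat{\vec{g}}}+\sqnorm{\vec{g}}$ with $K_2=1$. The one slip is in the last step: to bound $K_1$ you point to ``the norm bound on $\nabla_{\hyperparam}\CondProb{\design}{\hyperparam}$'' from \Cref{lemma:exact_gradient_bound}, but that controls the gradient of the PMF, not the score. What you actually need (and what the paper invokes) is the second-moment bound on the \emph{score}, namely $\brVar{\nabla_{\hyperparam}\log\CondProb{\design}{\hyperparam}}=\sum_i 1/(\hyperparam_i(1-\hyperparam_i))\le \Nsens(1/\min_i\hyperparam_i+1/(1-\max_i\hyperparam_i))$ from the appendix; combined with $|\obj|\le C$ and $\Expect{}{\nabla_{\hyperparam}\log\CondProb{\design}{\hyperparam}}=0$ this gives the explicit $K_1=\tfrac{C^2\Nsens}{\Nens}\bigl(1/\min_i\hyperparam_i+1/(1-\max_i\hyperparam_i)\bigr)$. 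Your remark about degenerate components is a useful addition the paper leaves implicit.
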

      \begin{proof}
        For any realization of the random design 
        $\design\sim \CondProb{\design}{\hyperparam}$, it holds that 
        \begin{equation}\label{eqn:unbiased_estimate}
          \begin{aligned}
            \Expect{}{
              \obj(\design) \nabla_{\hyperparam}
              \log{ \CondProb{\design}{\hyperparam } }
            }
            &= \sum_{\design}
              \obj(\design) \nabla_{\hyperparam}
              \log{ \CondProb{\design}{\hyperparam } }
              \CondProb{\design}{\hyperparam } 
            = \sum_{\design}
              \obj(\design) \nabla_{\hyperparam}
              \CondProb{\design}{\hyperparam }  \\
            &= \nabla_{\hyperparam} \sum_{\design}   \obj(\design)
              \CondProb{\design}{\hyperparam }  
            = \nabla_{\hyperparam}
              \Expect{}{\obj(\design) }
            = \nabla_{\hyperparam} \stochobj(\hyperparam) \,.
          \end{aligned}
        \end{equation}

        Thus, unbiasedness of the estimator $\widehat{\vec{g}}$ follows because
        \begin{equation}
          \begin{aligned}
            \Expect{}{\widehat{\vec{g}}}
            &= \Expect{}{
              \frac{1}{\Nens} \sum_{j=1}^{\Nens}
              \obj(\design[j]) \nabla_{\hyperparam} \,
              \log{ \CondProb{\design[j]}{\hyperparam } }
            } \\
            &= \frac{1}{\Nens} \sum_{j=1}^{\Nens}
              \Expect{}{
                \obj(\design[j]) \nabla_{\hyperparam} \,
              \log{ \CondProb{\design[j]}{\hyperparam } }
            }
            \stackrel{(\ref{eqn:unbiased_estimate})}{=} 
            \frac{1}{\Nens} \sum_{j=1}^{\Nens}
              \nabla_{\hyperparam} \stochobj(\hyperparam)
            =  \nabla_{\hyperparam} \stochobj(\hyperparam) \,.
          \end{aligned}
        \end{equation}

        The total variance $\brVar{\widehat{\vec{g}}}$ follows by definition of
        $\widehat{\vec{g}}$ as
        \begin{equation}\label{eqn:total_var}
          \brVar{\widehat{\vec{g}}}
          =\brVar{\frac{1}{\Nens}\sum_{j=1}^{\Nens}
   \obj(\design[j])\nabla_{\hyperparam}{\log{\CondProb{\design[j]}{\hyperparam}} }}
          =\frac{1}{\Nens} \brVar{          \obj(\design)\nabla_{\hyperparam}{\log{\CondProb{\design}{\hyperparam}} }}
            \,.
        \end{equation}

        To prove~\eqref{eqn:stoch_grad_norm_bound}, we
        rewrite~\eqref{eqn:total_var} as follows, 
        \begin{equation}
          \begin{aligned}
            \Expect{}{
              \widehat{\vec{g}} \tran \widehat{\vec{g}}
            }
            &= \brVar{\widehat{\vec{g}}} 
              + \Expect{}{\widehat{\vec{g}}} \tran \Expect{}{\widehat{\vec{g}}}
            = \frac{1}{\Nens} \brVar{          \obj(\design)\nabla_{\hyperparam}{\log{\CondProb{\design}{\hyperparam}} }}
              + \vec{g}\tran \vec{g}
              \\
            &\leq \frac{1}{\Nens} \brVar{
              C \nabla_{\hyperparam}{\log{\CondProb{\design}{\hyperparam}} }}
              + \vec{g}\tran \vec{g} \\
            &= \frac{C^2}{\Nens} \brVar{
              \nabla_{\hyperparam}{\log{\CondProb{\design}{\hyperparam}} }}
              + \vec{g}\tran \vec{g}
            \leq \frac{C^2\, \Nsens}{\Nens} 
              \left(\frac{1}{\min\limits_i \hyperparam}
                + \frac{1}{1-\max\limits_i \hyperparam}
                \right)
              + \vec{g}\tran \vec{g} \,,
          \end{aligned}
        \end{equation}
        where the last inequality follows
        by~\eqref{eqn:grad_log_Bernoulli_norm_bound},
        and, as before, $C = \max\limits_{\design\in \Omega_{\design}}{\{
          \left|\obj(\design)\right| \}}$.
        From~\Cref{lemma:exact_gradient_bound}, the gradient is bounded. By
        setting $K_1=\frac{C^2 \Nsens}{\Nens} \left(  \frac{1}{\min \hyperparam_i} +
        \frac{1}{1-\max \hyperparam_i} \right)$ and $K_2=1$, 
        one can guarantee that $
          \Expect{}{\widehat{\vec{g}}\tran \widehat{\vec{g}} }
            \leq K_1 + K_2 \vec{g}\tran \vec{g}  \,. \qquad$
      \end{proof}
      
    The significance of~\Cref{lemma:g_stats} 
    is that \eqref{eqn:stoch_grad_norm_bound} guarantees that Assumption (d)
    of~\cite[Assumptions 4.2]{bertsekas1996neuro} is satisfied.
    This, along with the fact that $\widehat{\vec{g}}$ is unbiased, and given
    the boundedeness of entries of the Hessian 
    (\Cref{lemma:exact_gradient_bound}), and by complying with the
    step-size requirement, guarantees convergence of~\Cref{alg:REINFORCE} 
    to a locally optimal policy $\hyperparam\opt$ .

    The sample-based approximation of the gradient described
    by~\eqref{eqn:kernel_stochastic_gradient_Bernoulli} exhibits high
    variance, however, and thus requires a prohibitively large number of samples in order to achieve
    acceptable convergence behavior to a locally optimal policy.
    Alternatively, one can use 
    importance sampling~\cite{arouna2004adaptative} 
    or antithetic variates~\cite{l1994efficiency} 
    or can add a
    \text{baseline} to the objective, in order to reduce the variability of the
    estimator. This issue is discussed 
    next (\Cref{subsec:variance_reduction_baseline}).

  \subsection{Variance reduction: introducing the baseline}
  \label{subsec:variance_reduction_baseline}
    %
    The formulation of the stochastic estimator $\widehat{\vec{g}}$ defined
    by~\eqref{eqn:kernel_stochastic_gradient}
    provides limited control over its variability.
    The implication of~\Cref{lemma:g_stats} is that, while the gradient estimator
    is unbiased, large samples are required to reduce its variability.
    Instead of increasing the sample size, one can reduce the estimator
    variability by introducing a \textit{baseline} $\baseline$ to the objective
    function.
    Specifically, it follows from~\eqref{eqn:stoch_grad_unbiased} that 
    $\Var{(\widehat{g}})=\Oh{\Nens^{-1}}$; thus,
    to reduce the variability of the estimator,
    one would need to increase the sample size.
    In general, MC estimators are known to suffer from high variance;  thus
    this estimator, while unbiased, will be impractical especially if 
    $\Nsens$ is large and the sample size $\Nens$ is small.
    The objective function $\stochobj$ can be replaced with the following
    baseline version,
    \begin{equation}\label{eqn:baseline_stoch_obj}
      \stochobj^{\rm b}(\hyperparam)
        := \Expect{\design\sim \CondProb{\design}{\hyperparam}}{\obj(\design) - \baseline} \,,
    \end{equation}
    where
    $\baseline$ is a baseline assumed to be independent from the
    parameter $\hyperparam$.
    Since $\baseline$  is independent from $\stochobj$ and by linearity of the
    expectation, it follows that
    $ \stochobj^{\rm b}(\hyperparam)
      = \Expect{\design\sim \CondProb{\design}{\hyperparam}}{\obj(\design) } - \baseline
    $, and thus
    $ \argmin_{\hyperparam}{\stochobj^{\rm b}} = \argmin_{\hyperparam}{\stochobj}$,
    and
    $ \nabla_{\hyperparam} \stochobj^{\rm b}
      = \nabla_{\hyperparam} \stochobj -  \nabla_{\hyperparam} \baseline
      = \nabla_{\hyperparam} \stochobj  \,.
    $
    By applying the kernel trick again, we can write the gradient
    of~\eqref{eqn:baseline_stoch_obj} as
    \begin{equation}\label{eqn:baseline_stoch_obj_grad}
      \nabla_{\hyperparam} \stochobj^{\rm b}(\hyperparam)
        = \Expect{\design\sim \CondProb{\design}{\hyperparam}}{
          \left(\obj(\design) - \baseline\right) \nabla_{\hyperparam}
            \log{ \CondProb{\design}{\hyperparam } }
        } \,,
    \end{equation}
    which can be approximated by the sample estimator
    \begin{equation}\label{eqn:kernel_stochastic_gradient_baseline}
      \begin{aligned}
        \widehat{\vec{g}}^{\rm b}
        &:= \frac{1}{\Nens} \sum_{j=1}^{\Nens}
          \left(\obj(\design[j]) -\baseline\right) \nabla_{\hyperparam} \,
          \log{ \CondProb{\design[j]}{\hyperparam } }
        = \widehat{\vec{g}} -
        \frac{\baseline}{\Nens} \sum_{j=1}^{\Nens}
          \nabla_{\hyperparam} \,
          \log{ \CondProb{\design[j]}{\hyperparam } }  \,.
      \end{aligned}
    \end{equation}

    Note that~\eqref{eqn:baseline_stoch_obj_grad} is also an unbiased
    estimator since 
    $\Expect{}{\widehat{\vec{g}}^{\rm b}}=\Expect{}{\widehat{\vec{g}}}=\vec{g}$.
    The variance of such an estimator, however, can be
    controlled by choosing an adequate baseline $\baseline$.
    In~\Cref{subsec:optimal_baseline}, we provide guidance for choosing
    the baseline. In what follows, however, we keep the baseline
    as a user-defined parameter, opening the door for other choices.
    For example, as shown in the numerical experiments,
    $\baseline=\frac{1}{2}\left( \obj(0) + \obj(1) \right) $ can be an 
    utilized as a constant baseline, which avoids the additional overhead of
    calculating~\eqref{eqn:optimal_baseline} at each iteration. This empirical
    choice, however, is suboptimal and is not guaranteed to provide acceptable
    results in all settings.

  \subsubsection{On the choice of the baseline}
  \label{subsec:optimal_baseline}
    We define the optimal baseline to be the one that minimizes the variability in the
    gradient estimator. 
    We provide the following results that will help us obtain an
    optimal baseline.
    \begin{lemma}
      Let
      $ \vec{d} = \frac{1}{\Nens}
        \sum_{j=1}^{\Nens} \nabla_{\hyperparam}
        \log{ \CondProb{\design[j]}{\hyperparam } }
      $.
      Then the following identities hold:
      \begin{subequations}
        \begin{align}
          \Expect{}{\vec{d}}
            &= 0 \,,  \\
          \brVar{\vec{d}}
            &= \frac{1}{\Nens}
            \sum_{i=1}^{\Nsens}\frac{1}  {\hyperparam_i-\hyperparam_i^2} \,.  
        \end{align}
      \end{subequations}
    \end{lemma}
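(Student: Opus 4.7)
The plan is to treat the two identities in sequence, relying on the standard score-function calculus and the product structure of the multivariate Bernoulli distribution~\eqref{eqn:joint_Bernoulli_pmf_prod}.

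For the expectation identity, I would first establish that for a single summand $\nabla_{\hyperparam}\log\CondProb{\design}{\hyperparam}$, the expectation vanishes by the standard score-function argument:
\[
\Expect{\design\sim\CondProb{\design}{\hyperparam}}{\nabla_{\hyperparam}\log\CondProb{\design}{\hyperparam}}
= \sum_{\design} \nabla_{\hyperparam}\CondProb{\design}{\hyperparam}
= \nabla_{\hyperparam} \sum_{\design} \CondProb{\design}{\hyperparam}
= \nabla_{\hyperparam} 1 = \vec{0}\,,
\]
where interchanging the gradient and the finite sum is justified because the support of the multivariate Bernoulli is finite. Linearity of expectation then gives $\Expect{}{\vec{d}}=\vec{0}$.

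For the variance identity, I would first exploit the independence of the draws $\design[j]\sim\CondProb{\design}{\hyperparam}$ to reduce the total variance of the average to a single-sample total variance,
\[
\brVar{\vec{d}} = \frac{1}{\Nens}\,\brVar{\nabla_{\hyperparam}\log\CondProb{\design}{\hyperparam}}\,.
\]
Next I would substitute the explicit form of the score given by the Bernoulli log-likelihood gradient~\eqref{eqn:grad_log_Bernoulli}, writing the $i$-th component as
\[
\left[\nabla_{\hyperparam}\log\CondProb{\design}{\hyperparam}\right]_i
= \frac{\design_i}{\hyperparam_i} + \frac{\design_i-1}{1-\hyperparam_i}
= \frac{\design_i-\hyperparam_i}{\hyperparam_i(1-\hyperparam_i)}\,.
\]
Because the components $\design_i$ are independent under the product Bernoulli measure~\eqref{eqn:joint_Bernoulli_pmf_prod}, the coordinates of the score are independent random variables, so the total variance decomposes into a sum of componentwise variances. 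Using $\Var{\design_i}=\hyperparam_i(1-\hyperparam_i)$ in each term,
\[
\Var{\frac{\design_i-\hyperparam_i}{\hyperparam_i(1-\hyperparam_i)}}
= \frac{\hyperparam_i(1-\hyperparam_i)}{\bigl(\hyperparam_i(1-\hyperparam_i)\bigr)^2}
= \frac{1}{\hyperparam_i-\hyperparam_i^2}\,,
\]
and summing over $i=1,\dots,\Nsens$ and dividing by $\Nens$ yields the claimed identity.

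There is no substantive obstacle in this proof: both statements reduce to the standard observation that the score has zero mean and that the Fisher information of a product distribution is additive over its independent factors. The only care needed is to keep track of the $1/\Nens$ scaling introduced by the Monte Carlo average and to simplify the component of the score into the form $(\design_i-\hyperparam_i)/(\hyperparam_i(1-\hyperparam_i))$ before taking variances, since this makes the Bernoulli variance cancellation transparent.
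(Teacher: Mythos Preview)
Your proof is correct and follows essentially the same route as the paper: both use the zero-mean score identity for the first part, then independence of the Monte Carlo draws to reduce $\brVar{\vec{d}}$ to $\frac{1}{\Nens}$ times the single-sample total variance, which is then computed coordinatewise from the Bernoulli structure. The only cosmetic difference is that the paper cites \Cref{lemma:grad_logvar_dist} for the single-sample variance and expands $\bigl(\design_i/\hyperparam_i+(\design_i-1)/(1-\hyperparam_i)\bigr)^2$ directly, whereas you first rewrite the $i$th score component as $(\design_i-\hyperparam_i)/\bigl(\hyperparam_i(1-\hyperparam_i)\bigr)$ before taking the variance---a slightly cleaner bookkeeping choice but the same argument.
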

    \begin{proof}
      The first identity follows from
      \begin{equation}
        \Expect{}{\vec{d}}
          = \Expect{}{
            \frac{1}{\Nens}
            \sum_{j=1}^{\Nens} \nabla_{\hyperparam}
            \log{ \CondProb{\design[j]}{\hyperparam } }
          }
          = \frac{1}{\Nens} \sum_{j=1}^{\Nens} \Expect{}{
              \nabla_{\hyperparam}
            \log{ \CondProb{\design[j]}{\hyperparam } }
          }
          = 0 \,,
      \end{equation}
      where the last equality follows from~\eqref{eqn:grad_logvar_dist}.
      Because $\Expect{}{\vec{d}}=0$, and by utilizing~\Cref{lemma:grad_logvar_dist},
      the second identity follows as
      \begin{equation}
        \begin{aligned}
          \brVar{\vec{d}}
          &= \Expect{}{\vec{d}\tran\vec{d}} -
            \Expect{}{\vec{d}}\tran \Expect{}{\vec{d}}
          = \Expect{}{\vec{d}\tran\vec{d}}
          = \frac{1}{\Nens^2} \sum_{j=1}^{\Nens}
            \brVar{\nabla_{\hyperparam}\log{ \CondProb{\design[j]}{\hyperparam }
            } }  \\
          &= \frac{1}{\Nens^2}
            \sum_{j=1}^{\Nens}
              \sum_{i=1}^{\Nsens}\frac{1}  {\hyperparam_i-\hyperparam_i^2}
          = \frac{1}{\Nens}
            \sum_{i=1}^{\Nsens}\frac{1}  {\hyperparam_i-\hyperparam_i^2}
          \,.
        \end{aligned}
      \end{equation}

      $\qquad$
    \end{proof}

    \begin{lemma}\label{lemma:baseline_estimate_variance}
      The ensemble estimator $\widehat{\vec{g}}^{\rm b} $ described
      by~\eqref{eqn:kernel_stochastic_gradient_baseline} is unbiased, with
      sampling total variance $\brVar{\widehat{\vec{g}}^{\rm b} }$, such that
      \begin{equation}
        \Expect{}{\widehat{\vec{g}}^{\rm b}} = \vec{g} =
        \nabla_{\hyperparam}\stochobj(\hyperparam) \,; \quad
        \brVar{\widehat{\vec{g}}^{\rm b} } = \brVar{\widehat{\vec{g}}}
            - 2 \baseline \Expect{}{\widehat{\vec{g}}\tran \vec{d} }
            + \frac{\baseline^2}{\Nens}
              \sum_{i=1}^{\Nsens}\frac{1}  {\hyperparam_i-\hyperparam_i^2} \,.
      \end{equation}
    \end{lemma}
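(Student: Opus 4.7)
The plan is to start by rewriting the baseline estimator in the compact form $\widehat{\vec{g}}^{\rm b} = \widehat{\vec{g}} - \baseline\,\vec{d}$, which is immediate from~\eqref{eqn:kernel_stochastic_gradient_baseline} and the definition of $\vec{d}$ given just above. With this decomposition, both claims reduce to routine manipulations of expectation and (total) variance, leveraging results already established in the paper.

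For the unbiasedness claim, I would apply linearity of expectation to obtain $\Expect{}{\widehat{\vec{g}}^{\rm b}} = \Expect{}{\widehat{\vec{g}}} - \baseline\,\Expect{}{\vec{d}}$, then invoke~\Cref{lemma:g_stats} (which gives $\Expect{}{\widehat{\vec{g}}} = \vec{g}$) together with the preceding lemma (which gives $\Expect{}{\vec{d}} = 0$). The $\baseline$ term vanishes, yielding $\Expect{}{\widehat{\vec{g}}^{\rm b}} = \vec{g} = \nabla_{\hyperparam}\stochobj(\hyperparam)$.

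For the total variance, I would expand
\begin{equation*}
\brVar{\widehat{\vec{g}}^{\rm b}}
 = \brVar{\widehat{\vec{g}} - \baseline\, \vec{d}}
 = \brVar{\widehat{\vec{g}}} - 2\baseline\,\Cov(\widehat{\vec{g}}, \vec{d}) + \baseline^{2}\,\brVar{\vec{d}} \,,
\end{equation*}
using the standard identity for the variance of a linear combination (interpreted via the trace of the covariance matrix, which is consistent with the paper's convention for total variance). Since $\Expect{}{\vec{d}} = 0$, the cross term simplifies via $\Cov(\widehat{\vec{g}}, \vec{d}) = \Expect{}{\widehat{\vec{g}}\tran \vec{d}} - \Expect{}{\widehat{\vec{g}}}\tran \Expect{}{\vec{d}} = \Expect{}{\widehat{\vec{g}}\tran \vec{d}}$. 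Finally, I substitute $\brVar{\vec{d}} = \frac{1}{\Nens}\sum_{i=1}^{\Nsens}\frac{1}{\hyperparam_i-\hyperparam_i^2}$ from the immediately preceding lemma to match the stated form.

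There is no serious obstacle here; the only subtlety worth flagging is consistency of the ``total variance'' convention when expanding $\brVar{\widehat{\vec{g}} - \baseline\vec{d}}$, since total variance is the trace of the covariance matrix rather than a scalar variance. I would justify the cross-term identity by writing the covariance matrix expansion componentwise and then taking the trace, so that $\trace(\Cov(\widehat{\vec{g}}, \vec{d}) + \Cov(\vec{d}, \widehat{\vec{g}})) = 2\,\Expect{}{\widehat{\vec{g}}\tran \vec{d}}$ follows cleanly from $\Expect{}{\vec{d}} = 0$. Everything else is bookkeeping.
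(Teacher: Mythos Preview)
Your proposal is correct and follows essentially the same route as the paper: both rely on the decomposition $\widehat{\vec{g}}^{\rm b} = \widehat{\vec{g}} - \baseline\,\vec{d}$, invoke $\Expect{}{\vec{d}}=0$ and $\brVar{\vec{d}} = \frac{1}{\Nens}\sum_i \frac{1}{\hyperparam_i-\hyperparam_i^2}$ from the preceding lemma, and unbiasedness of $\widehat{\vec{g}}$ from~\Cref{lemma:g_stats}. The only cosmetic difference is that the paper expands the total variance via $\Expect{}{(\widehat{\vec{g}}^{\rm b})\tran \widehat{\vec{g}}^{\rm b}} - \vec{g}\tran\vec{g}$ and then opens the inner product, whereas you apply the bilinear variance/covariance identity directly; the computations are line-for-line equivalent.
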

    \begin{proof}
      The estimator is unbiased because 
      $ \nabla_{\hyperparam} \stochobj^{\rm b}(\hyperparam)
        = \nabla_{\hyperparam} \stochobj(\hyperparam) \,,
      $
      and
      \begin{equation}
        \begin{aligned}
          \Expect{}{\widehat{\vec{g}}^{\rm b} }
            &= \Expect{}{
              \frac{1}{\Nens} \sum_{j=1}^{\Nens}
              \obj(\design[j]-\baseline) \nabla_{\hyperparam}
              \log{ \CondProb{\design[j]}{\hyperparam } }
            }  \\
            &= \frac{1}{\Nens} \sum_{j=1}^{\Nens}
              \Expect{}{\obj(\design[j]-\baseline) \nabla_{\hyperparam}
              \log{ \CondProb{\design[j]}{\hyperparam } }
            }  \\
            &= \frac{1}{\Nens} \sum_{j=1}^{\Nens}
              \Expect{}{\obj(\design[j]) \nabla_{\hyperparam}
              \log{ \CondProb{\design[j]}{\hyperparam } }
            }
            - \frac{1}{\Nens} \sum_{j=1}^{\Nens}
              \Expect{}{\baseline \nabla_{\hyperparam}
              \log{ \CondProb{\design[j]}{\hyperparam } }
            }  \\
            & = \nabla_{\hyperparam} \stochobj(\hyperparam)
            - \frac{\baseline}{\Nens} \sum_{j=1}^{\Nens}
              \Expect{}{ \nabla_{\hyperparam}
              \log{ \CondProb{\design[j]}{\hyperparam } }
            } \\
            & = \nabla_{\hyperparam} \stochobj(\hyperparam)  \,,
        \end{aligned}
      \end{equation}
      where the last step follows by~\Cref{lemma:grad_logvar_dist}.
      The total variance of the estimator $\widehat{\vec{g}}^{\rm b}$ is
      given by 
      \begin{equation}\label{eqn:grad_estimate_var_1}
        \brVar{\widehat{\vec{g}}^{\rm b}}
        = \Expect{}{\left(\widehat{\vec{g}}^{\rm b}\right) \tran \widehat{\vec{g}}^{\rm b} }
        - \Expect{}{ \widehat{\vec{g}}^{\rm b}}\tran \Expect{}{ \widehat{\vec{g}}^{\rm b}}
        = \Expect{}{\left(\widehat{\vec{g}}^{\rm b}\right) \tran \widehat{\vec{g}}^{\rm b} }
        - \vec{g} \tran \vec{g}  \,,
      \end{equation}
      where
      $\vec{g}=\nabla_{\hyperparam}\stochobj^{\rm
      b}=\nabla_{\hyperparam}\stochobj$.
      The first term follows as
      \begin{equation}\label{eqn:grad_estimate_var_2}
        \begin{aligned}
          \Expect{}{\left(\widehat{\vec{g}}^{\rm b}\right) \tran \widehat{\vec{g}}^{\rm b} }
          &= \Expect{}{
            \left(\widehat{\vec{g}}-\baseline\vec{d}\right)\tran
            \left(\widehat{\vec{g}}-\baseline\vec{d} \right)
            }  \\
          &= \Expect{}{ \widehat{\vec{g}}\tran \widehat{\vec{g}} }
            - 2 \Expect{}{\widehat{\vec{g}}\tran \baseline\vec{d} }
            + \Expect{}{\baseline\vec{d}\tran \baseline\vec{d}}  \\
          &= \brVar{\widehat{\vec{g}}}
            + \Expect{}{\widehat{\vec{g}}}\tran \Expect{}{\widehat{\vec{g}}}
            - 2\baseline \Expect{}{\widehat{\vec{g}}\tran \vec{d} }
            + \baseline^2 \brVar{\vec{d}} + \baseline^2\Expect{}{\vec{d}}\tran
            \Expect{}{\vec{d}} \\
          &= \brVar{\widehat{\vec{g}}}
            + \vec{g}\tran \vec{g}
            - 2 \baseline\Expect{}{\widehat{\vec{g}}\tran \vec{d} }
            + \baseline^2 \brVar{\vec{d}}   \\
          &= \brVar{\widehat{\vec{g}}}
            + \vec{g}\tran \vec{g}
            - 2 \baseline\Expect{}{\widehat{\vec{g}}\tran \vec{d} }
            + \frac{\baseline^2}{\Nens}
              \sum_{i=1}^{\Nsens}\frac{1}  {\hyperparam_i-\hyperparam_i^2}  \,.
        \end{aligned}
      \end{equation}

      From~(\ref{eqn:grad_estimate_var_1},~\ref{eqn:grad_estimate_var_2}), 
      the total variance follows as
      \begin{equation}\label{eqn:baseline_estimator_variance}
        \brVar{\widehat{\vec{g}}^{\rm b}}
        = \brVar{\widehat{\vec{g}}}
            - 2 \baseline \Expect{}{\widehat{\vec{g}}\tran \vec{d} }
            + \frac{\baseline^2}{\Nens}
              \sum_{i=1}^{\Nsens}\frac{1}  {\hyperparam_i-\hyperparam_i^2}
         \,.
      \end{equation}

    \end{proof}

    The variance of $\widehat{\vec{g}}^{\rm b}$ is described
    by~\Cref{lemma:baseline_estimate_variance}.
    We can view~\eqref{eqn:baseline_estimator_variance} as a quadratic
    expression in $\baseline$ and minimize it over $\baseline$. 
    Because $\sum_{i=1}^{\Nsens}\frac{1}  {\hyperparam_i-\hyperparam_i^2} > 0$,
    the quadratic is convex, and the min in $\baseline$ is obtained
    by equating the derivative of the estimator
    variance~\eqref{eqn:baseline_estimator_variance} to zero, which yields
    the following optimal baseline:
    \begin{equation}\label{eqn:optimal_baseline}
      \baseline^{\rm opt}
        = \frac{\Nens }{\sum_{i=1}^{\Nsens}\frac{1}{\hyperparam_i-\hyperparam_i^2}}
          \, \Expect{}{\widehat{\vec{g}}\tran \vec{d} } \,.
    \end{equation}

    The expectation in~\eqref{eqn:optimal_baseline}, however, depends on the
    value of the function  $\stochobj$ and can be estimated by an ensemble of
    realizations $\widehat{\vec{g}}[j],\,\vec{d}[j],\,j=1,2,\ldots,m$,
    \begin{equation} 
      \Expect{}{ \widehat{\vec{g}}\tran \vec{d} }
      \approx
      \frac{1}{b_m} \sum_{e=1}^{b_m} \widehat{\vec{g}}[e]\tran \vec{d}[e] \,,
    \end{equation}
    where $\vec{g}[e]$ and $\vec{d}[e]$ are realizations of $\vec{g}$ and
    $\vec{d}$, respectively.
    Thus, we propose 
    to estimate the optimal baseline $\baseline\opt$ as follows. 
    Given  a realization $\hyperparam$ of the hyperparameter,
    a set of $b_m$ batches each of size $\Nens$ are
    sampled from $\CondProb{\design}{\hyperparam}$, resulting in the
    multivariate Bernoulli samples
    $\{\design[e,j]; e=1,2,\ldots,b_m;\, j=1,2,\ldots,\Nens \}$.
    The following function is then used to
    estimate $\baseline\opt$:
    \begin{equation}\label{eqn:optimal_baseline_estimate}
      \baseline\opt
      \approx
      \widehat{\baseline}\opt
        :=
        \frac{\sum\limits_{e=1}^{b_m} 
          \left(
            \sum\limits_{j=1}^{\Nens} \obj(\design[e,j]) \nabla_{\hyperparam}
            \log{ \CondProb{\design[e, j]}{\hyperparam } } 
          \right)
          \tran\! 
          \left(
            \sum\limits_{j=1}^{\Nens} \nabla_{\hyperparam}
            \log{ \CondProb{\design[e, j]}{\hyperparam } } 
          \right)
          }
        { \Nens\, b_m\, \sum\limits_{i=1}^{\Nsens}\frac{1}{\hyperparam_i-\hyperparam_i^2}}
      \,.
    \end{equation}
    %
    

    \subsubsection{Complete algorithm statement}
    \label{subsubsec:full_algorithm}
    We conclude this section with an algorithmic description of the stochastic
    steepest-descent algorithm with the optimal baseline suggested here.
    \Cref{alg:REINFORCE_baseline} is a modification of~\Cref{alg:REINFORCE}, where
    we  added only the baseline~\eqref{eqn:optimal_baseline_estimate}.

    \begin{algorithm}
      \caption{Stochastic optimization for binary OED with the optimal baseline.}
      \label{alg:REINFORCE_baseline}
      \begin{algorithmic}[1] 
      
        \Require{Initial distribution parameter $\hyperparam^{(0)}$,
                  step size schedule $\eta^{(n)}$, 
                  sample sizes $\Nens,\, m$,
                  baseline batch size $b_m$}
        \Ensure{$\design\opt$}

        \State{initialize $n = 0$}

        \While{Not Converged}
          \State {
            Update $n\leftarrow n+1$ 
          }
          
          \State{
            Sample $\{\design[j]; j=1,2,\ldots,\Nens \}\sim
              \CondProb{\design}{\hyperparam^{(n)}}$ 
          }
        
          \State{
            Calculate  $\baseline$ = \Call{OptimalBaseline}{$\hyperparam^{(n)}$, 
              $\Nens$, $b_m$} 
          }
          
          \State\label{algstep:REINFORCE_baseline:grad}{
            Calculate $ \vec{g}^{(n)}=\frac{1}{\Nens} \sum_{j=1}^{\Nens}
              \left(\obj(\design[j]-\baseline)\right) \sum_{i=1}^{\Nsens} 
              \left( 
                \frac{\design_i[j]}{\hyperparam_i} 
                  + \frac{\design[j]_i-1}{1-\hyperparam_i}
              \right) \,\vec{e}_i $
          }
        
          \State \label{algstep:REINFORCE_baseline:proj}{
            Update $\hyperparam^{(n+1)} 
              = \Proj{}{\hyperparam^{(n)} - \eta^{(n)} g^{(n)} } $
          }
          
        \EndWhile
        
        \State{
          Set $\hyperparam\opt = \hyperparam^{(n)}$
        }

        \State\label{algstep:REINFORCE_baseline:sampling} {
          Sample $\{\design[j];j=1,2,\ldots,m \} \sim 
            \CondProb{\design}{\hyperparam\opt}$,
          and calculate $\obj(\design[j])$
        }
   
        \Return{
            $\design\opt $: the design $\design$ with smallest 
              value of $\obj$ in the sample.
        }
        
        \vspace{5pt}

        \Function{OptimalBaseline}{$\theta$, $\Nens$, $b_m$} 
          
          \State{Initialize $\baseline \gets 0$}

          \For {$e$ $\gets 1$ to $b_m$}
            
            \For {$j$ $\gets 1$ to $\Nens$}
              \State{
                Sample $\design[j] \sim \CondProb{\design}{\hyperparam}$ 
              }
              \State\label{algstep:REINFORCE_baseline:logprob}{
                Calculate $\vec{r}[j] = \sum_{i=1}^{\Nsens} 
                  \left( 
                    \frac{\design_i[j]}{\hyperparam_i} 
                      + \frac{\design[j]_i-1}{1-\hyperparam_i}
                  \right) \,\vec{e}_i $
              }
            \EndFor

            \State{
              Calculate $\vec{d}[e] = \frac{1}{\Nens} \sum_{j=1}^{\Nens}
              \vec{r}[j] $
            }
            
            \State{
              Calculate $ \vec{g}[e] = \frac{1}{\Nens} \sum_{j=1}^{\Nens}
                \obj(\design[j]) \, \vec{r}[j] $
            }
            \State{
              Update $\baseline \gets \baseline + \left(\vec{g}[e] \right)\tran \vec{d}[e]$
            }
          \EndFor

          \State{
            Update 
              $\baseline \gets \baseline 
                \times 
                \frac{ \Nens }{b_m \, 
                \sum_{i=1}^{\Nsens} \frac{1}{\hyperparam_i- \hyperparam_i^2}}
              $
          }
          
          \State \Return $\baseline$
        \EndFunction

      \end{algorithmic}
    \end{algorithm}

    Note that in both~\Cref{alg:REINFORCE} and~\Cref{alg:REINFORCE_baseline},
    the value of $\obj$ is evaluated repeatedly at instances of the binary design
    $\design$. With the algorithm proceeding, it becomes more likely to revisit
    previously sampled designs. One should keep track of the sampled designs
    and the corresponding value of $\obj$, for example, by utilizing the indexing
    scheme~\eqref{eqn:binary_index}, to prevent redundant computations.
  We remark  that as noted in~\autoref{alg:REINFORCE}, 
    if $\hyperparam_i\in\{0, 1\}$
    in~\autoref{algstep:REINFORCE_baseline:grad} 
    or~\autoref{algstep:REINFORCE_baseline:logprob} 
    of~\autoref{alg:REINFORCE_baseline}, 
    then $\design_i=\hyperparam_i$. Thus the
    corresponding term in the summation vanishes.

  \subsection{Computational considerations}
  \label{subsec:computational_cost}
    Here, we discuss the computational cost of the proposed algorithms and of
    standard OED approaches in terms
    of the number of forward $\F$ and adjoint $\F\adj$ model evaluations.
    We assume $\obj$ is set to the A-optimality criterion, that is, the trace
    of the posterior covariance of the inversion parameter. 
    This discussion extends easily to other OED optimality criteria.

    Standard OED approaches require solving the relaxed OED 
    problem~\eqref{eqn:relaxed_optimization}, which requires
    evaluating the gradient of the objective $\obj$, namely, the optimality
    criterion, with respect to the relaxed design, in addition to evaluating the
    objective itself $\obj$ for line-search optimization.
    Formulating the gradient requires one Hessian solve and a forward 
    integration of the model $\F$ for each entry of the gradient. 
    The Hessian, being the inverse of the
    posterior covariance, is a function of the relaxed design; see, for example,~\cite{attia2018goal}
    for details.
    Hessian solves can be done by using a preconditioned conjugate gradient
    (CG) method. 
    Each application of the Hessian requires a forward and an adjoint model
    evaluation.  If the prior covariance is employed as a preconditioner,
    and assuming $r \ll \Nstate$ is the numerical rank of the prior preconditioned 
    data misfit Hessian
    (see~\cite{bui2013computational,IsaacPetraStadlerEtAl15}), then
    the cost of one Hessian solve is $\mathcal{O}(r)$ CG iterations, 
    that is, $\mathcal{O}(2 r)$ evaluations of the forward model $\F$.
    To summarize, the cost of evaluating the optimality criterion $\obj$ for a
    given design $\design$ is  $\Oh{2\, r\, \Nstate}$ forward model solves.
    Moreover, the cost of evaluating the gradient of $\obj$ with respect to the
    design is $\Oh{2\, r\,\Nsens \Nstate}$ model solves.

    In contrast, with the proposed algorithms, we do not need to evaluate the gradient of
    $\obj$ with respect to the design.
    At each iteration of~\Cref{alg:REINFORCE}, the function $\obj$ is evaluated
    for each sampled design to evaluate the stochastic gradient. 
    Assuming the size of the sample used to formulate the stochastic
    gradient $\widehat{\vec{g}}$ is $\Nens$, then the cost of each iteration is
    $\Oh{2\, r\, \Nens \Nstate }$.
    The cost of evaluating the gradient of the multivariate Bernoulli
    distribution~\eqref{eqn:Bernoulli_PMF_gradient} is negligible compared with
    solving the forward model $\F$.
    
    Note that unlike the case with the relaxed OED formulation,
    in the proposed framework the design space is binary by definition; 
    and as we will show later, as the optimization algorithm proceeds, 
    it reuses previously sampled designs.
    Moreover, the value of $\obj$ can be evaluated independently, and thus the
    stochastic gradient approximation is embarrassingly parallel. 
    
\section{Numerical Experiments}
  \label{sec:numerical_experiments}
  We start this section with a small illustrative model
  to clarify the approach proposed and
  provide additional insight.
  Next, we present numerical experiments using an advection-diffusion model.
  
  \subsection{Results for a two-dimensional problem}
  \label{subsec:toy_models_results}
  Here we discuss an idealized  
  problem following the
  definition of the linear forward and inverse problem described
  in~\Cref{sec:background}.
  Python code for this set of experiments is available 
  from~\cite{attia2020DOERL}.
  We define the forward operator $\F$ as a short wide matrix 
  that projects model space into observation space. 
  Moreover, we specify prior and observation
  covariance matrices and formulate the posterior covariance matrix
  $\Cparampostmat$ and the objective $\obj$ accordingly.
  The forward operator and prior and observation noise covariances are 
  \begin{equation} \label{eqn:oneD_Toy}
    \F :=  \begin{bmatrix}0.5 & 0.5 & 0 & 0  \\ 0 & 0 & 0.5 & 0.5 \end{bmatrix};
    \quad
    \Cparamprior := \diag{4, 1, 0.25, 1}
    \,; 
    \quad
    \Cobsnoise:= \diag{0.25, 1}
    \,,
  \end{equation}
  which result in the following form of the objective 
  $\obj= \Trace{\Cparampost(\design)}$:
  \begin{equation}\label{eqn:oneD_Toy_objective}
    \obj(\design) 
      = \Trace{
          \begin{bmatrix}
            \design_1+0.25 & \design_1 & 0 & 0  \\ 
            \design_1 & \design_1+1 & 0 & 0  \\ 
            0 & 0 & 0.25\design_2+4 & 0.25\design_2  \\ 
            0 & 0 & 0.25\design_2 & 0.25\design_2+1 
          \end{bmatrix}  
        }
      =2\, \design_1 + 0.5\, \design_2 + 6.25
      \,.
  \end{equation}

    \Cref{fig:REINFORCE_2dToy_optimiazation} (left) shows the surface of the two
    objective functions $\obj$ and $\stochobj$, respectively. 
    The objective
    functions are evaluated on a regular grid of $15$ values equally spaced 
    in each direction.
    In the same plot we also display the
    progress of~\Cref{alg:REINFORCE_baseline} with various choices
    of the baseline $\baseline$. Specifically, we first set $\baseline$ to $0$
    (this corresponds to applying~\Cref{alg:REINFORCE}). 
    Next, we set the baseline to an empirically chosen value
    \begin{equation}\label{eqn:empirical_baseline}
      \baseline = \frac{\obj(\vec{0}) + \obj(\vec{1})}{2}\,,
    \end{equation}
    where $\vec{0}$ corresponds to turning all sensors off and $\vec{1}$
    corresponds to activating all sensors. 
    This gives an empirical estimate of
    the average value of the deterministic objective function $\obj$ and thus,
    in principle, may scale the gradient properly.
    We utilize the optimal baseline $\baseline\opt$ described
    in~\Cref{subsec:optimal_baseline}.
    In all cases, we set the learning rate to $0.25$.
    
    The value of the objective function $\stochobj$ evaluated at each iteration
    of the optimizer is shown
    in~\Cref{fig:REINFORCE_2dToy_optimiazation} (right).
    \begin{figure}[ht] \centering
      \includegraphics[align=c,width=0.60\linewidth]{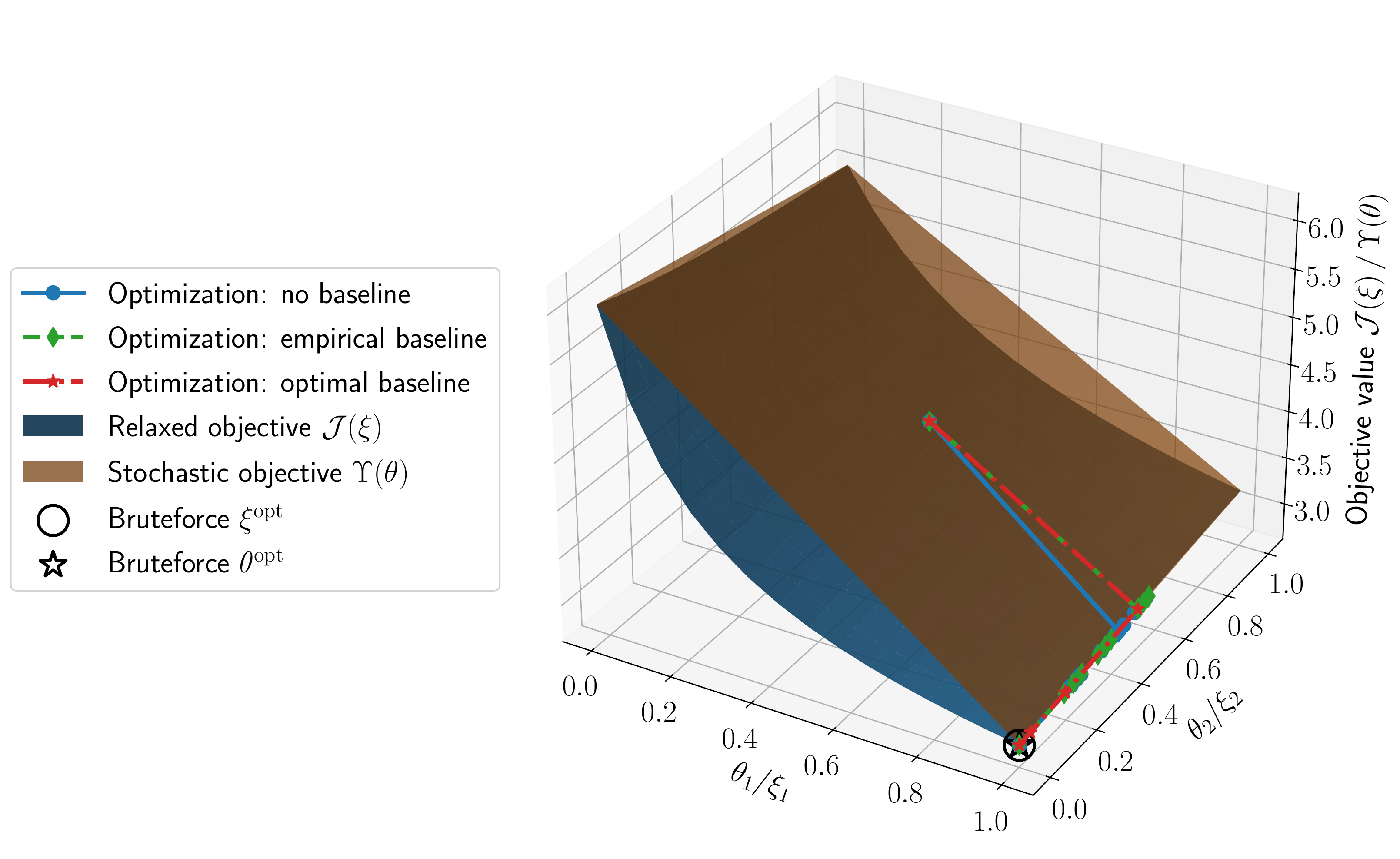}
      \hfill
      \includegraphics[align=c,width=0.35\linewidth]{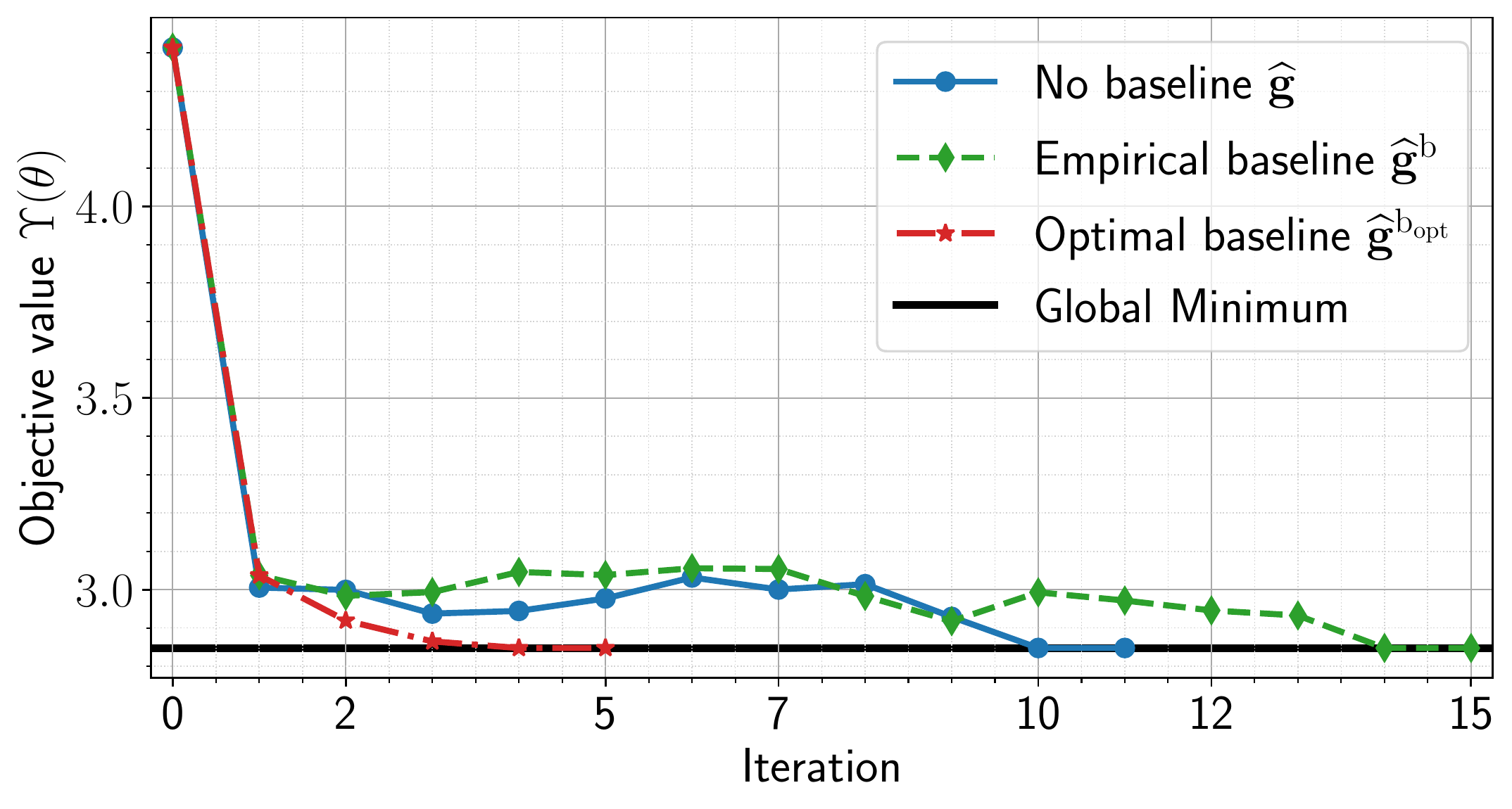} 
      \caption{
        Left: surface plot of the objective function $\obj$ of the relaxed OED
        problem and the objective function $\stochobj$ of the corresponding
        stochastic OED problem.  In each direction
        $15$ equally spaced points are taken  to create the
        surface plots.
        Iterations of the optimization algorithm are shown on the surface plot
        for various choices of the baseline $\baseline$.
        Right: value of the objective function $\stochobj$ evaluated at each
        iteration of the algorithm until convergence.
        Brute-force results are obtained by searching over all $4$ possible
        values of the binary design $\design\in\Omega_\design$.
    The initial parameter $\hyperparam^{(0)}$ of the optimizer is set to
        $(0.5,0.5)\tran$, and the algorithm terminates when the magnitude of the
        projected gradient (pgtol) is lower than $10^{-8}$.
      }
      \label{fig:REINFORCE_2dToy_optimiazation}
    \end{figure}
    We note that, as explained in~\Cref{subsec:benefits_of_stochastic_OED}, the
    values of $\obj(\design)$ and $\hyperparam$ coincide at the extremal points of the
    domain $[0,1]^{\Nsens}$.  
    Moreover, unlike the surface of the stochastic objective $\stochobj$, 
    the surface of the original objective function $\obj$, evaluated at 
    the relaxed design, flattens out for values of $\hyperparam_1$ greater than
    $0.5$. This behavior  makes applying a sparsification procedure challenging when associated 
    with traditional OED approaches.

    While the performance varies slightly based on the choice of the baseline $\baseline$,
    we note that, in general, the optimizer initially moves quickly toward a 
    lower-dimensional space corresponding to lower values of the objective function 
    $\stochobj$ and then moves slowly toward a local optimum. 
    We also note that since both
    candidate parameter values $(0,1)\tran$ and $(1, 1)\tran$ 
    have similar objective values, the optimizer
    moves slowly between them, since the value of the gradient in this direction
    is close to zero. If the optimizer is terminated before convergence (say
    after the first iteration where $\hyperparam_1$ is set to $1$ here), the
    returned value of $\hyperparam_2$ is in the interval $(0, 1)$, which allows
    sampling estimates of $\design_1\opt$ from $\{0, 1\}$, and the decision can
    be made based on the value of $\obj$ or other decisions, such as 
    budget constraints.
    Alternatively, one could modify~\eqref{eqn:oneD_Toy_objective} by adding a
    regularization term to enforce desired constraints. This will be explained
    further in~\Cref{subsec:advection_diffusion_results}.
    
    \Cref{fig:REINFORCE_2dToy_gradients} shows the gradient of the stochastic
    objective function $\stochobj$ evaluated (or approximated) at various 
    choices of $\hyperparam$.
    The top-left panel show results using the exact formulation of the 
    gradient~\eqref{eqn:exact_gradient}. The top-right panel shows the gradient
    evaluated  using~\eqref{eqn:kernel_stochastic_gradient}.
    The lower two panels show evaluations of the gradient
    using~\eqref{eqn:kernel_stochastic_gradient_baseline} with an empirical choice
    baseline~\eqref{eqn:empirical_baseline} and the estimate of the
    optimal baseline~\eqref{eqn:optimal_baseline_estimate}, respectively.
    These results show that the stochastic approximations of the gradient
    utilized in~\Cref{alg:REINFORCE} and~\Cref{alg:REINFORCE_baseline},
    better approximate the true gradient, given any realization of the
    parameter $\hyperparam$.
    However, the estimates with the baseline (both empirical choice and optimal estimate)
    exhibit much lower variability than the gradient evaluated without the baseline.
    This is also reflected by the performance of the optimization results
    in~\Cref{fig:REINFORCE_2dToy_optimiazation}.
    \begin{figure}[ht] \centering
      \includegraphics[width=0.60\linewidth]{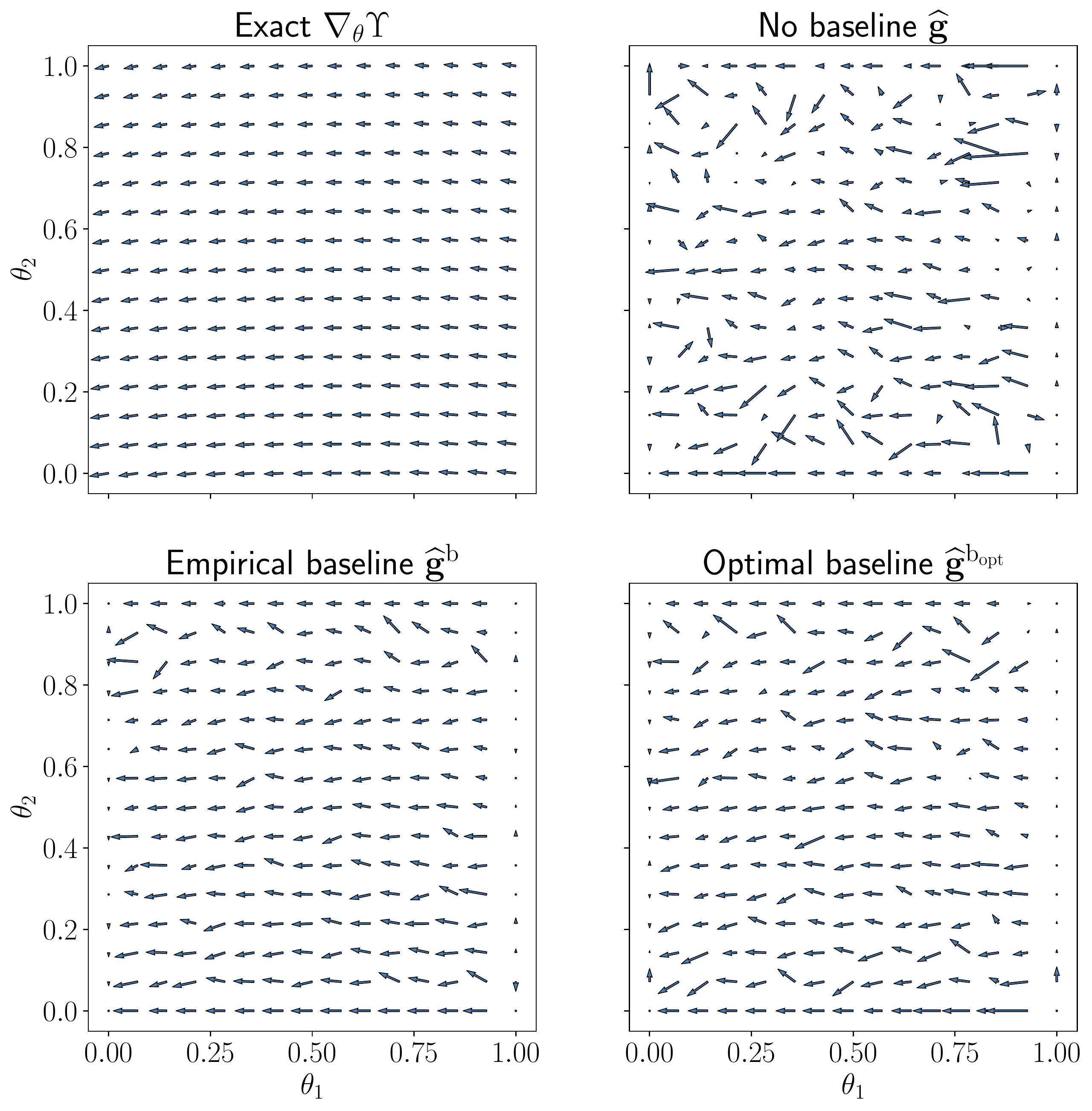}
      \caption{ 
        Evaluation of the gradient of the objective
        $\stochobj(\hyperparam)
          :=\Expect{\xi\sim\CondProb{\xi}{\hyperparam} }{\mathcal{J(\xi)}}
        $, where $\obj$ is defined by~\eqref{eqn:oneD_Toy}. 
        Gradients are evaluated (or approximated) at $15$ equally spaced points
        in each direction.
        Top-left: gradient is evaluated exactly
        using~\eqref{eqn:exact_gradient}.
        Top-right: gradient is approximated using~\eqref{eqn:kernel_stochastic_gradient}.
        Bottom-left: gradient is approximated
        using~\eqref{eqn:kernel_stochastic_gradient_baseline}, with $\baseline$ set
        to~\eqref{eqn:empirical_baseline}.
        Bottom-right: gradient is approximated
        using~\eqref{eqn:kernel_stochastic_gradient_baseline}, with $\baseline$
        evaluated using~\eqref{eqn:optimal_baseline_estimate}.
      }
      \label{fig:REINFORCE_2dToy_gradients}
    \end{figure}

  \subsection{Experimental setup for an advection-diffusion problem}
  \label{subsec:advection_diffusion_results}
    In this subsection we demonstrate the effectiveness of our proposed approach
    using an advection-diffusion model simulation that has been used extensively
    in the literature; see, for
    example,~\cite{PetraStadler11,attia2018goal,attia2020optimal} and references
    therein.

    The advection-diffusion model simulates the spatiotemporal evolution of a
    contaminant field $u=\xcont(\vec{x}, t)$ in a closed domain $\domain$.
    Given a set of candidate locations to deploy sensors to measure the
    contaminant concentration, we seek the optimal subset of sensors 
    that once deployed would enable inferring the initial distribution of the
    contaminant with minimum uncertainty.
    To this end, we seek the optimal subset of candidate sensors that minimized
    the A-optimality criterion, that is, the trace of the posterior covariance
    matrix.

    We carry out numerical experiments in two settings with varying
    complexities. Specifically, we start with a setup where only $14$ candidate
    sensor locations are considered inside the domain $\domain$.
    The number of possible combinations of active sensors in this case is
    $2^{14}=16,384$. 
    Despite being large, this allows us to carry out a brute-force search. 
    The purpose of the brute-force search here is to study the
    behavior of the proposed methodology and its capability in exploring the
    design space and utilizing any constraints properly, while seeking the optimal design. 
    In particular, we can compare the quality of our solution with the global
    minimum in this case.


    \paragraph{Model setup: advection-diffusion}
    In both sets of experiments, we use the same model setup.
    Specifically, the contaminant field $u = \xcont(\mathbf{x}, t)$ 
    is governed by the advection-diffusion equation 
      \begin{equation}\label{eqn:advection_diffusion}
        \begin{aligned}
          \xcont_t - \kappa \Delta \xcont + \vec{v} \cdot \nabla \xcont &= 0     
            \quad \text{in } \domain \times [0,T],   \\
          \xcont(x,\,0) &= \theta \quad \text{in } \domain,  \\
          \kappa \nabla \xcont \cdot \vec{n} &= 0  
            \quad \text{on } \partial \domain \times [0,T],
        \end{aligned}
    \end{equation}
    where $\kappa>0$ is the diffusivity, $T$ is the simulation final time, and
    $\vec{v}$ is the velocity field. 
    The spatial domain here is $\domain=[0, 1]^2$, with two rectangular regions
    inside the domain simulating two buildings where the flow is not
    allowed to enter.
    Here, $\partial \domain$ refers to the boundary of the domain, which includes 
    both the external boundary and the walls of the two buildings.
    The velocity field $\vec{v}$ is assumed to be known and is obtained by solving 
    a steady Navier--Stokes equation, with the side walls driving the flow;
    see~\cite{PetraStadler11} for further details.
    
    To create a synthetic simulation, we use the initial distribution of
    contaminant shown in~\Cref{fig:AD_Setup} (left) as the ground truth.
    \begin{figure}[ht]
      \centering
      \includegraphics[width=0.35\linewidth]{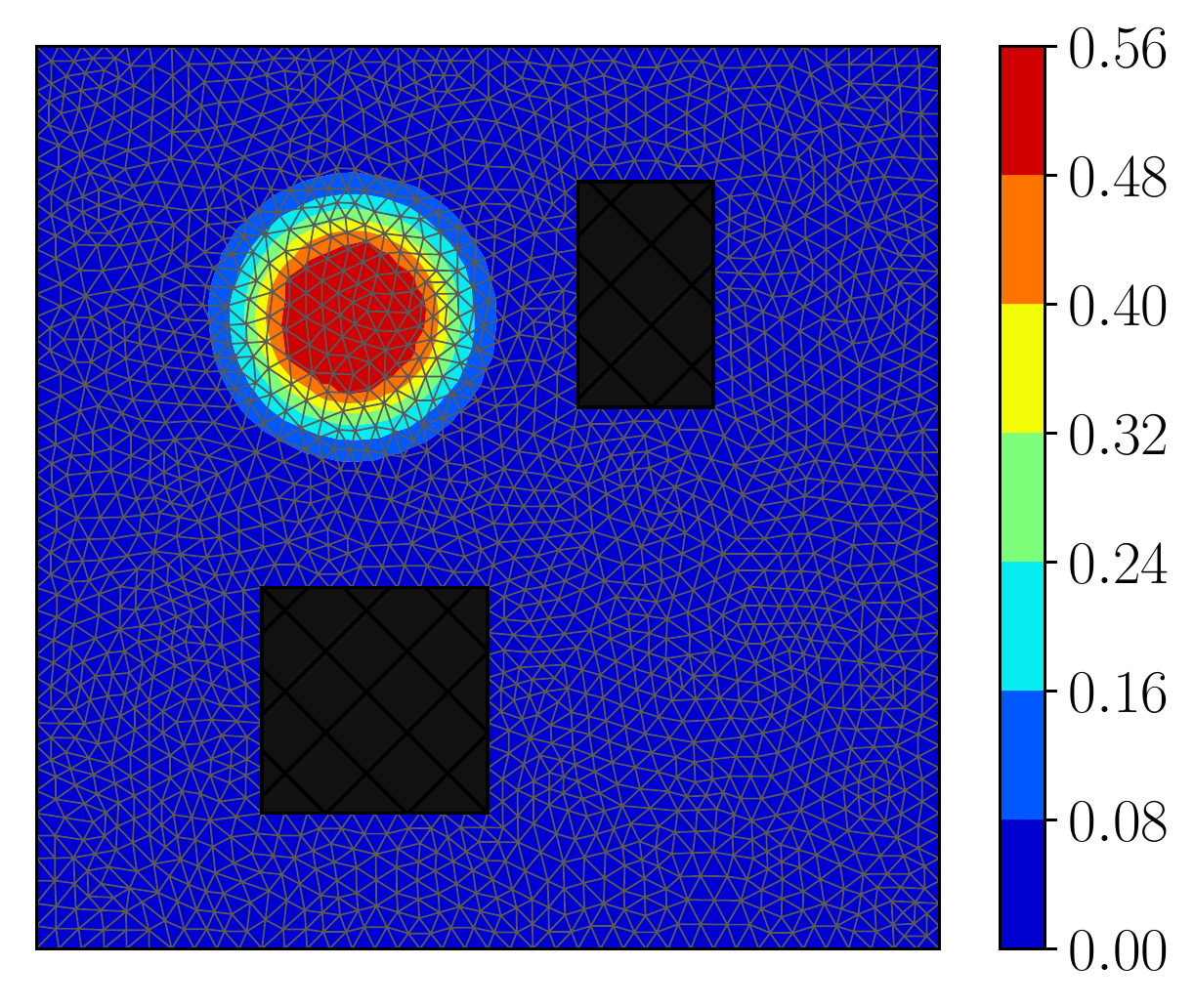}
      \quad
      \includegraphics[width=0.30\linewidth]{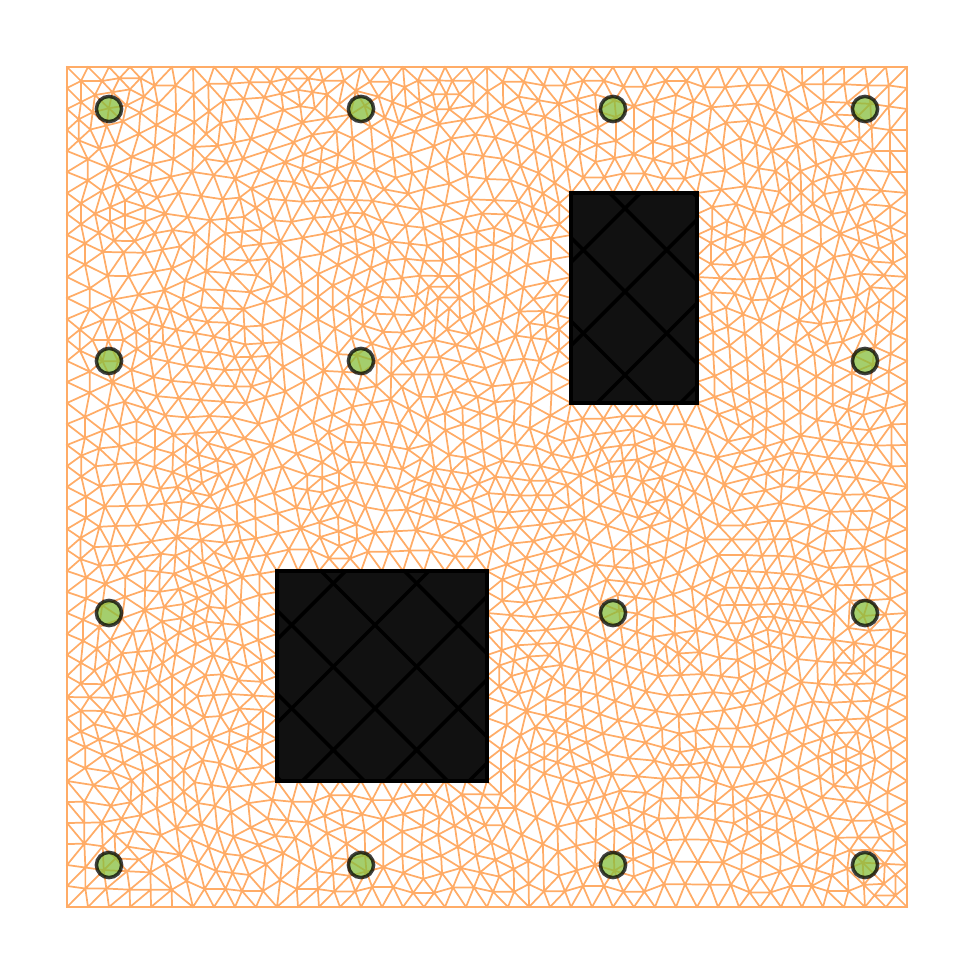}
      \caption{Advection-diffusion model domain, discretization, candidate
        sensor locations, and the true model parameter, in other words,
        the true initial condition. 
        Left: The physical domain $\domain$ including outer boundary 
        and the two buildings, the model grid discretization, and the
        true model parameter. 
        Right: Candidate observational sensor locations.
        }
      \label{fig:AD_Setup} 
    \end{figure}
    %

    \paragraph{Observational setup}
      We consider a set of uniformly distributed candidate sensor locations
      (spatial observational gridpoints).
      Specifically, we consider $\Nsens=14$ candidate sensor locations as described
      by~\Cref{fig:AD_Setup} (right), and we assume that the sensor locations do not
      change over time.
      An observation vector $\obs$ represents the concentration of the
      contaminant at the sensor locations, at a set of predefined time instances 
      $\{ t_1,\,  t_2,\, \ldots,\, t_{\nobstimes} \} \subset [0, T]$.
      The observation times are set to $t_1\!+\! s \Delta t$, with initial observation
      time $t_1\!=\!1$;  $\Delta t\!=\!0.2$ is the model simulation timestep;
      and $s=0, 1, \ldots, 20$. The result is $\nobstimes=16$ observation time
      instances, over the simulation window $[0,\, T\!=\!4]$.
      The dimension of the observation space is thus
      $\Nobs = \Nsens \times \nobstimes$. 
      
      The observation error distribution is $\GM{\vec{0}}{\Cobsnoise}$, with
      $\Cobsnoise\in \Rnum^{\Nobs\times\Nobs}$ describing spatiotemporal
      correlations of observational errors.
      We assume that observation errors are time-invariant and are calculated as
      follows.
      For simplicity, we assume that observation errors are uncorrelated, 
      with fixed standard deviation; that is, the observation error covariance 
      matrix takes the form $\Cobsnoise=\sigma_{\rm obs}^2 \mat{I}$, where
      $\mat{I}\in\Rnum^{\Nobs\times\Nobs}$ is the identity matrix.
      Here, we set the observation error variances to 
      $\sigma_{\rm obs}\!=\! 2.482\!\times\! 10^{-2}$.
      This specific value is obtained by considering a noise level 
      of $5\%$ of the maximum value of the contaminant concentration captured 
      at all observation points, by running a simulation over $[0, T]$, using
      the ground truth of the model parameter; see~\Cref{fig:AD_Setup} (left).

    \paragraph{Forward operator, adjoint operator, and the prior}
      The forward operator $\F$ maps the model parameter $\iparam$, here the
      model initial condition, to the observation space.
      Specifically, $\F$ represents a forward simulation over the interval $[0,
      T]$ followed by applying an observation operator (here, a restriction
      operator), to extract concentrations at sensor locations at observation
      time instances.
      The forward operator here is linear, and the adjoint is defined by using the 
      Euclidean inner product weighted by the finite-element mass matrix
      $\mat{M}$ as $\F\adj := \mat{M}\inv\mat{F}\tran$; 
      see~\cite{Bui-ThanhGhattasMartinEtAl13} for further details.

      The prior distribution of the parameter $\iparam$ is
      modeled by a Gaussian distribution $\GM{\iparb}{\Cparampriormat}$, 
      where $\Cparampriormat$ is a discretization of $\mathcal{A}^{-2}$,
      with $\mathcal{A}$ being a Laplacian (following~\cite{Bui-ThanhGhattasMartinEtAl13}).

    \subsection{The OED optimization problem}
      Now we define the design space and formulate the OED optimization problem.
      To find the best subset of candidate sensor locations, we assign a binary
      design variable $\design_i$ to each candidate sensor location $x_i$, where
      $i=1,2,\ldots,\Nsens$, and hence $\design\in\{0,1\}^{\Nsens}$.
      We aim to find a binary A-optimal design, that is, the minimizer 
      of the trace of the posterior covariance matrix. Moreover, 
      to promote sparsity of the design, we employ an $\ell_0$  penalty
      term $\Phi$.
      We thus define the objective function $\obj$ for this problem as
      \begin{equation}\label{eqn:AD_obj}
        \obj(\design) = \Trace{
          \left( \mat{M}\inv\mat{F}\tran 
            \Cobsnoise^{-1/2} \diag{\design} \Cobsnoise^{-1/2}
            \F
            + \Cparampriormat^{-1} \right)\inv
          }
            + \regpenalty \Phi(\design) \,, 
      \end{equation}
      where $\regpenalty$ is the user-defined penalty parameter.
      This parameter controls the level of sparsity that we desire to impose on
      the design.
      Specifically, we set $\Phi(\design):=\wnorm{\design}{0}$ to impose sparsity.
      On the other hand, if we have a specific budget $\budget$, it would be
      more reasonable to define the penalty function as
      $ 
      \Phi(\design):=\regpenalty \abs{\wnorm{\design}{0}-\budget}
        = \regpenalty \abs{ \sum_{i=1}^{\Nsens}{\design_i} -\budget}
      $.
      We will discuss these two cases in the following and in 
      the numerical experiments.
      The stochastic optimization
      problem~\eqref{eqn:stochastic_optimization} is formulated given the
      definition of $\obj$ in ~\eqref{eqn:AD_obj} as 
      \begin{equation}\label{eqn:AD_stochastic_optimization}
        \hyperparam\opt 
        = \argmin_{\hyperparam \in [0, 1]^{\Nsens} } 
          \Expect{\design\sim\CondProb{\design}{\hyperparam}}{
          \Trace{
          \left( \mat{M}\inv\mat{F}\tran 
            \Cobsnoise^{-1/2} \diag{\design} \Cobsnoise^{-1/2}
            \F
            + \Cparampriormat^{-1} \right)\inv
          }
            + \regpenalty \Phi(\design) 
          } 
       \,,
      \end{equation}
      where $\CondProb{\design}{\hyperparam}$ is the multivariate Bernoulli
      distribution with PMF given by~\eqref{eqn:joint_Bernoulli_pmf_prod}.

    \subsection{Numerical results with advection-diffusion model}
      \label{subsec:AD_Results_set1}
      The main goal of this set of experiments is to study the behavior of the
      proposed~\Cref{alg:REINFORCE_baseline} compared with  the global solution
      of~\eqref{eqn:AD_stochastic_optimization}.
      
      Solution by enumeration (brute-force) is carried out for all
      $2^{14}=16,384$ possible designs, and the corresponding value of $\obj$ is recorded 
      to identify the global solution of~\eqref{eqn:AD_stochastic_optimization}.
      In addition, we run~\Cref{alg:REINFORCE_baseline} 
      with the maximum number of iterations set to $20$. 
      We choose this tight number to test the performance of the 
      stochastic optimization algorithm upon early termination.
      We choose the learning rate $\eta = 0.25$ and set 
      the gradient tolerance \textsc{pgtol} to $10^{-8}$.
      Each sensor is equipped with an initial probability $0.5$. This is employed
      by choosing the initial parameter $\hyperparam^{(0)}$ of the stochastic
      optimization algorithm to 
      $\hyperparam^{(0)}=(0.5, 0.5, \ldots, 0.5)\tran$.
      In all experiments, we set the batch size for estimating the
      stochastic gradient to $32$ and the
      number of epochs for the optimal baseline to $10$. 
      
      The optimization algorithm returns samples from the multivariate Bernoulli
      distribution associated with the parameter $\hyperparam$ at the final
      step. Then, it picks $\design\opt$ as the sampled design associated with the
      smallest value of $\obj$.
      We assume that the optimization procedure samples $10$ designs
      upon termination, from the final distribution.
      Note that all samples will be identical if the probability distribution is 
      degenerate. 
      In the numerical results discussed next, we show not only the
      final optimal design returned by the optimization procedure but also the
      sampled designs. 

      \subsubsection{Results without penalty term}
      \label{subsubsec:AD_Results_NoPenalty}
        We start with numerical results obtained by setting the penalty
        parameter $\regpenalty=0$.
        \Cref{fig:AD_Set1_REINFORCE_NoPenalty_NoBaseline} shows the results of the
        brute-force search, along with results returned by~\Cref{alg:REINFORCE}.
        Specifically, in~\Cref{fig:AD_Set1_REINFORCE_NoPenalty_NoBaseline} (left),
        the value of $\obj:=\Trace{\Cparampost(\design)}$ is evaluated at each
        possible binary design $\design$ and is shown on the y-axis. 
        Candidate binary designs are grouped on the x-axis by the number of entries 
        set to $1$, that is, the number of active sensors. 
        In this setup, we have access to the value of $\obj$ corresponding to
        all possible designs, and thus we can in fact evaluate
        $\stochobj(\hyperparam) \equiv \Expect{\design\sim
        \CondProb{\design}{\hyperparam}}{\obj(\design)}$ exactly, for any choice
        of the parameter $\hyperparam$. Of course, such an action is impossible in
        practice; however, we are interested in understanding the behavior of
        the optimization algorithm. 
        \Cref{fig:AD_Set1_REINFORCE_NoPenalty_NoBaseline} (right) shows the value
        of $\stochobj$ evaluated at the $k$th step
        of~\Cref{alg:REINFORCE}. 
        %
        \begin{figure}[ht] \centering
          \includegraphics[align=c,width=0.55\linewidth]{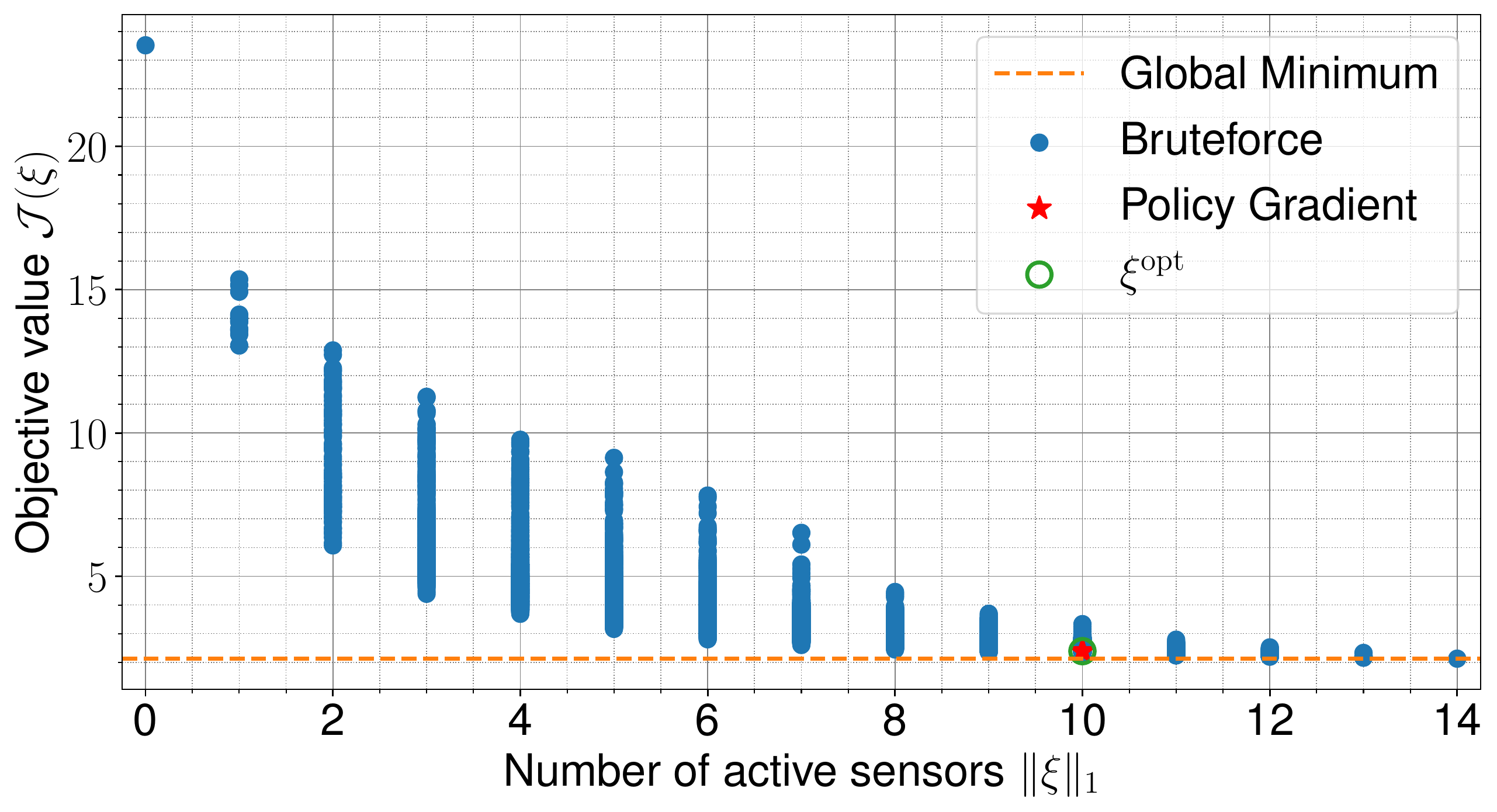}
          \quad
          \includegraphics[align=c,width=0.40\linewidth]{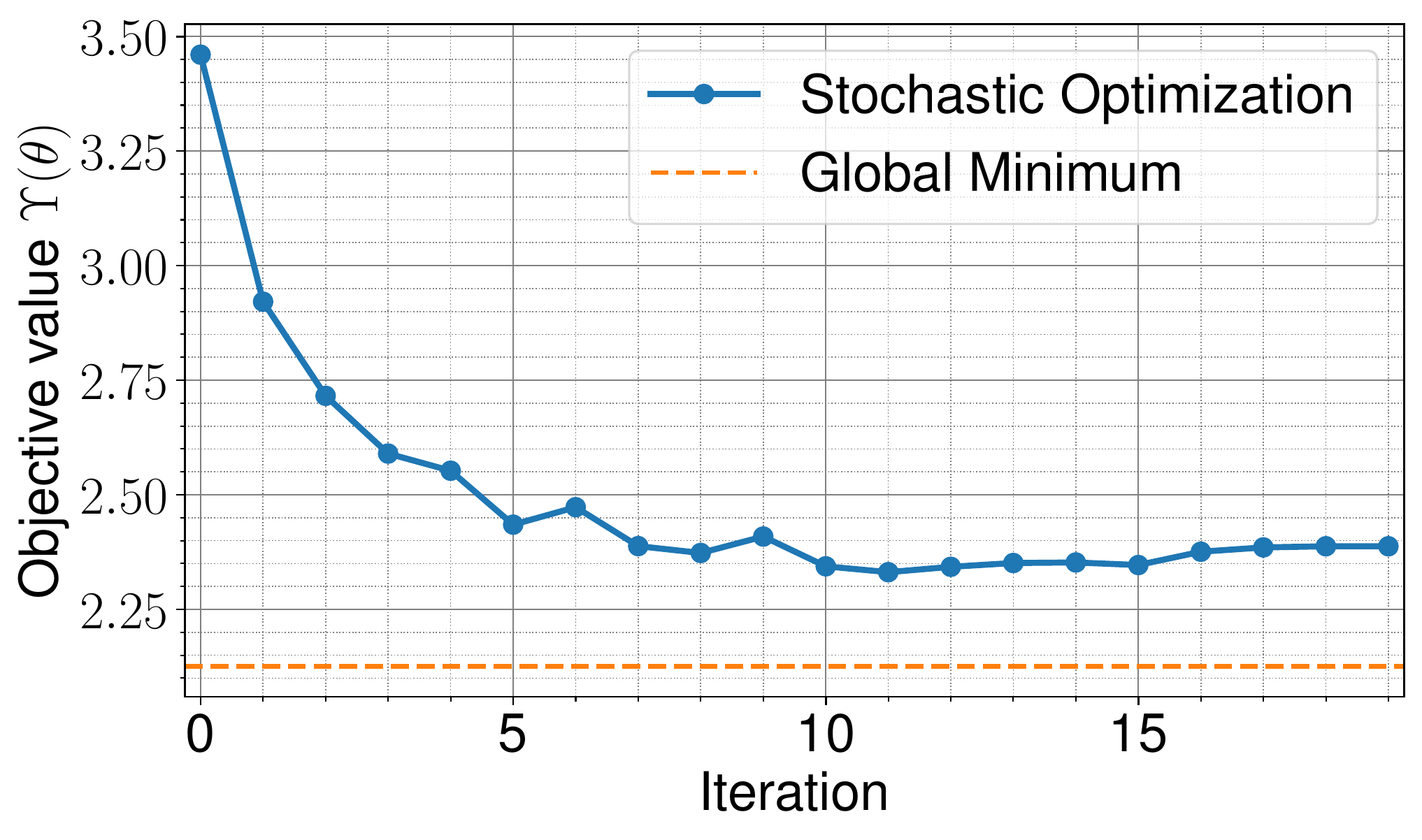}

          \caption{Results of the policy gradient~\Cref{alg:REINFORCE} compared with brute-force
            search of all candidate binary designs.
            No penalty is used here; that is, we set the penalty parameters
            $\regpenalty=0$. 
            Left:
            candidate designs are grouped by the number of active sensors, on the
            x-axis, with the corresponding value of $\obj$ displayed on the y-axis.
            Brute-force results are shown as blue dots.
            The results of~\autoref{algstep:REINFORCE:sampling}
            of~\autoref{alg:REINFORCE} with $m=10$ are shown as red
            stars, and the optimal solution returned from the algorithm is shown as a
            green circle.
            Right: The value of the stochastic objective
            $\stochobj(\hyperparam)$ evaluated at the $\hyperparam^{(k)}$ at
            each iteration $k$ of~\Cref{alg:REINFORCE}. 
          }
          \label{fig:AD_Set1_REINFORCE_NoPenalty_NoBaseline}
        \end{figure}
        
        In this setup, without any constraints on the number of sensors,
        the global optimal minimum is attained by
        $\design\opt=\vec{1}\in\Rnum^{\Nsens}$, that is, by activating all
        sensors.
        However, we note that increasing the number of sensors, say more 
        than $8$, would add little to information gain from data. 
        The reason is the similarity of the values 
        of $\obj$ for all designs with more than $8$ active sensors.
        The designs sampled from the final distribution obtained
        by~\Cref{alg:REINFORCE} are marked as red stars, which in this case are
        identical, showing that the final probability distribution is
        degenerate.
        The algorithm moves quickly toward a local minimum, but it fails to
        explore the space near the global optimum.
        This action is expected because of sampling error and the high
        variability of the estimator.
        As discussed in~\ref{subsec:variance_reduction_baseline}, improvements
        could be achieved by incorporating baseline in the objective function.
        
        In~\Cref{fig:AD_Set1_REINFORCE_NoPenalty_Baseline}, we show results
        obtained by introducing baseline $\baseline$ to the stochastic gradient
        estimator, as described by~\Cref{alg:REINFORCE_baseline}.
        We show results with both the heuristic baseline 
        estimate~\eqref{eqn:empirical_baseline}~
        (\Cref{fig:AD_Set1_REINFORCE_NoPenalty_Baseline} (top)) and 
        the optimal baseline estimate~\eqref{eqn:optimal_baseline}~
        (\Cref{fig:AD_Set1_REINFORCE_NoPenalty_Baseline} (bottom)).
        Both~\Cref{alg:REINFORCE} and~\Cref{alg:REINFORCE_baseline} result in
        probability distributions (defined by $\hyperparam$) associated with
        small values.
        However,~\Cref{alg:REINFORCE_baseline} with the optimal
        baseline~\eqref{eqn:optimal_baseline} outperforms
        both~\Cref{alg:REINFORCE}, and~\Cref{alg:REINFORCE_baseline} with
        the heuristic baseline~\eqref{eqn:empirical_baseline} 
        and generates designs with significantly smaller objective values.
        Specifically, as shown in~\Cref{fig:AD_Set1_REINFORCE_NoPenalty_Baseline} (bottom), 
        the objective value $\obj$ evaluated at the designs generated 
        by~\Cref{alg:REINFORCE_baseline} are all similar and fall within $1\%$ 
        of the global optimum.
        \begin{figure}[ht] \centering
          \includegraphics[align=c,width=0.55\linewidth]{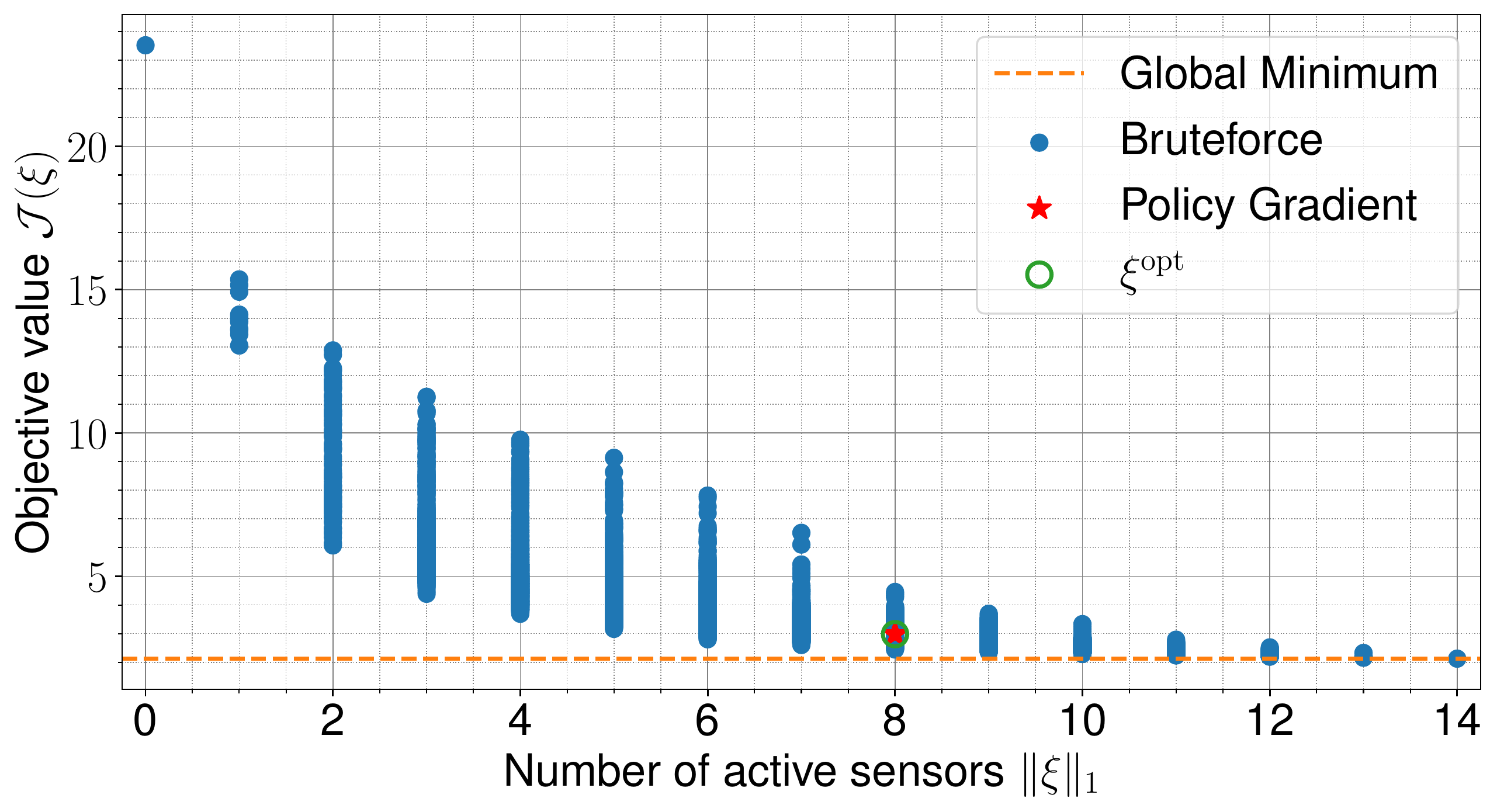}
          \quad
          \includegraphics[align=c,width=0.40\linewidth]{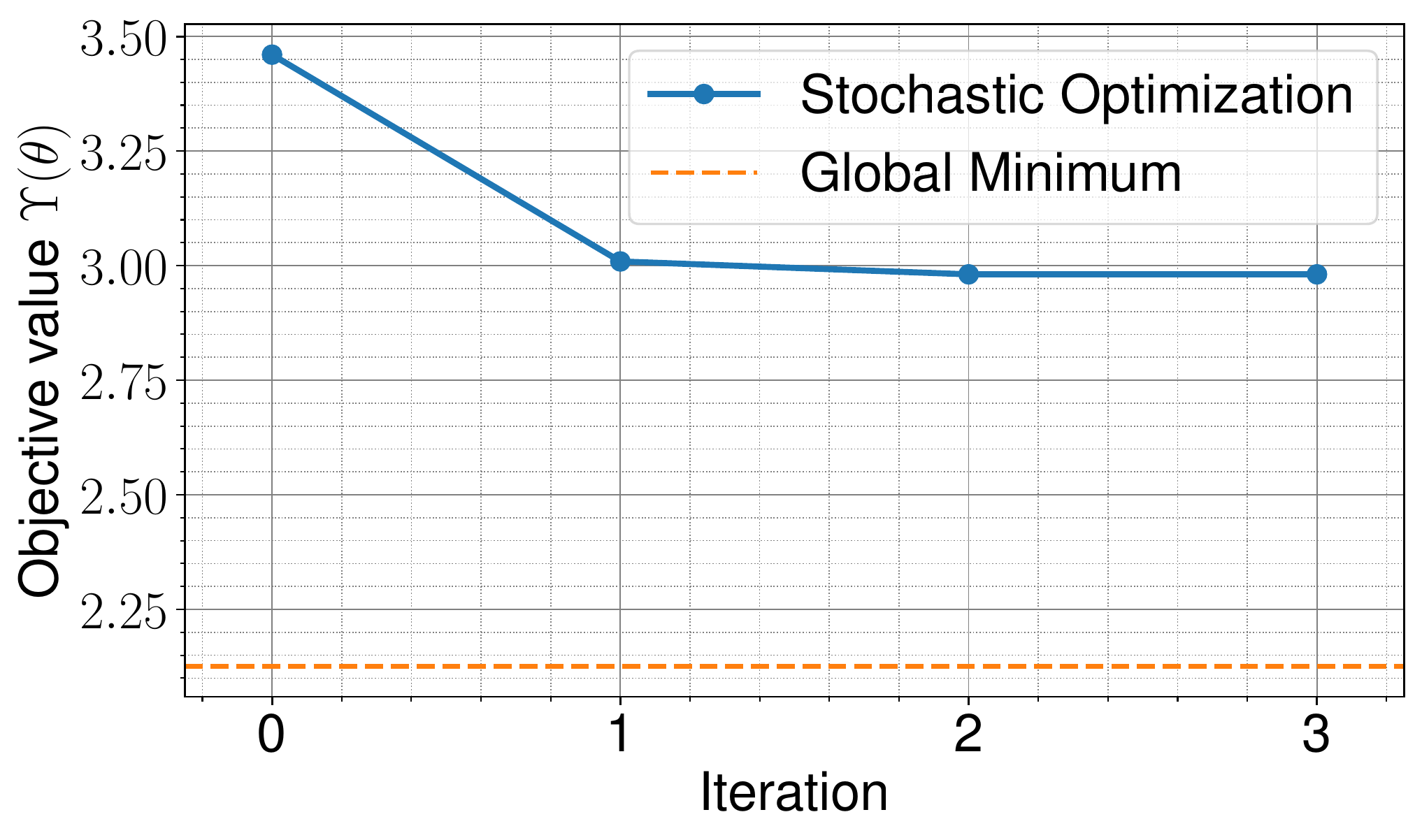}
          \includegraphics[align=c,width=0.55\linewidth]{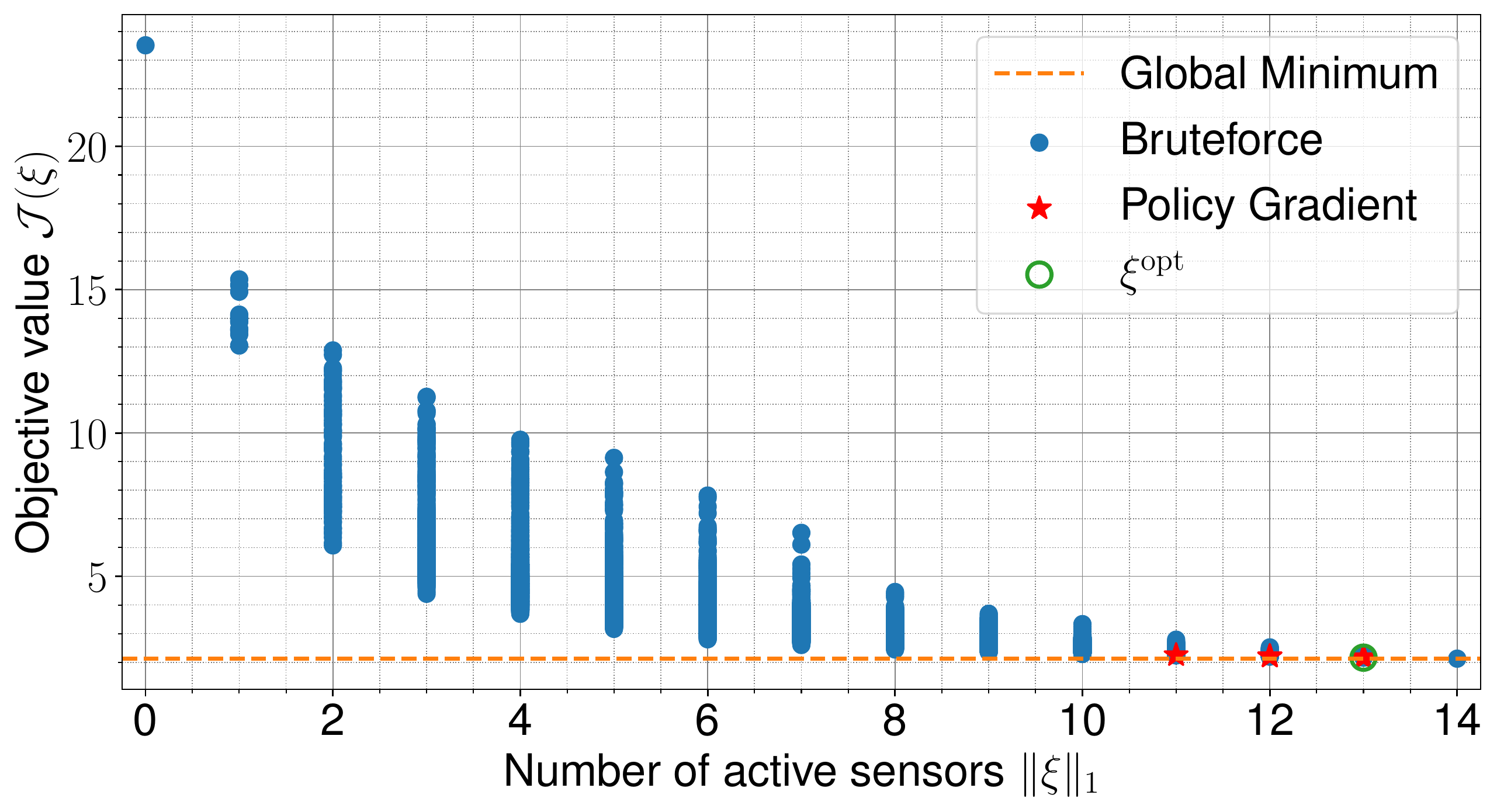}
          \quad
          \includegraphics[align=c,width=0.40\linewidth]{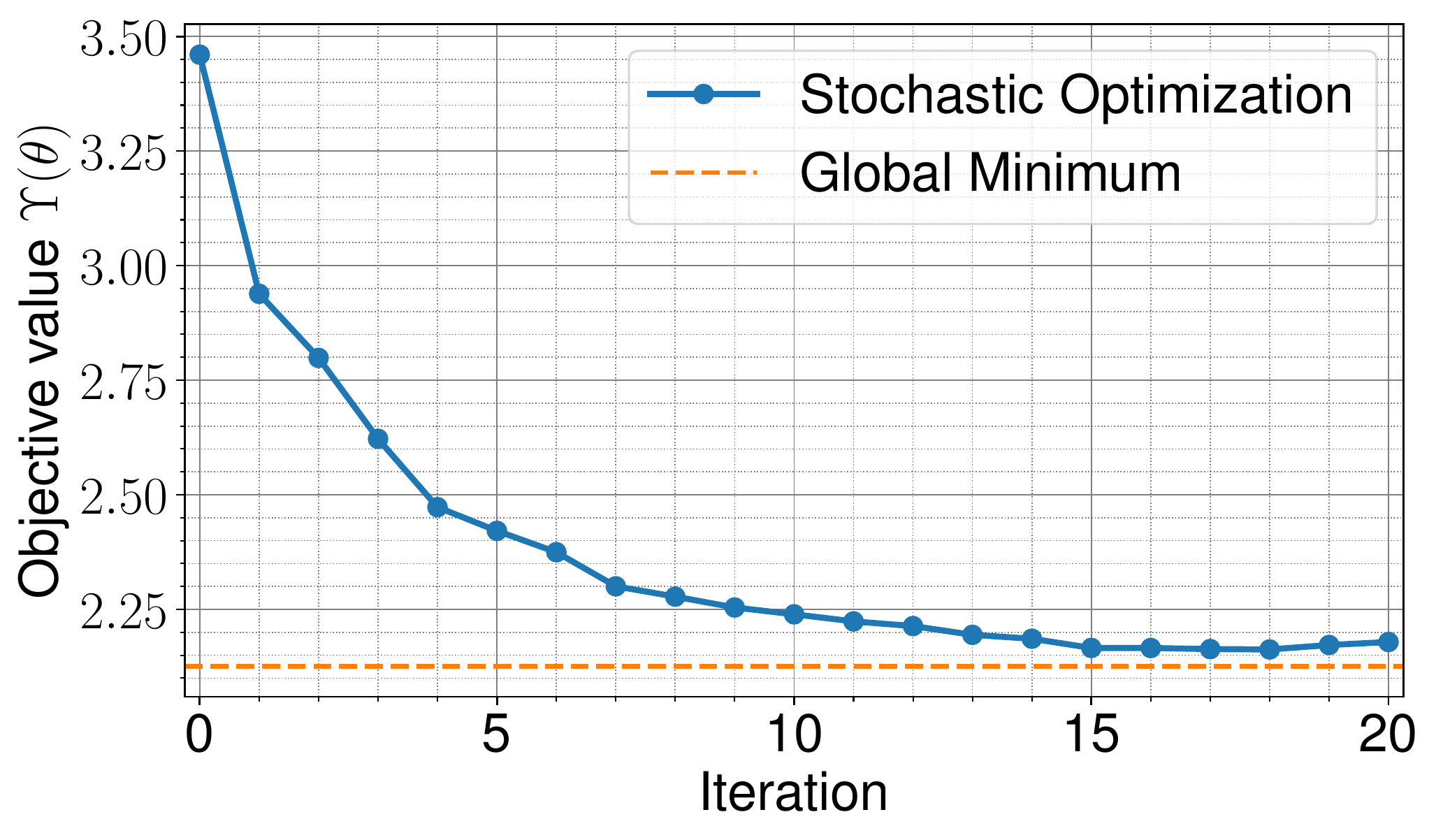}

          \caption{Same as~\Cref{fig:AD_Set1_REINFORCE_NoPenalty_NoBaseline}.
            Here, we show results of~\Cref{alg:REINFORCE_baseline}, that is,
            stochastic optimization with the baseline.
            The top panels show results with the heuristic
            baseline estimate~\eqref{eqn:empirical_baseline}.
            The bottom panels show results with the 
            optimal baseline estimate~\eqref{eqn:optimal_baseline}. 
          }
          \label{fig:AD_Set1_REINFORCE_NoPenalty_Baseline}
        \end{figure}

        Evaluating the optimal baseline estimate, however, requires additional
        evaluations of $\obj$.
        We monitor the number of additional evaluations
        of $\obj$ carried out at each iteration of the optimizer.
        Note that we keep track of the values of $\obj$ for each sampled design
        $\design$ during the course of the algorithm. By doing so, we avoid any
        computational redundancy due to recalculating the objective function
        multiple times for the same design.
        \Cref{fig:AD_Set1_REINFORCE_NoPenalty_function_evaluations} shows the
        number of new function evaluations carried out at each step of the
        optimization algorithm.
        \begin{figure}[ht] \centering
          \includegraphics[width=0.32\linewidth]{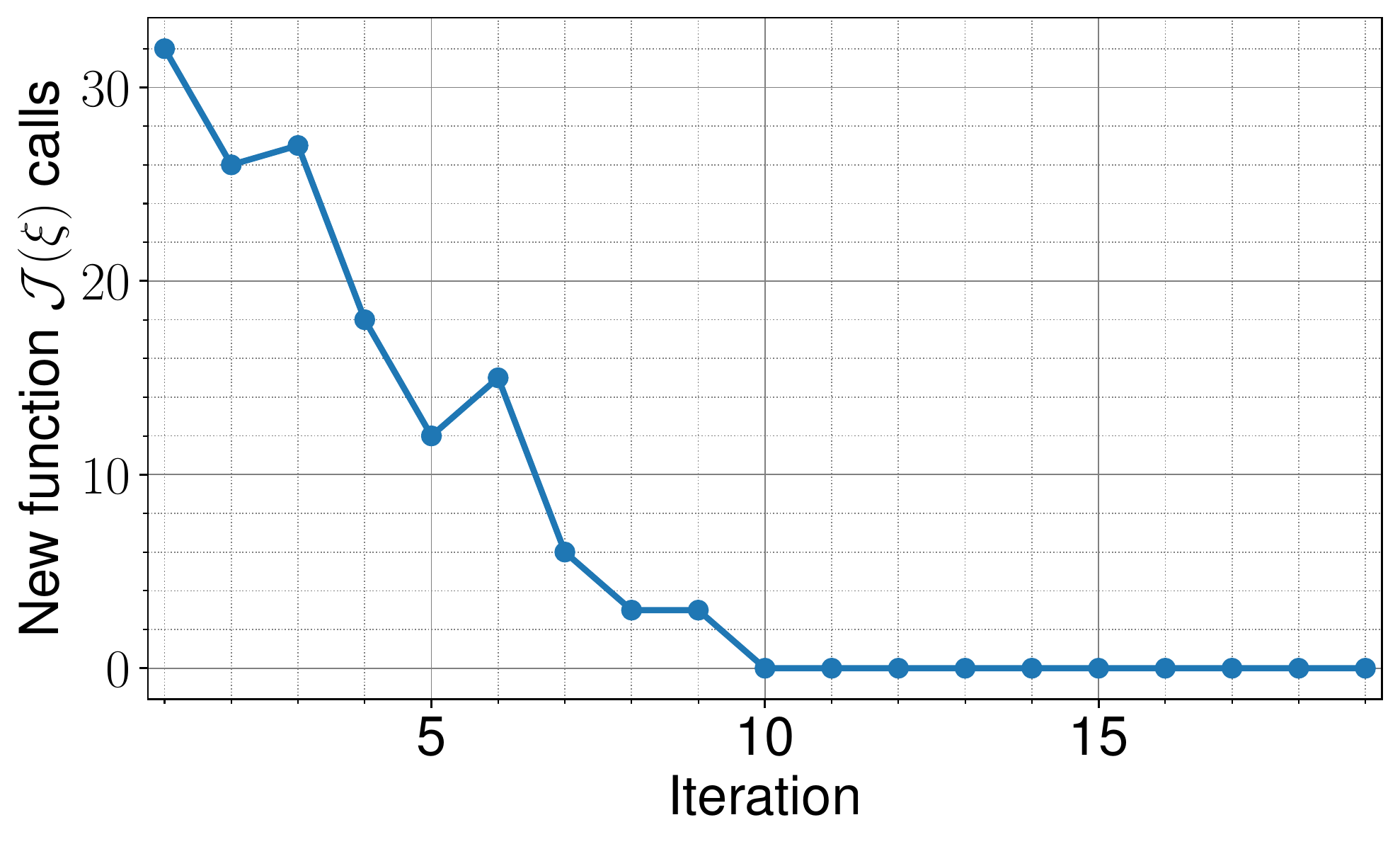}
          \includegraphics[width=0.32\linewidth]{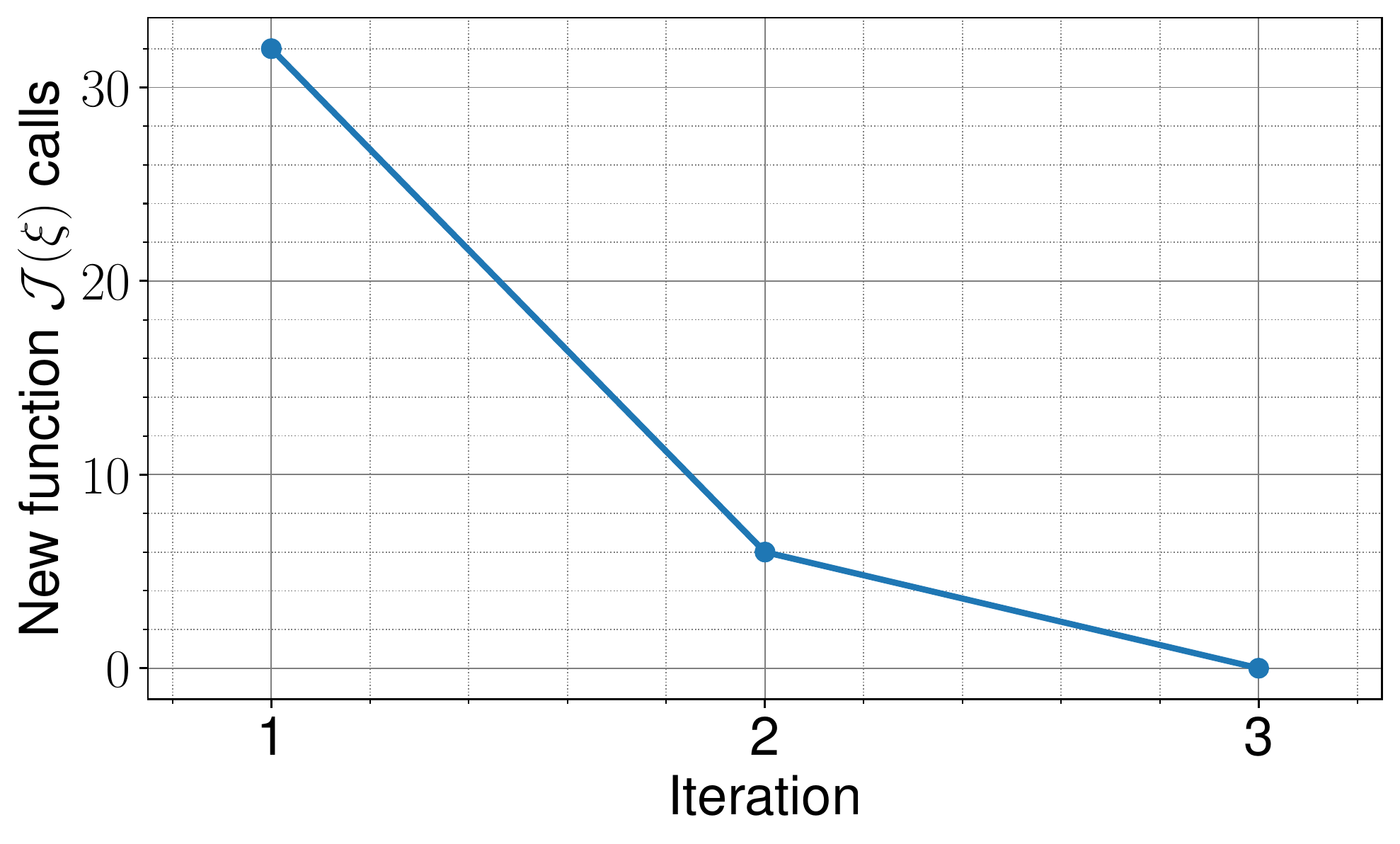}
          \includegraphics[width=0.32\linewidth]{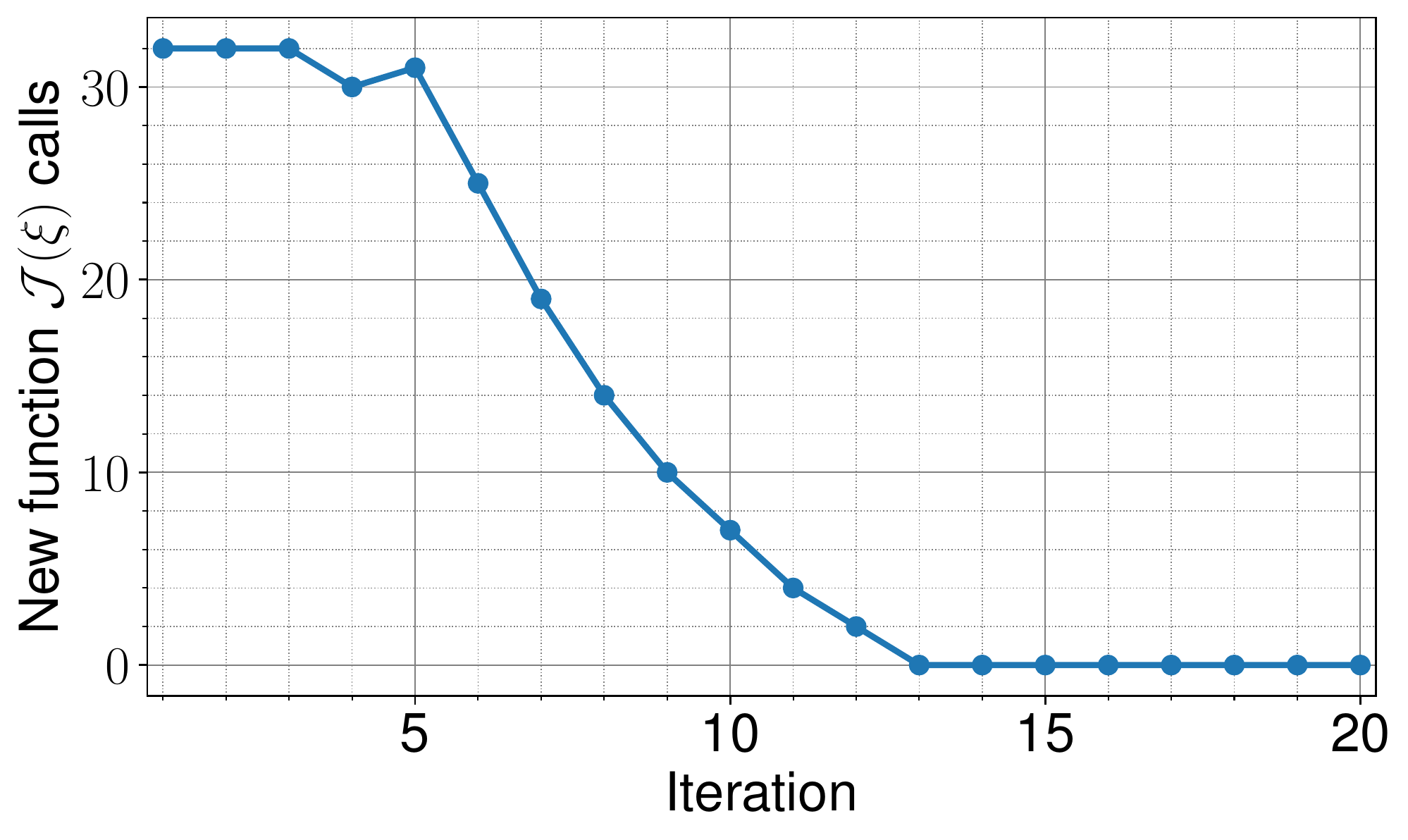}

          \caption{Number of new function evaluations carried out by the
          stochastic optimization algorithm.
          Left: results of~\Cref{alg:REINFORCE}. 
          Middle: results of~\Cref{alg:REINFORCE_baseline} with 
          the heuristic baseline estimate~\eqref{eqn:empirical_baseline}.
          Right: results of~\Cref{alg:REINFORCE_baseline} with the 
            optimal baseline estimate~\eqref{eqn:optimal_baseline}.
          }
          \label{fig:AD_Set1_REINFORCE_NoPenalty_function_evaluations}
        \end{figure}
        
        \Cref{fig:AD_Set1_REINFORCE_NoPenalty_function_evaluations} (left) 
        suggests that, by using the heuristic baseline~\eqref{eqn:empirical_baseline}, 
        the optimization algorithm converges quickly to a suboptimal probability 
        space and does not require many additional function evaluations. 
        A smaller step size $\eta$, in this case, might result in better performance.
        Conversely, comparing results in both
        \Cref{fig:AD_Set1_REINFORCE_NoPenalty_function_evaluations} (left) and
        \Cref{fig:AD_Set1_REINFORCE_NoPenalty_function_evaluations} (right),
        we notice that the computational cost, explained by the number of function
        evaluations, is not significantly different, especially after the first
        few iterations.
      
      \subsubsection{Results with sparsity constraint}
      \label{subsubsec:AD_Results_SprsityConstraint}
        To study the behavior of the optimization procedures in the presence 
        $\ell_0$ sparsity constraints, we set the penalty function  to 
        $\Phi(\design):=\wnorm{\design}{0}$
        and  the regularization penalty parameter to $\regpenalty=1.0$.
        Here we do not concern ourselves with the choice of $\regpenalty$, and we
        leave it for the user to tune based on the application at hand and the
        required level of sparsity.
        Results are shown in~\Cref{fig:AD_Set1_REINFORCE_Penalty_1.0} and
        ~\Cref{fig:AD_Set1_REINFORCE_function_evaluations_Penalty_1.0}, respectively. 
        For clarity we omit results obtained by the heuristic baseline.
        \begin{figure}[ht] \centering
          \includegraphics[align=c,width=0.55\linewidth]{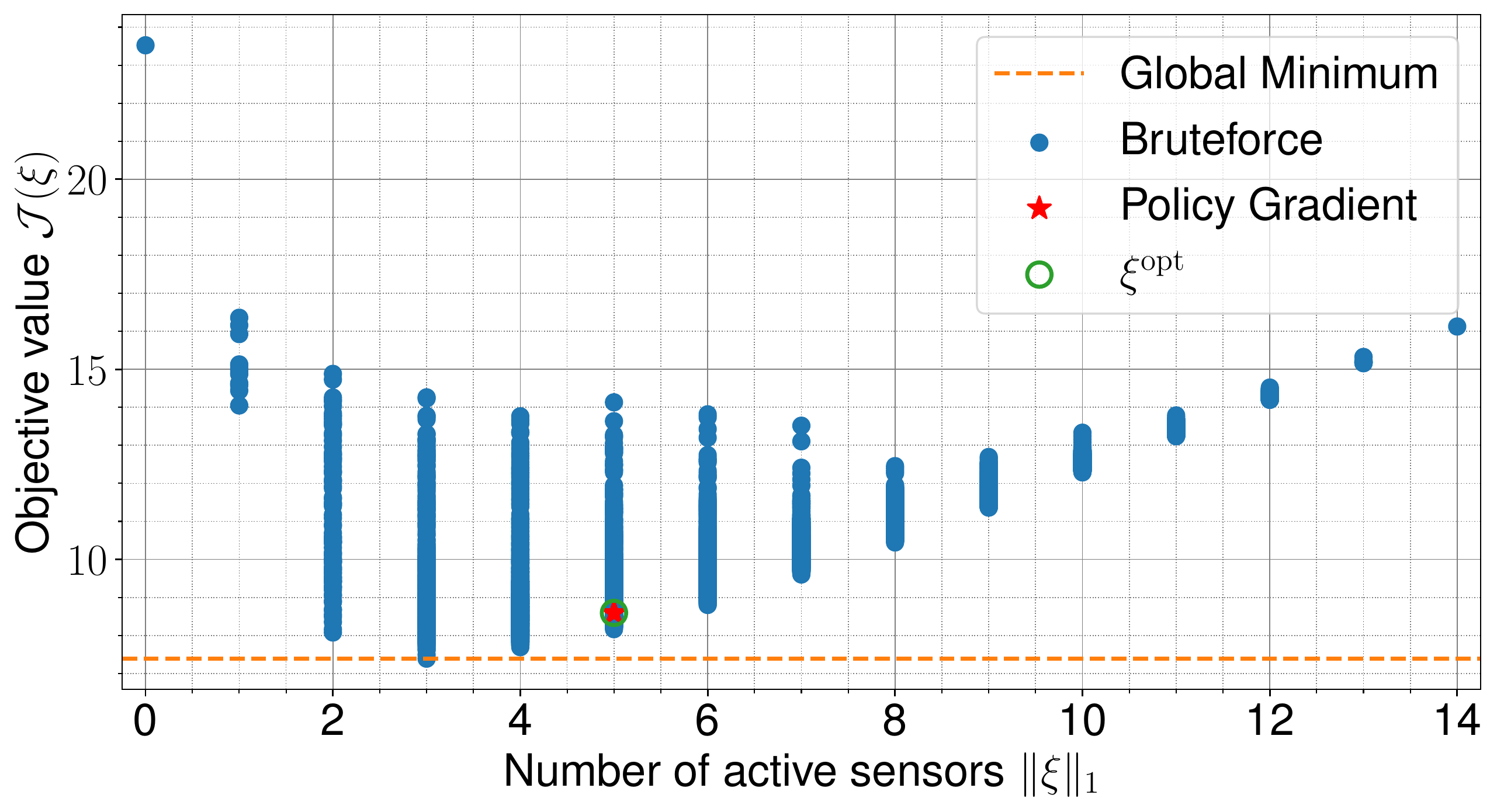}
          \quad
          \includegraphics[align=c,width=0.40\linewidth]{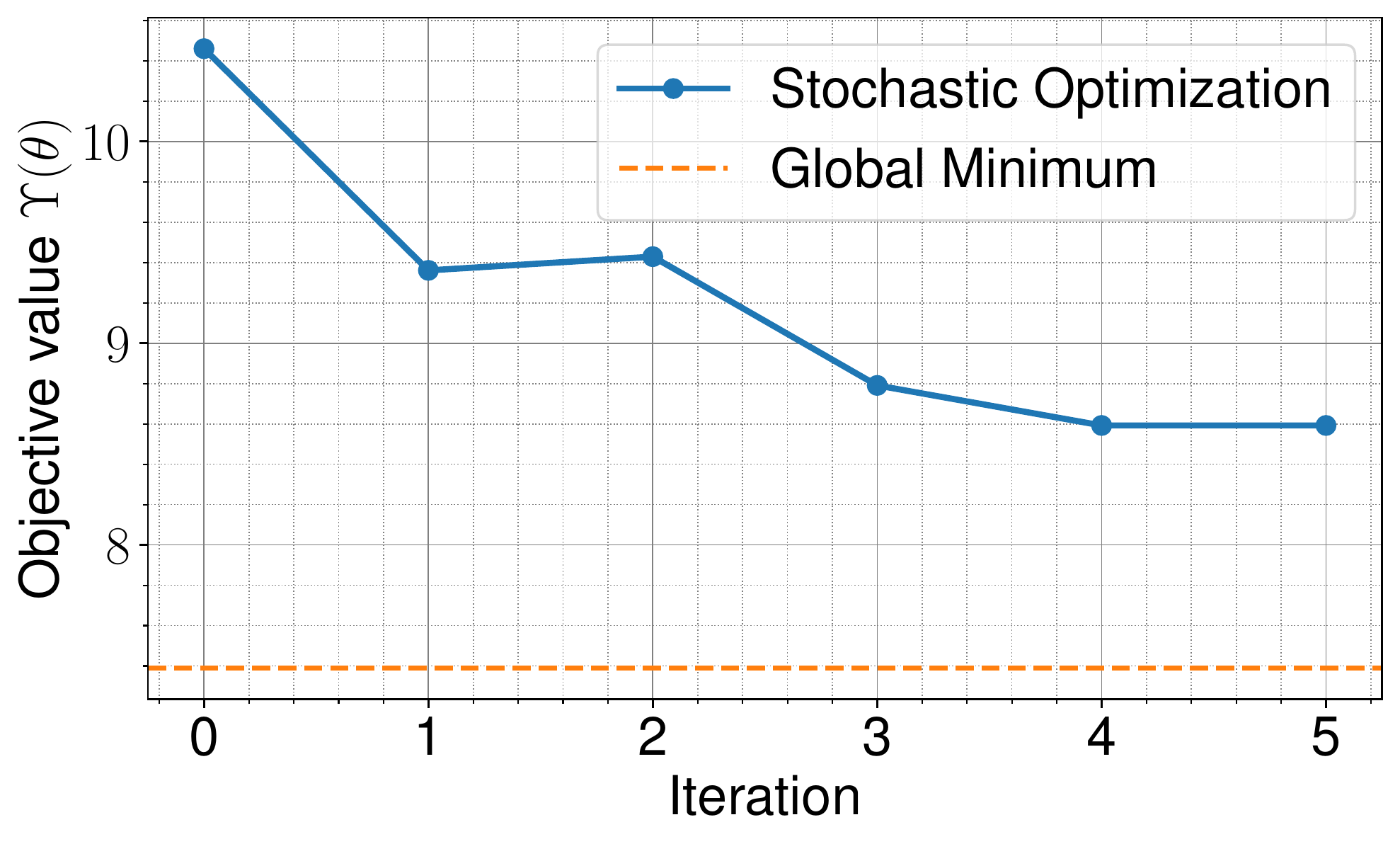}
          %
          %
          \includegraphics[align=c,width=0.55\linewidth]{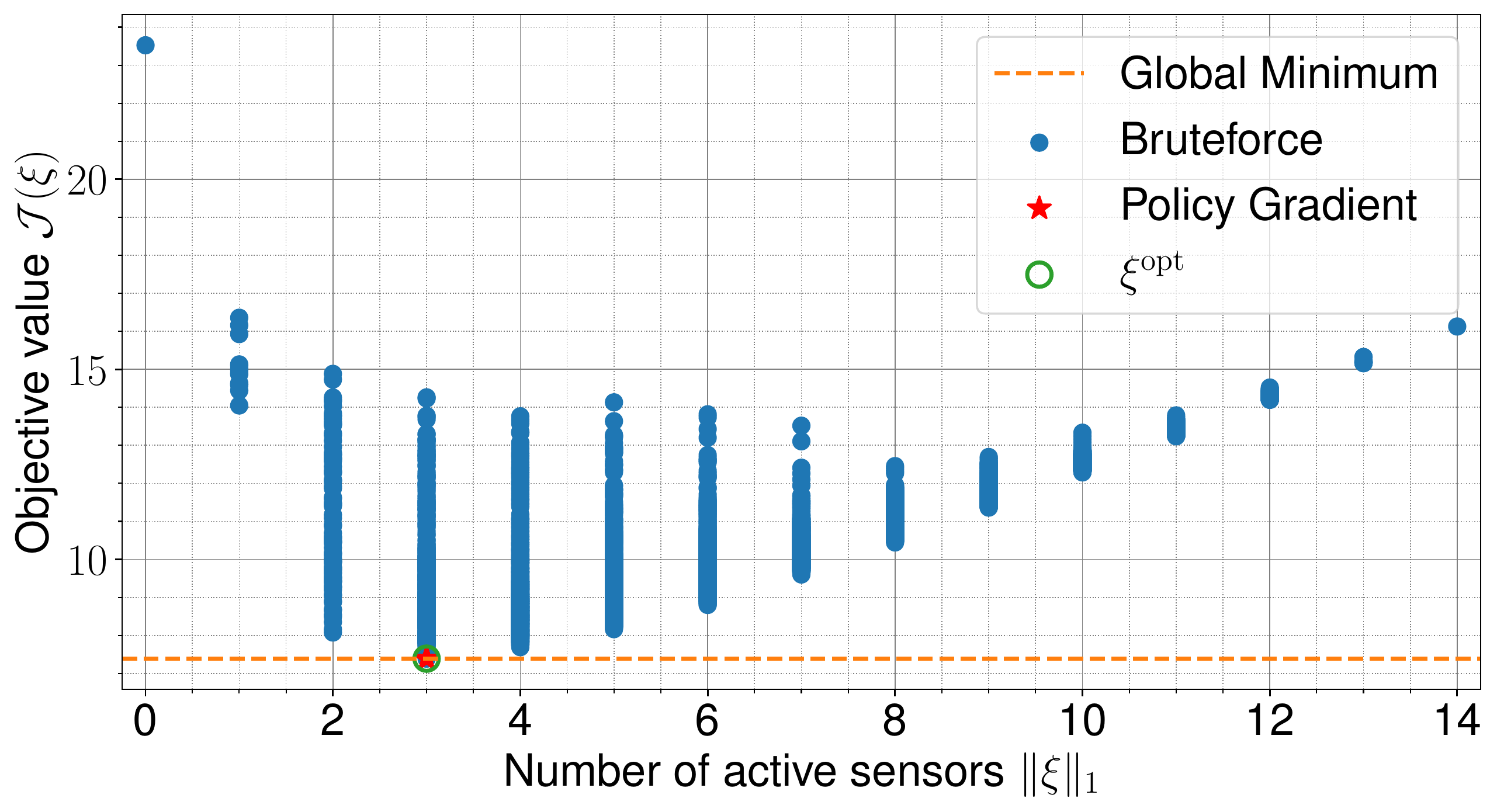}
          \quad
          \includegraphics[align=c,width=0.40\linewidth]{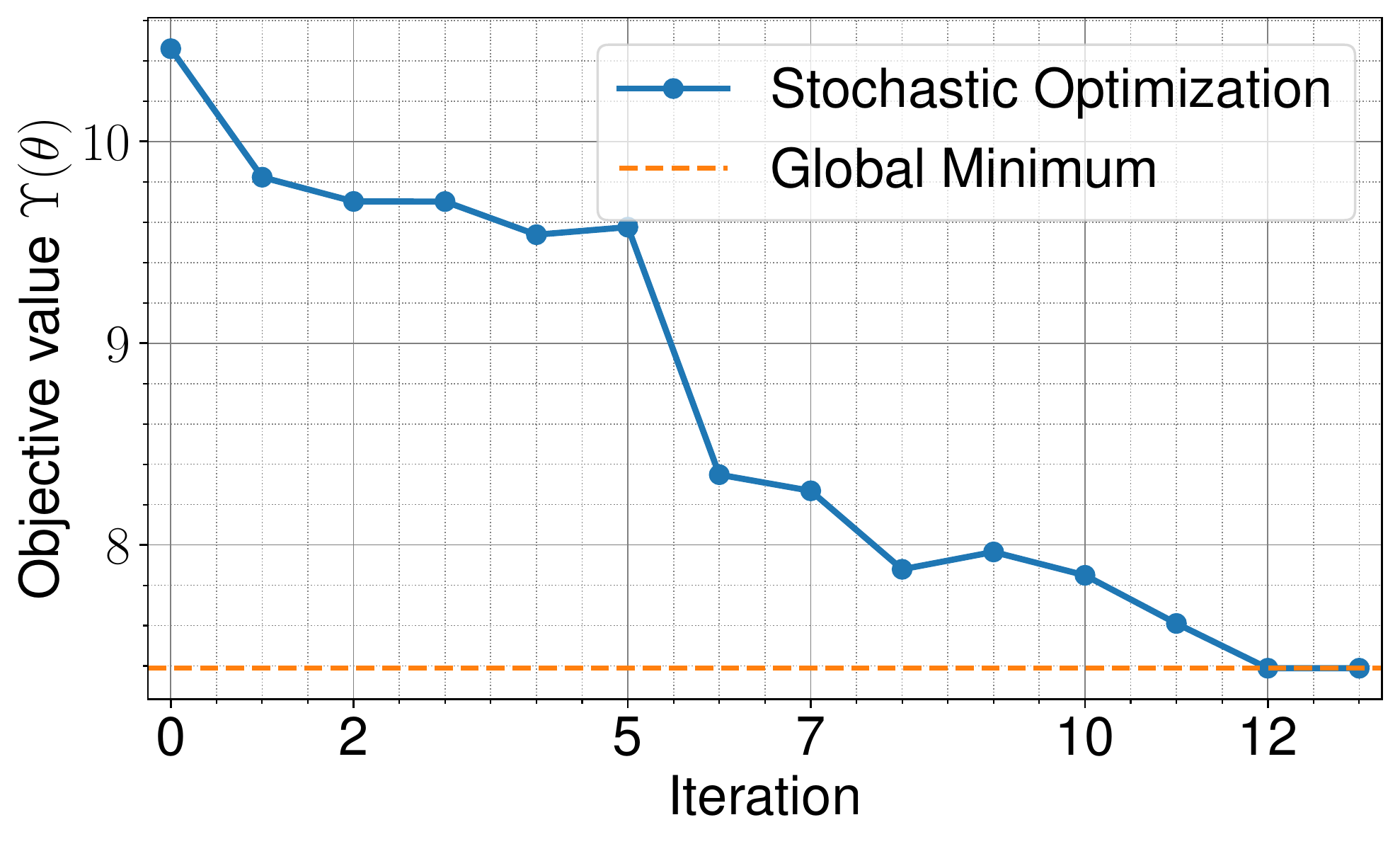}

          \caption{
            Results of the policy gradient procedures
            (~\Cref{alg:REINFORCE},~\Cref{alg:REINFORCE_baseline}), compared with the brute-force
            search of all candidate binary designs.
            Here, we set the sparsity penalty parameter to $\regpenalty=1.0$ and use
            a sparsity constraint, defined by $\Phi(\design):=\wnorm{\design}{0}$.
            Top: results of ~\Cref{alg:REINFORCE}.
            Bottom: ~\Cref{alg:REINFORCE_baseline}) with the optimal baseline
            estimate~\eqref{eqn:optimal_baseline}.
          }
          \label{fig:AD_Set1_REINFORCE_Penalty_1.0}
        \end{figure}
        
        As suggested by~\Cref{fig:AD_Set1_REINFORCE_Penalty_1.0} (left), there is
        a unique global optimum design with only $3$ active sensors.
        Both~\Cref{alg:REINFORCE}, and~\Cref{alg:REINFORCE_baseline} 
        result in degenerate probability distributions. The global optimal
        design, however, is attained by utilizing the optimal baseline estimate 
        as shown in~\Cref{fig:AD_Set1_REINFORCE_Penalty_1.0} (bottom).
        The computational cost of both algorithms, explained by the number of
        objective function evaluations,  is shown
        in~\Cref{fig:AD_Set1_REINFORCE_function_evaluations_Penalty_1.0}.
        \begin{figure}[ht] \centering
          \includegraphics[width=0.32\linewidth]{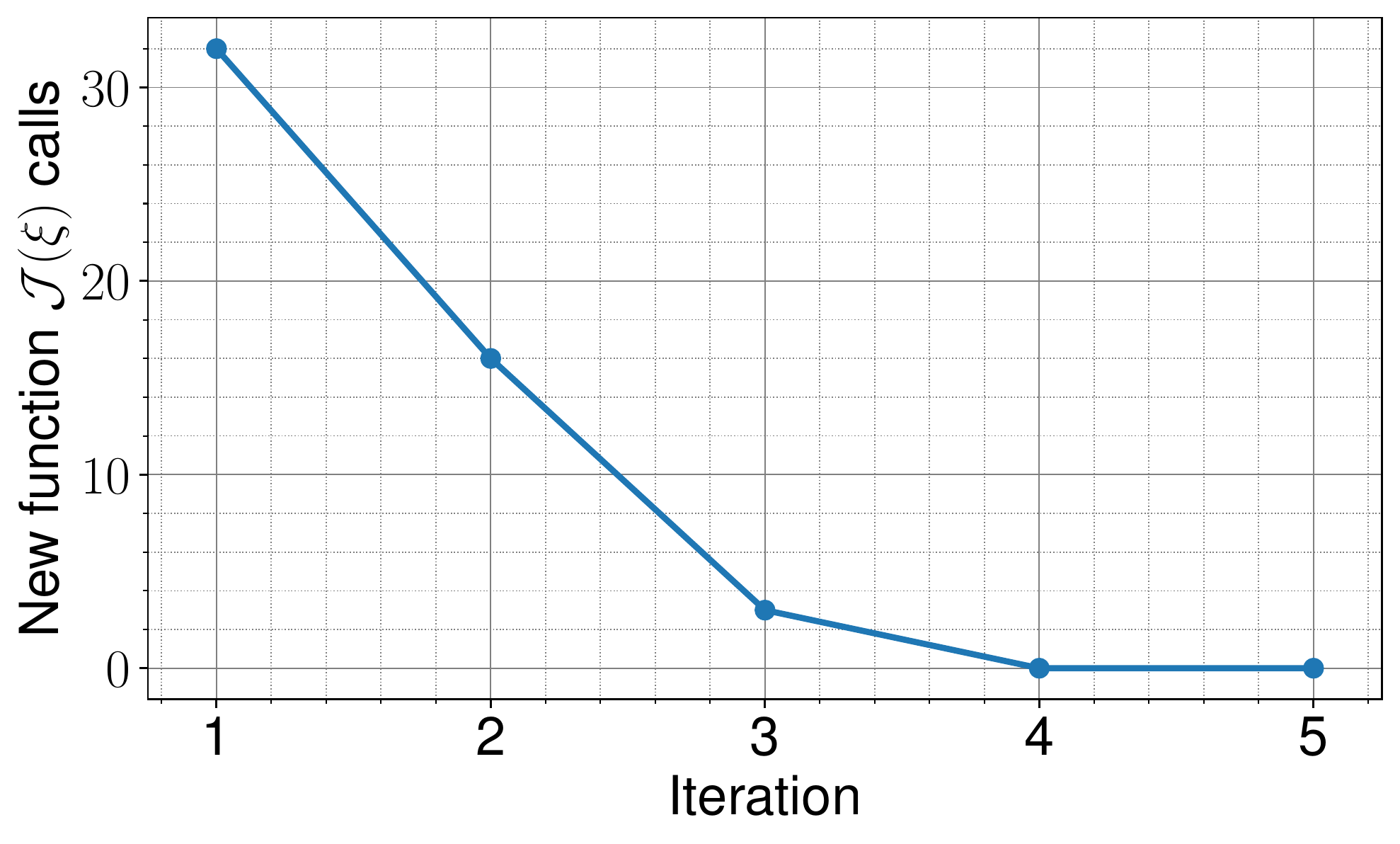}
          \quad
          \includegraphics[width=0.32\linewidth]{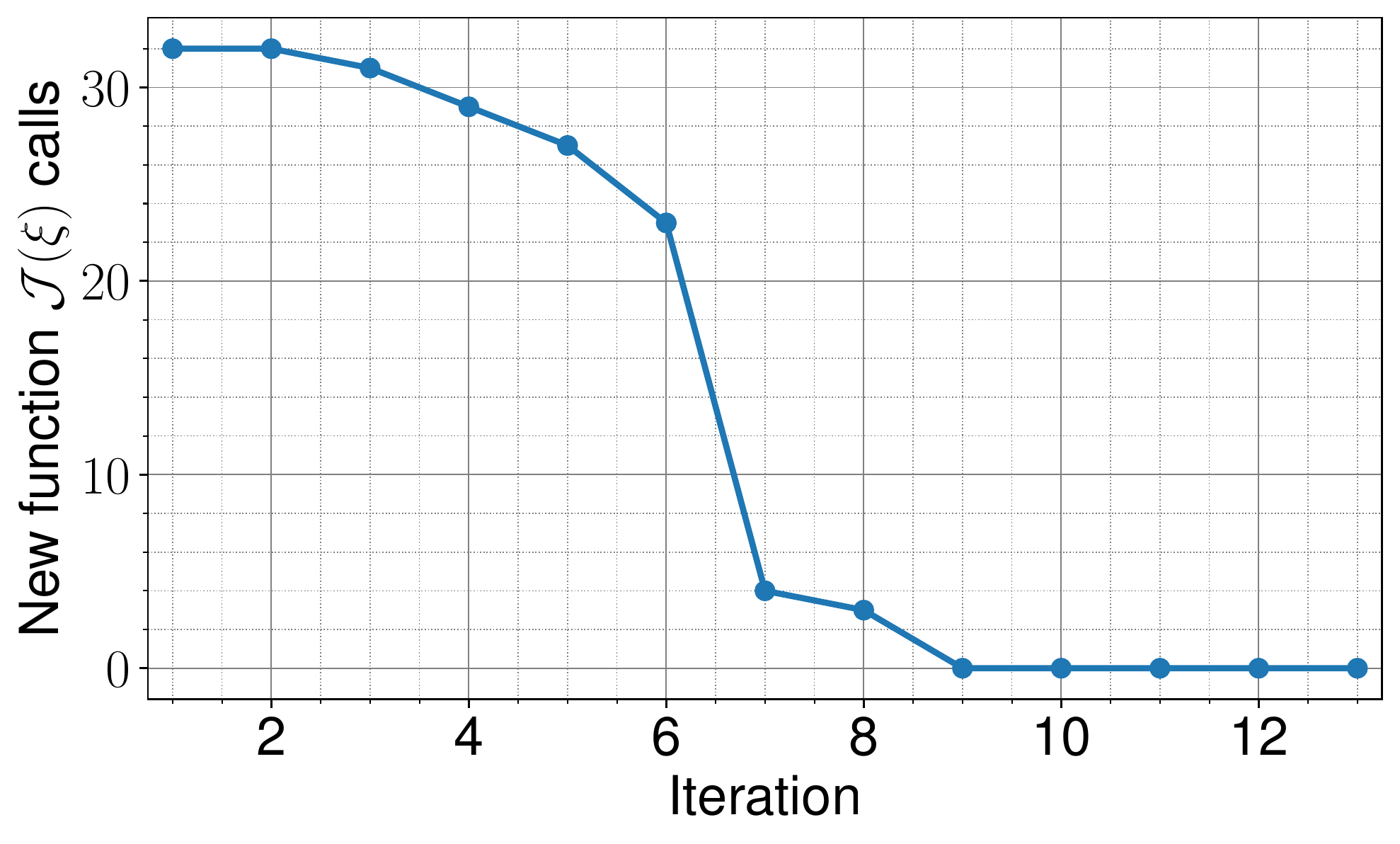}

          \caption{Similar to~\Cref{fig:AD_Set1_REINFORCE_NoPenalty_function_evaluations}.
            Here, we set the sparsity penalty parameter to $\regpenalty=1.0$ and use
            sparsity constraint, defined by $\Phi(\design):=\wnorm{\design}{0}$.
          }
          \label{fig:AD_Set1_REINFORCE_function_evaluations_Penalty_1.0}
        \end{figure}

      \subsubsection{Results with fixed-budget constraint}
      \label{subsubsec:AD_Results_BudgetConstraint}
        To study the behavior of the optimization algorithms in the presence of 
        an exact budget constraint $\wnorm{\design}{0}=\budget$, we carry out 
        the same procedure, with the penalty function set to
        $\Phi(\design):=\abs{\wnorm{\design}{0} - \budget}$
        and  the regularization penalty parameter set to $\regpenalty=1.0$,
        and we set the budget to $\budget=8$ sensors.
        Results are shown in~\Cref{fig:AD_Set1_REINFORCE_Budget_8_Penalty_1.0} and \Cref{fig:AD_Set1_REINFORCE_function_evaluations_Budget_8_Penalty_1.0},
        respectively. For clarity we omit results obtained by the 
        heuristic baseline.
        \begin{figure}[ht] \centering
          \includegraphics[align=c,width=0.55\linewidth]{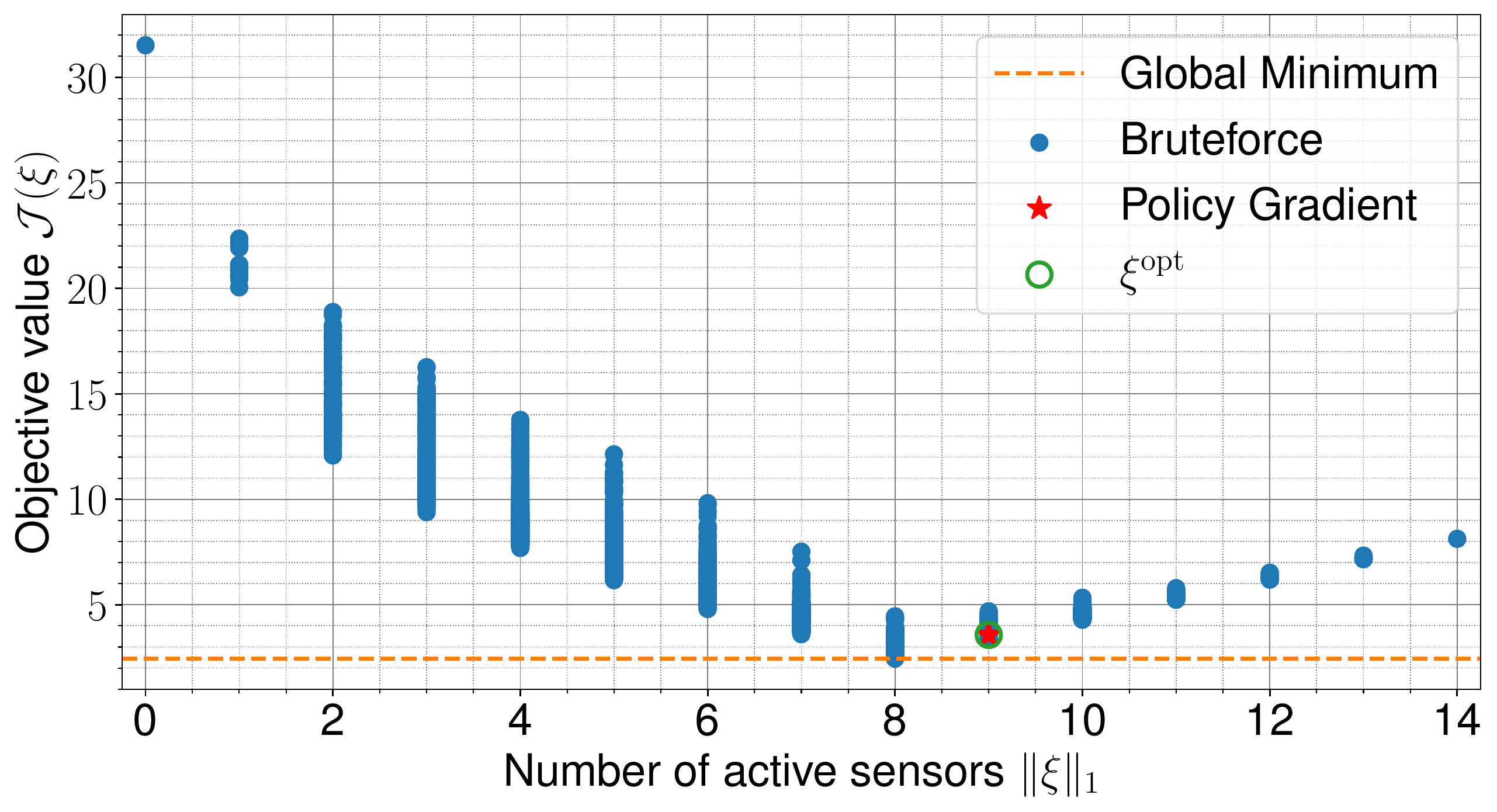}
          \quad
          \includegraphics[align=c,width=0.40\linewidth]{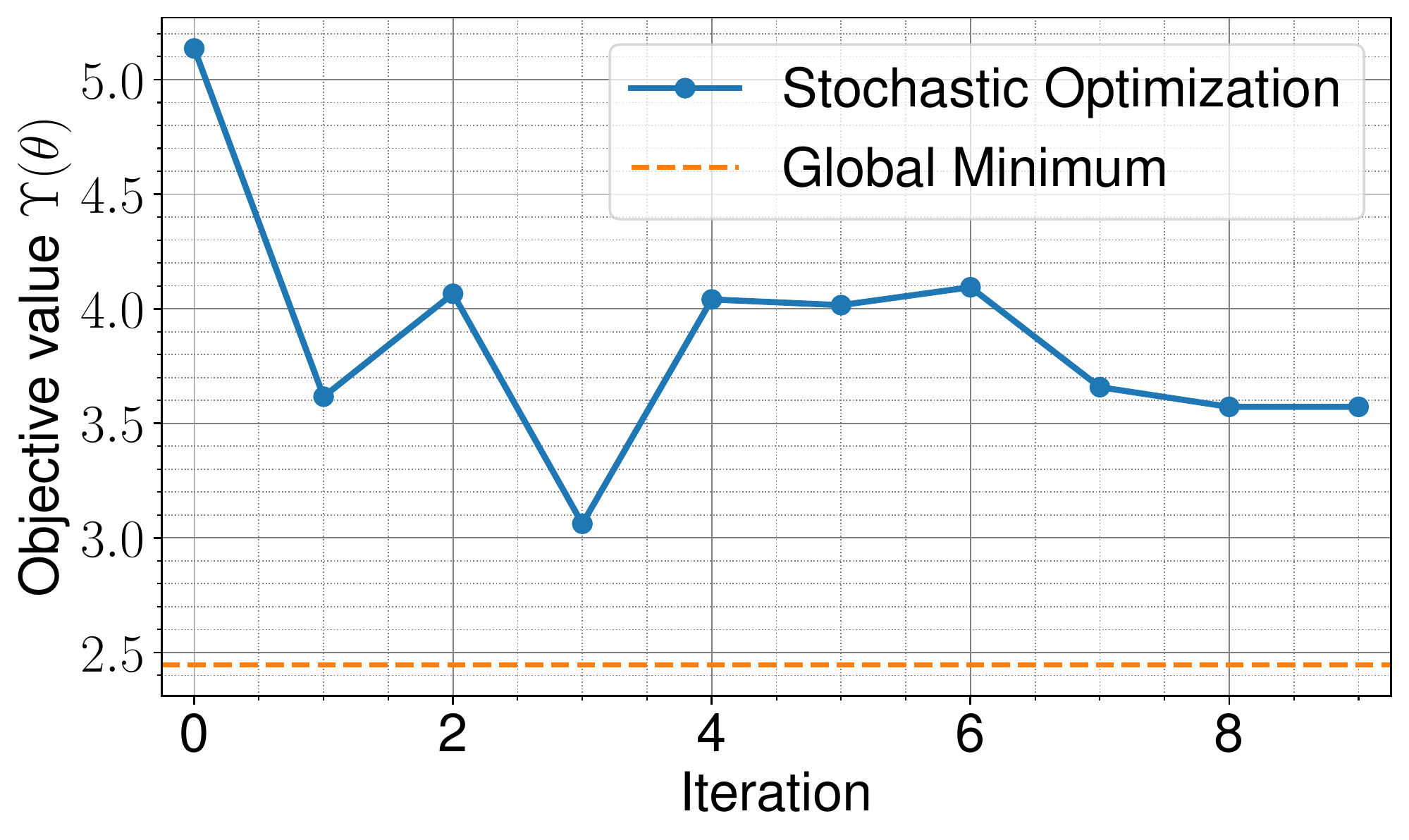}
          %
          %
          \includegraphics[align=c,width=0.55\linewidth]{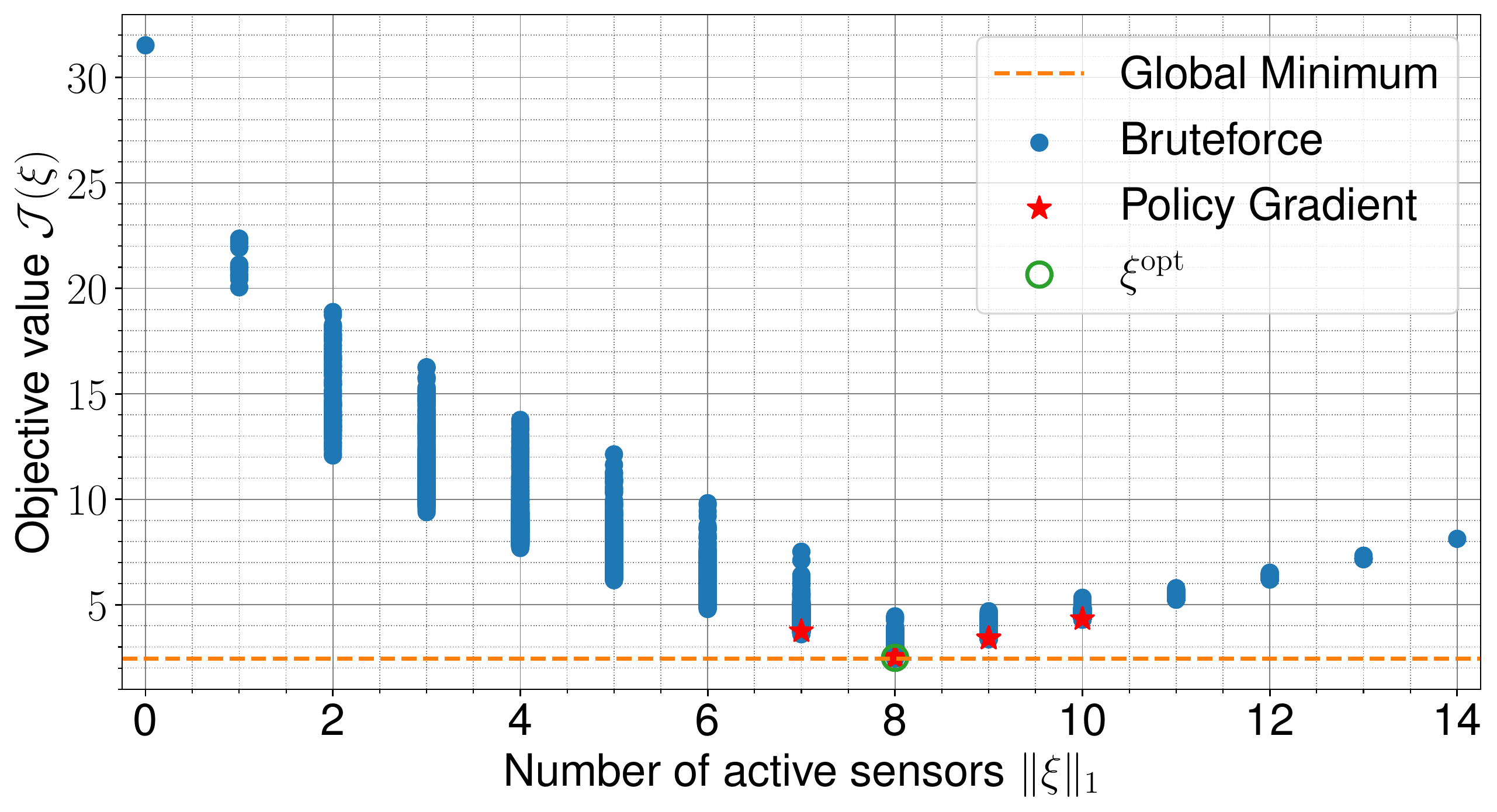}
          \quad
          \includegraphics[align=c,width=0.40\linewidth]{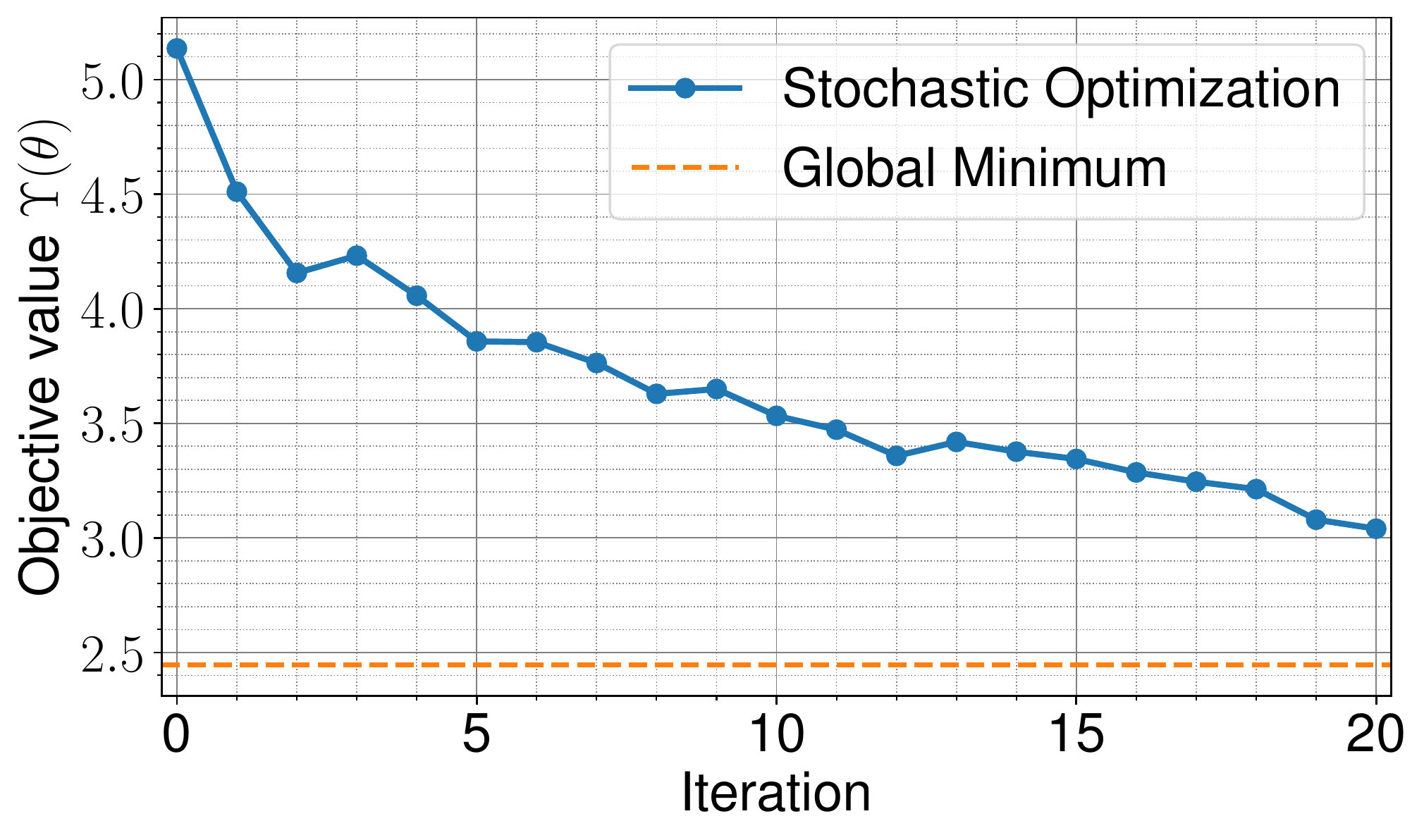}

          \caption{
            Results of the policy gradient procedures
            (~\Cref{alg:REINFORCE} and \Cref{alg:REINFORCE_baseline}), compared with the brute-force
            search of all candidate binary designs.
            Here, we set the sparsity penalty parameter to $\regpenalty=1.0$ and use
            budget constraint, defined by
            $\Phi(\design):=\wnorm{\design-\budget}{0}$, where $\budget=8$.
            Top: results of ~\Cref{alg:REINFORCE}.
            Bottom: ~\Cref{alg:REINFORCE_baseline}) with the optimal baseline
            estimate~\eqref{eqn:optimal_baseline}.
          }
          \label{fig:AD_Set1_REINFORCE_Budget_8_Penalty_1.0}
        \end{figure}
        
        \begin{figure}[ht] \centering
          \includegraphics[width=0.32\linewidth]{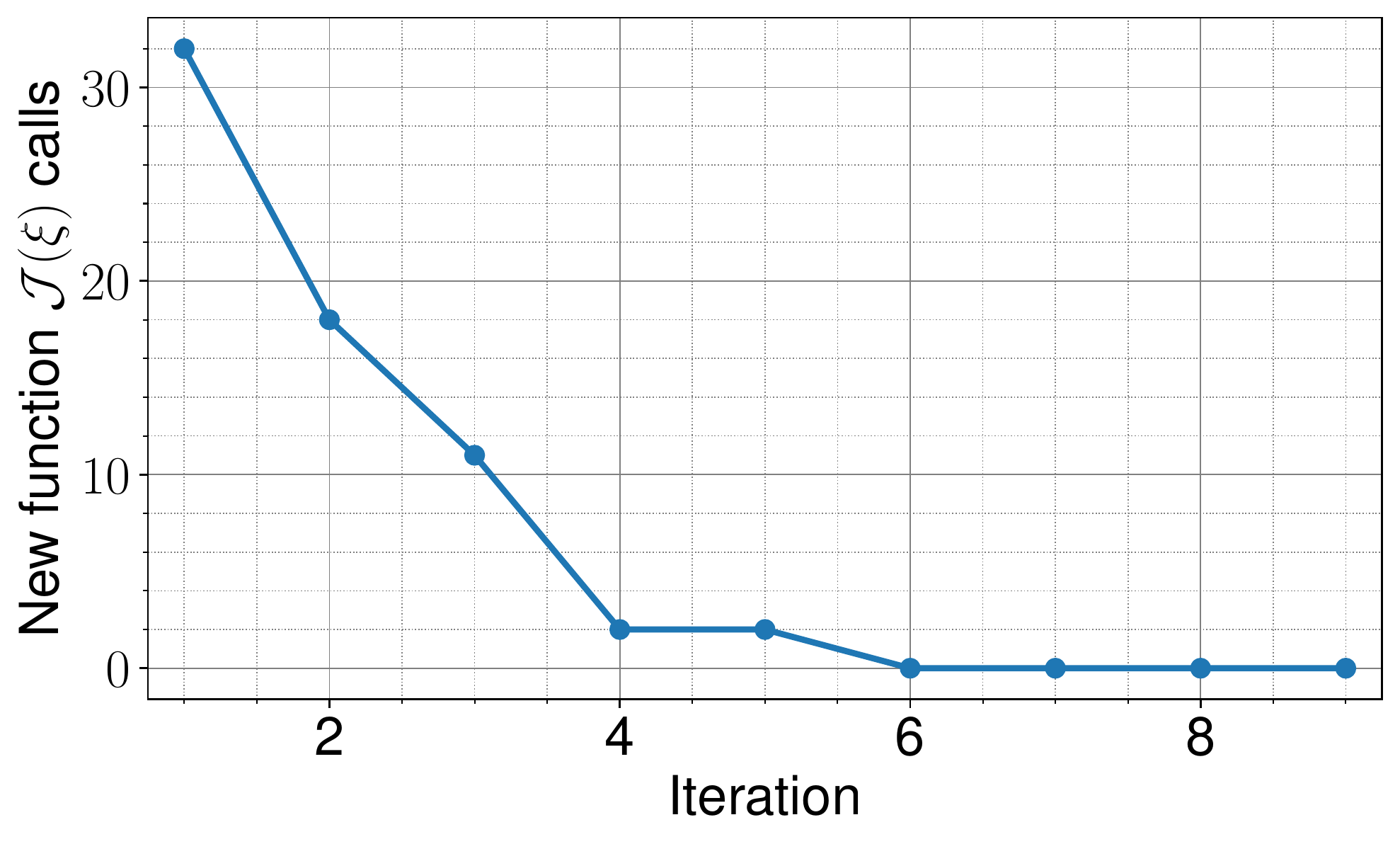}
          \quad 
          \includegraphics[width=0.32\linewidth]{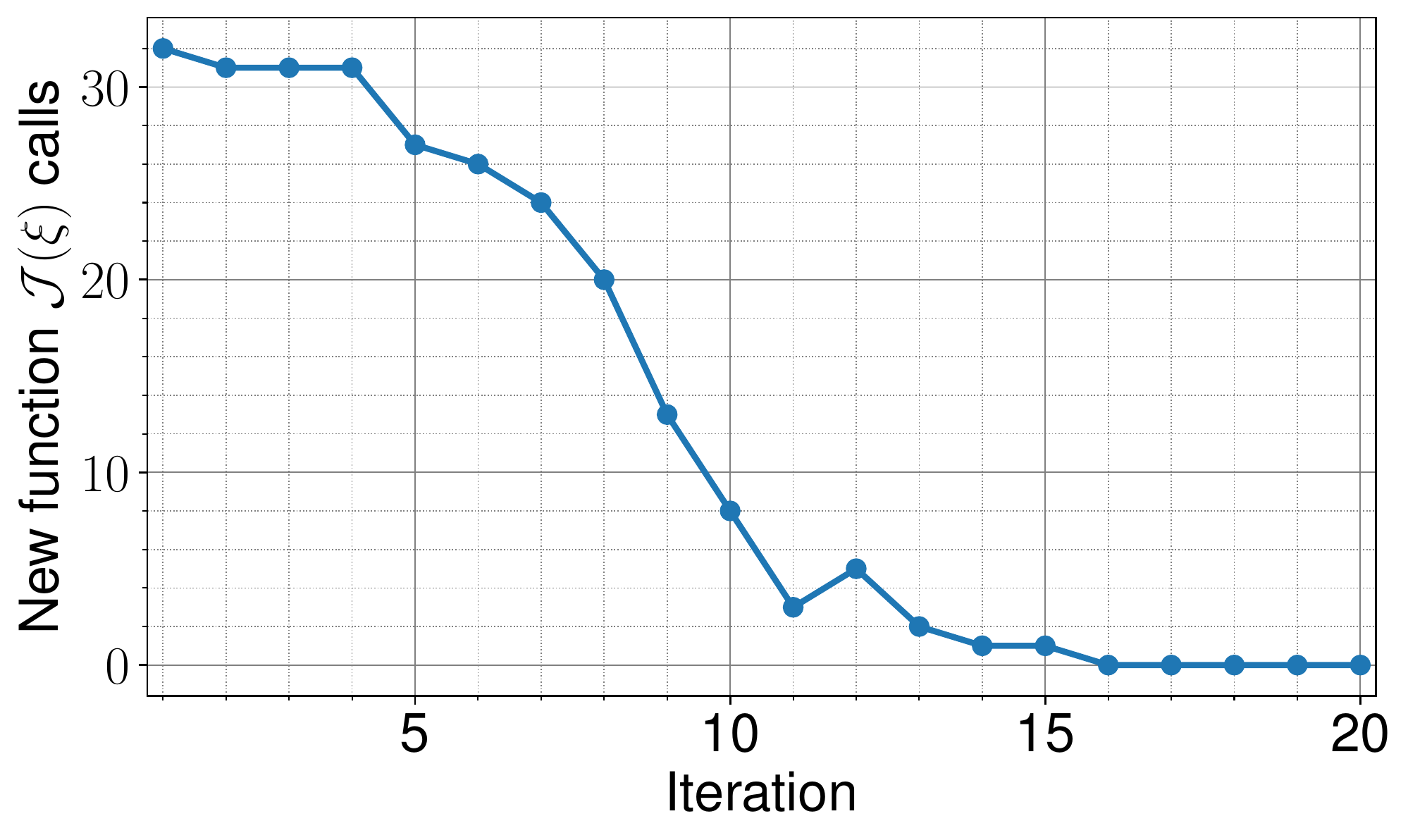}

          \caption{Similar to~\Cref{fig:AD_Set1_REINFORCE_NoPenalty_function_evaluations}.
            Here we set the sparsity penalty parameter to $\regpenalty=1.0$ and use
             the penalty constraint defined by
            $\Phi(\design):=\abs{\wnorm{\design}{0}-\budget}$, where $\budget=8$.
          }
          \label{fig:AD_Set1_REINFORCE_function_evaluations_Budget_8_Penalty_1.0}
        \end{figure}
        
        We note that the performance of both~\Cref{alg:REINFORCE},
        and~\Cref{alg:REINFORCE_baseline} is consistent with and without
        sparsity constraints. Moreover, by incorporating the optimal baseline
        estimate~\eqref{eqn:optimal_baseline} in~\Cref{alg:REINFORCE_baseline},
        at slight additional computational cost, the global optimum design is
        more likely to be discovered by the optimization algorithm.

      \subsubsection{Results with various learning rates}
      \label{subsubsec:AD_Results_LearningRate_Comparison}
        We conclude this section of experiments with results obtained by different
        learning rates.
        We use the setup in~\Cref{subsubsec:AD_Results_BudgetConstraint}; 
        that is, we assume an exact budget of $\budget=8$
        sensors and enforce it by setting the penalty function
        $\Phi(\design):=\wnorm{\design}{0}$ and the penalty parameter
        $\regpenalty=1$.
        \Cref{fig:AD_Set1_learning_rates} shows results obtained by varying the
        learning rate $\eta$ in the optimization algorithm.
        Specifically, we show results obtained from~\Cref{alg:REINFORCE_baseline}, with the
        optimal baseline estimate~\eqref{eqn:optimal_baseline} and note that 
        similar behavior was observed for the other settings used earlier in
        the paper.
        \begin{figure}[ht] \centering
          \includegraphics[width=0.45\linewidth]{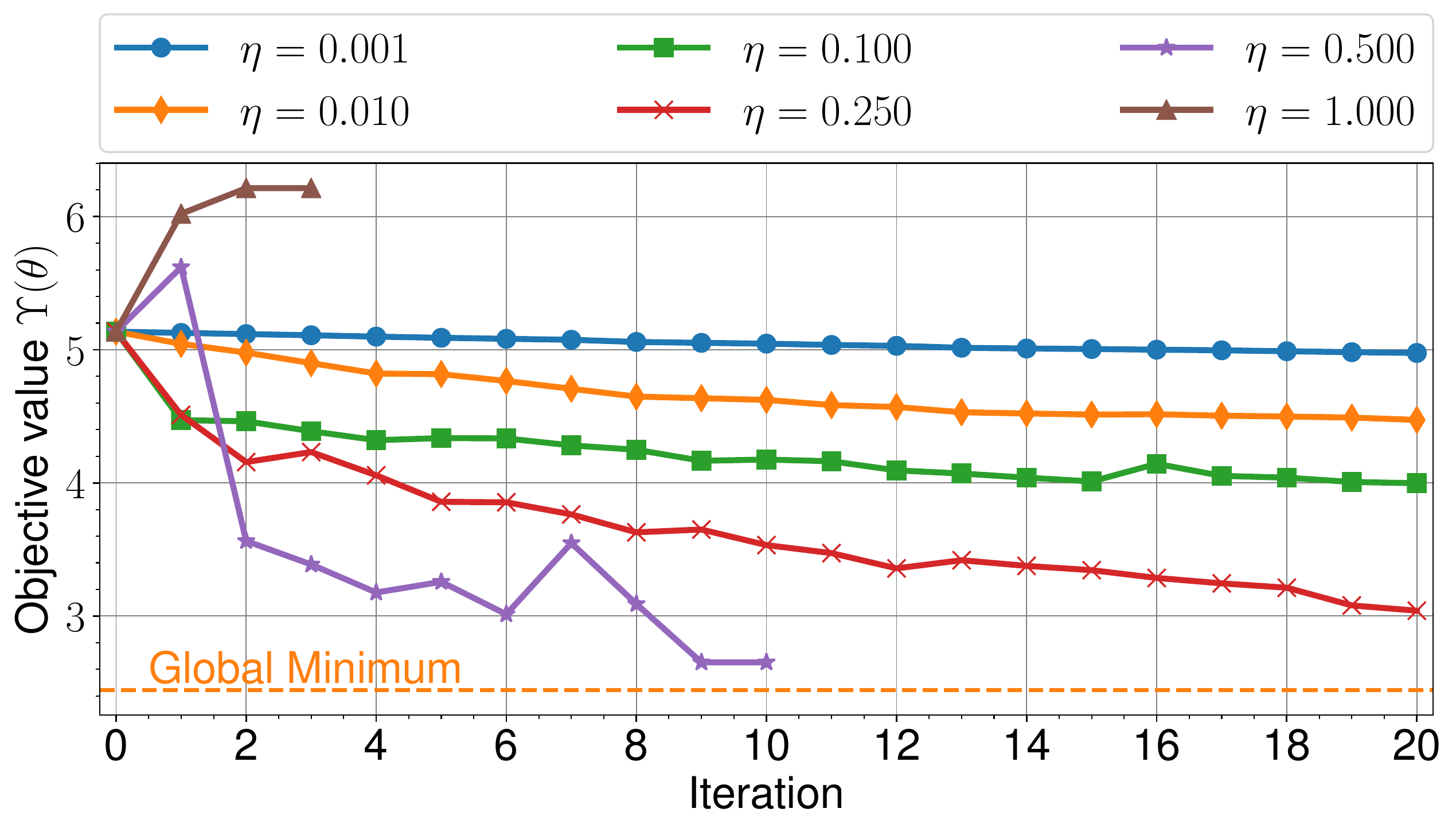}
          \quad
          \includegraphics[width=0.45\linewidth]{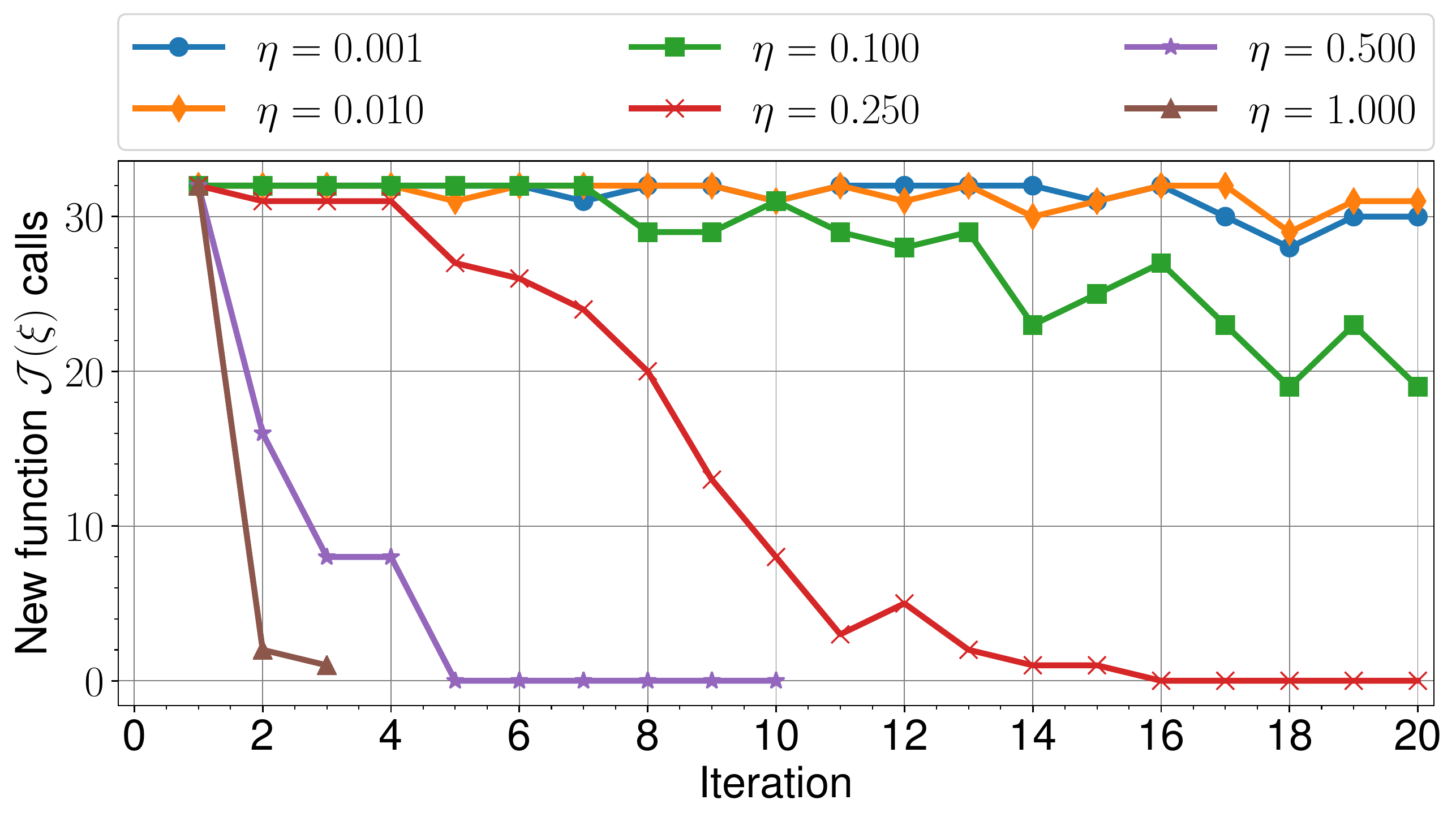}

          \caption{Similar to~\Cref{fig:AD_Set1_REINFORCE_NoPenalty_function_evaluations}.
            Here, we set the sparsity penalty parameter to $\regpenalty=1.0$ and use
            the budget constraint defined by
            $\Phi(\design):=\abs{\wnorm{\design}{0}-\budget}$, where $\budget=8$.
          }
          \label{fig:AD_Set1_learning_rates}
        \end{figure}
        
        \Cref{fig:AD_Set1_learning_rates} (left) shows the value of the
        stochastic objective function corresponding to the parameter
        $\hyperparam^{(k)}$ at the $k$th iteration of the optimization
        algorithm for various choices of the learning rate.
        \Cref{fig:AD_Set1_learning_rates} (right) shows the number of new
        calls to the function $\obj$ made at each iteration of the algorithm.
        We note that by increasing the learning rate $\eta$, the algorithm
        tends to converge quickly and explore the space of probability
        distributions near the global optimal policy very quickly. 
        However, this action is also associated with the risk of divergence.
       We note that $\eta=0.5$ is the best learning rate among the
        tested values. 
        
        In general, one can choose a small learning rate or even a decreasing
        sequence and run the optimization algorithm long enough 
        to guarantee convergence to an optimal policy. 
        Doing so, however, will likely increase the computational cost manifested 
        in the number of evaluations of $\obj$. 
        This problem is widely known as the \textit{exploration-exploitation trade-off} 
        in the reinforcement learning literature. 
        Finding an analytically optimal learning rate is beyond the scope of 
        this paper and will be explored in separate work.

\section{Discussion and Concluding Remarks}
  \label{sec:conclusions}
  In this work, we presented a new approach for the optimal design of experiments
  for Bayesian inverse problems constrained by expensive mathematical models,
  such as partial differential equations.
  The regularized utility function is cast 
  into a stochastic objective defined over the parameters of multivariate
  Bernoulli distribution.
  A policy gradient algorithm is used to optimize the new objective function 
  and thus yields an approximately optimal probability distribution, 
  that is, policy from which an approximately optimal design is sampled.
  The proposed approach does not require differentiability of the design 
  utility function nor the penalty function generally employed to enforce 
  sparsity or regularity conditions on the design. 
  Hence, the computational cost of the proposed methods, in terms of the number of
  forward model solves, is much less
  than the cost required by traditional gradient-based approach for optimal
  experimental design. The decrease in computational cost is due mostly  to the
  fact that the proposed method does not require evaluation of the simulation
  model for each entry of the gradient.
  Sparsity-enforcing penalty functions such as $\ell_0$ can be used directly,
  without the need to utilize a continuation procedure or apply a
  rounding technique.

  The main open issue
  pertains to the optimal selection of the learning rate parameter.
  While using a decreasing sequence satisfying the Robbins--Monro conditions
  guarantees convergence of the proposed algorithm almost surely, such a choice
  may require many iterations before convergence to a degenerate optimal policy.
  This issue will be addressed in detail in separate work.

  Note that the proposed stochastic formulation can be solved by other
  sample-based optimization algorithms, including sample average
  approximation~\cite{robbins1951stochastic,spall2005introduction,nemirovski2009robust}.
  The performance of these algorithms compared with that of the proposed algorithms will be also
  considered in separate works.

  We note that utilizing traditional cost-reduction methods, 
  including randomized matrix
  methods~\cite{AvronToledo11,saibaba2016randomized,saibaba2017randomized},
  and other reduced-order modeling approaches (see, e.g.,~\cite{spantini2015optimal,flath2011fast,cui2016scalable,attia2016reducedhmcsmoother}),
  to reduce the cost of the OED criterion $\obj$ 
  apply to both the relaxed approach and our proposed approach equally.
  This shows that the proposed algorithms introduce massive computational savings 
  to the OED solution process, compared with the traditional relaxation approach.

\section*{Acknowledgments}
This material is based upon work supported by the U.S. Department of Energy,
Office of Science, under contract number DE-AC02-06CH11357.

\appendix

\section{Multivariate Bernoulli Distribution}
\label{app:Bernoulli}
  The probabilities of a Bernoulli random variable $\design\in\{0, 1\}$ are described by 
  \begin{equation}\label{eqn:scalar_Bernoulli_prob}
    \CondProb{\design=v}{\hyperparam} :=
      \begin{cases}
        \hyperparam   &; \quad v=1 \,, \\
        1-\hyperparam &;  \quad v=0 \,,
      \end{cases}
  \end{equation}
  where $\hyperparam_i \in [0, 1]$ can be thought of as the probability of
  success in a one-trial experiment.
  The probability mass function (PMF) of this variable takes the compact form
  $\CondProb{\design}{\hyperparam} 
    = \hyperparam^{\design} \, (1-\hyperparam)^{(1-\design)}
  $. Moreover, the following identity holds:
  \begin{equation}\label{eqn:scalar_PMF_deriv}
    \del{\CondProb{\design}{\hyperparam}}{\hyperparam} = (-1) ^ {1-\design)} \,.
  \end{equation}
  %


  Assuming $\design_i,\,i=1,2,\ldots,\Nsens$ are mutually independent
  Bernoulli random variables with respective success probabilities
  $\hyperparam_i,\, i=1,2,\ldots,\Nsens$, then
  the joint probability mass function of the random variable
  $\design=(\design_1,\design_2,\ldots,\design_{\Nsens})\tran$, parameterized
  by $\hyperparam=(\hyperparam_1, \hyperparam_2,\ldots,\hyperparam_{\Nsens})\tran$,
  takes the form
  \begin{equation}\label{eqn:multivariate_Bernoulli}
    \CondProb{\design}{\hyperparam}
      = \prod_{i=1}^{\Nsens} {\hyperparam_i ^ {\design_i}\,
        \left(1-\hyperparam_i\right)^{1-\design_i}}
      \equiv 
        \prod_{i=1}^{\Nsens} 
        \Bigl(
          { \hyperparam_i {\design_i}\, +
            \left(1-\hyperparam_i\right) \left(1-\design_i \right) 
          }
        \Bigr) \,,
    \quad \design_i \in  \{0, 1\} \,.
  \end{equation}

  By using~\eqref{eqn:scalar_PMF_deriv}, the first-order derivative 
  of~\eqref{eqn:multivariate_Bernoulli} w.r.t the parameters $\hyperparam_i$
  is described by
  \begin{equation}\label{eqn:Bernoulli_PMF_derivative}
    \begin{aligned}
      \del{ \CondProb{\design}{\hyperparam} }{\hyperparam_j}
        &=\del{}{\hyperparam_j} 
          \left(
            \hyperparam_j^{\design_j}\,\left(1-\hyperparam_j \right)^{1-\design_j}
            \prod_{\substack{i=1\\ i\neq j}}^{\Nsens} {\hyperparam_i ^ {\design_i}
            \left(1-\hyperparam_i\right)^{1-\design_i}}
          \right)
        = (-1)^{1-\design_j}
          \prod_{\substack{i=1 \\ i\neq j}}^{\Nsens} 
            {\hyperparam_i ^ {\design_i}
            \left(1-\hyperparam_i\right)^{1-\design_i}}
        \,.
    \end{aligned}
  \end{equation}

  Thus, the gradient can be written as
  \begin{equation}
    \nabla_{\hyperparam}\, \CondProb{\design}{\hyperparam}
      = \sum_{j=1}^{\Nsens} \del{ \CondProb{\design}{\hyperparam} }{\hyperparam_j}
      = \sum_{j=1}^{\Nsens} (-1)^{1-\design_j}
        \prod_{\substack{i=1 \\ i\neq j}}^{\Nsens} {\hyperparam_i ^ {\design_i}
        \left(1-\hyperparam_i\right)^{1-\design_i}} \vec{e}_j \,.
  \end{equation}

  Note that the derivative given by~\eqref{eqn:Bernoulli_PMF_derivative} is the
  (signed) conditional probability of $\design$ conditioned by
  $\design_j$ and $\hyperparam_j$, respectively.
  The second-order derivatives follow as
  \begin{equation}\label{eqn:Bernoulli_PMF_Hessian}
    \begin{aligned}
      \delll{\CondProb{\design}{\hyperparam} }{\hyperparam_k}{\hyperparam_j}
        &= (1-\delta_{kj}) (-1)^{2-\design_j-\design_k}
          \prod_{\substack{i=1\\ i\notin \{j,k\}}}^{\Nsens} {\hyperparam_i ^ {\design_i}\,
          \left(1-\hyperparam_i\right)^{1-\design_i}}
        \,,
    \end{aligned}
  \end{equation}
  where $\delta_{kj}$ is the standard Kronecker delta function.
  The gradient of the log-probabilities, that is, the score
  function,
  of the multivariate Bernoulli PMF~\eqref{eqn:multivariate_Bernoulli}, 
  is given by
  \begin{equation}\label{eqn:grad_log_Bernoulli}
    \begin{aligned}
      \nabla_{\hyperparam} \log{ \CondProb{\design}{\hyperparam} }
        & = \nabla_{\hyperparam} \log{\prod_{i=1}^{\Nsens}
          {\hyperparam_i ^ {\design_i}\, \left(1-\hyperparam_i\right)^{1-\design_i}}}
        = \sum_{i=1}^{\Nsens} \nabla_{\hyperparam} \log{\hyperparam_i ^ {\design_i} }
          + \sum_{i=1}^{\Nsens} \nabla_{\hyperparam}
            \log{\left(1-\hyperparam_i\right)^{1-\design_i} }  \\
        & = \sum_{i=1}^{\Nsens} {\design_i} \nabla_{\hyperparam} \log{\hyperparam_i }
          + \sum_{i=1}^{\Nsens} (1-\design_i) \nabla_{\hyperparam}
            \log{\left(1-\hyperparam_i\right) }
         = \sum_{i=1}^{\Nsens} \left(
          \frac{\design_i}{\hyperparam_i} + \frac{\design_i-1}{1-\hyperparam_i}
        \right) \,\vec{e}_i  \,.
    \end{aligned}
  \end{equation}
  %


  It follows immediately from~\eqref{eqn:grad_log_Bernoulli} that
  \begin{equation}\label{eqn:Bernoulli_2nd_derivatives}
    \begin{aligned}
      \nabla_{\hyperparam} \nabla_{\hyperparam} \log{ \CondProb{\design}{\hyperparam} }
        & = \sum_{i=1}^{\Nsens} \left(
          \frac{-\design_i}{\hyperparam_i^2} - \frac{1-\design_i}{(1-\hyperparam_i)^2}
        \right) \,\vec{e}_i\vec{e}_i\tran \\
      \nabla_{\hyperparam} \log{ \CondProb{\design}{\hyperparam} }
        \left( \nabla_{\hyperparam} \log{ \CondProb{\design}{\hyperparam} } \right)\tran
        & = \sum_{i=1}^{\Nsens} \sum_{j=1}^{\Nsens}
          \left( \frac{\design_i}{\hyperparam_i}
            - \frac{1-\design_i}{1-\hyperparam_i} \right)
          \left( \frac{\design_j}{\hyperparam_j}
            - \frac{1-\design_j}{1-\hyperparam_j} \right) \,
          \vec{e}_i \vec{e}_j\tran .
    \end{aligned}
  \end{equation}

  In the rest of this Appendix, we prove some identities essential for
  convergence analysis of the algorithms proposed in this work.
  We start with the following basic relations.
  First note that
  $
  \Expect{}{\design_i\design_j} =
    \brCov{\design_i,\,\design_j}
    + \Expect{}{\design_i}\Expect{}{\design_j}
      = \delta_{ij} \hyperparam_i(1-\hyperparam_i) + \hyperparam_i \hyperparam_j$.
  This means that $\Expect{}{\design_i^2}=\Expect{}{\design_i}=\hyperparam_i$
  and $\Expect{}{\design_i\design_j}
    = \hyperparam_i\hyperparam_j\,\forall i\neq j$.
  Similarly,
  $
  \Expect{}{\design_i(\design_j-1)}
    = \Expect{}{\design_i \design_j} - \Expect{}{\design_i}
    = \delta_{ij} \hyperparam_i(1-\hyperparam_i)
      + \hyperparam_i \hyperparam_j - \hyperparam_i \,,
  $
  and
  $
  \Expect{}{(\design_i-1)(\design_j-1)}
    = \Expect{}{\design_i \design_j}
      - \hyperparam_i - \hyperparam_j
      + 1
    = \delta_{ij} \hyperparam_i(1-\hyperparam_i)
      + \hyperparam_i \hyperparam_j
      - \hyperparam_i - \hyperparam_j
      + 1
    \,.
  $
  We can summarize these identities as follows:
  \begin{equation}\label{eqn:Bernoulli_expect_identities}
    \begin{aligned}
      \Expect{}{\design_i\design_j}
        &= \begin{cases}
          \hyperparam_i,\,&\, i=j  \\
          \hyperparam_i\hyperparam_j ,\,&\, i\neq j
        \end{cases} \,,  \\
      \Expect{}{\design_i(\design_j\!-\!1)}
        &= \begin{cases}
          0,\,&\, i=j  \\
          \hyperparam_i(\hyperparam_j\!-\!1) ,\,&\, i\neq j
        \end{cases} \,,  \\
      \Expect{}{(\design_i\!-\!1)(\design_j\!-\!1)}
        &= \begin{cases}
          1\!-\!\hyperparam_i,\,&\, i=j  \\
          (1-\hyperparam_i) (1- \hyperparam_j) ,\,&\, i\neq j
        \end{cases} \,.
    \end{aligned}
  \end{equation}

  \begin{lemma}\label{lemma:grad_logvar_dist}
    Let $\design\in \Omega_{\design}:=\{0,1\}^{\Nsens}$ be a random variable
    following the joint Bernoulli distribution~\eqref{eqn:multivariate_Bernoulli},
    and assume that $\Var{(\design)}$ is the total variance operator that evaluates 
    the trace of the variance-covariance matrix of the random variable
    $\design$. Then the following identities hold:
    \begin{equation} \label{eqn:grad_logvar_dist}
      \Expect{}{\nabla_{\hyperparam} \log{ \CondProb{\design}{\hyperparam} }
      } = 0 \,;\qquad
      \brVar{\nabla_{\hyperparam} \log{
        \CondProb{\design}{\hyperparam} } }
        = \sum_{i=1}^{\Nsens}\frac{1}  {\hyperparam_i-\hyperparam_i^2} \,.
    \end{equation}
  \end{lemma}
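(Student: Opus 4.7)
The plan is to work directly from the explicit formula for the score function given in~\eqref{eqn:grad_log_Bernoulli}, namely
\[
\nabla_{\hyperparam} \log \CondProb{\design}{\hyperparam}
= \sum_{i=1}^{\Nsens}\left(\frac{\design_i}{\hyperparam_i} + \frac{\design_i-1}{1-\hyperparam_i}\right)\vec{e}_i,
\]
and exploit the fact that the components $\design_1,\dots,\design_{\Nsens}$ are independent Bernoulli random variables. Since the $i$th component of the score depends only on $\design_i$, the whole calculation decouples into $\Nsens$ scalar Bernoulli calculations.

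For the first identity, I would observe that the $i$th component can be simplified to the compact form
\[
\frac{\design_i}{\hyperparam_i}+\frac{\design_i-1}{1-\hyperparam_i}
= \frac{\design_i-\hyperparam_i}{\hyperparam_i(1-\hyperparam_i)},
\]
using a common denominator. Since $\Expect{}{\design_i}=\hyperparam_i$ by~\eqref{eqn:Bernoulli_expect_identities}, the expectation of the numerator vanishes, and therefore the expectation of every component of the score function is zero. This gives $\Expect{}{\nabla_{\hyperparam}\log \CondProb{\design}{\hyperparam}}=\mathbf{0}$.

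For the second identity, I would use the same rewritten form of the $i$th component. Then
\[
\brVar{\frac{\design_i-\hyperparam_i}{\hyperparam_i(1-\hyperparam_i)}}
= \frac{\brVar{\design_i}}{\big(\hyperparam_i(1-\hyperparam_i)\big)^2}
= \frac{\hyperparam_i(1-\hyperparam_i)}{\big(\hyperparam_i(1-\hyperparam_i)\big)^2}
= \frac{1}{\hyperparam_i-\hyperparam_i^2}.
\]
Because $\design_i$ and $\design_j$ are independent for $i\neq j$, the $i$th and $j$th components of the score vector are uncorrelated, so the variance-covariance matrix of the score is diagonal with entries $1/(\hyperparam_i-\hyperparam_i^2)$. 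Taking its trace yields the total variance $\sum_{i=1}^{\Nsens} 1/(\hyperparam_i-\hyperparam_i^2)$.

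There is no real obstacle here: the whole argument is a direct computation once one notices the componentwise simplification $\design_i/\hyperparam_i+(\design_i-1)/(1-\hyperparam_i)=(\design_i-\hyperparam_i)/[\hyperparam_i(1-\hyperparam_i)]$ and invokes independence. The only technical point worth flagging is that $\hyperparam_i\in(0,1)$ is required so the denominators are well-defined, which is consistent with the assumption made earlier in~\Cref{subsec:stochastic_OED_solution} that degenerate components are projected out.
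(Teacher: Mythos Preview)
Your proof is correct and slightly different from the paper's. For the first identity, the paper uses the distribution-agnostic argument
\[
\Expect{}{\nabla_{\hyperparam}\log \CondProb{\design}{\hyperparam}}
=\sum_{\design}\nabla_{\hyperparam}\CondProb{\design}{\hyperparam}
=\nabla_{\hyperparam}\sum_{\design}\CondProb{\design}{\hyperparam}=0,
\]
which is the standard ``score has zero mean'' identity and does not use the Bernoulli structure at all; you instead work componentwise from the explicit score~\eqref{eqn:grad_log_Bernoulli} and invoke $\Expect{}{\design_i}=\hyperparam_i$. For the variance, the paper expands $\bigl(\design_i/\hyperparam_i+(\design_i-1)/(1-\hyperparam_i)\bigr)^2$ directly and simplifies using $\Expect{}{\design_i^2}=\hyperparam_i$, whereas you first collapse the component to $(\design_i-\hyperparam_i)/[\hyperparam_i(1-\hyperparam_i)]$ and then read off the variance from $\Var(\design_i)=\hyperparam_i(1-\hyperparam_i)$. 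Your route is algebraically cleaner and makes the diagonal structure of the Fisher information transparent via independence; the paper's route for the mean has the advantage of being model-free. Both arrive at the same result with no gaps.
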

  \begin{proof}
    The first identity follows as
    \begin{equation}
      \begin{aligned}
        \Expect{}{\nabla_{\hyperparam} \log{ \CondProb{\design}{\hyperparam}}}
          = \sum_{\design} \nabla_{\hyperparam}
            \log{ \CondProb{\design}{\hyperparam}} \CondProb{\design}{\hyperparam}
          = \sum_{\design} \nabla_{\hyperparam} \CondProb{\design}{\hyperparam}
          = \nabla_{\hyperparam} \sum_{\design} \CondProb{\design}{\hyperparam}
          = 0 \,.
      \end{aligned}
    \end{equation}

    By definition of the covariance matrix, and since
    $\Expect{}{\nabla_{\hyperparam} \log{
      \CondProb{\design}{\hyperparam} }}=0$, then 
    \begin{equation}
      \begin{aligned}
      \brVar{ \nabla_{\hyperparam} \log{ \CondProb{\design}{\hyperparam} } } 
        \:&= \Trace{\brCov{\nabla_{\hyperparam} \log{
          \CondProb{\design}{\hyperparam} }    ,
          \nabla_{\hyperparam} \log{
          \CondProb{\design}{\hyperparam} }  
        }} \\
        &= \Trace{ 
          \Expect{}{
            \left(
              \nabla_{\hyperparam} \log{ \CondProb{\design}{\hyperparam}}
            \right)
            \left(
              \nabla_{\hyperparam} \log{ \CondProb{\design}{\hyperparam}}
            \right)\tran
          }} \\ 
        &= \Expect{}{ 
          \Trace{
            \left(
              \nabla_{\hyperparam} \log{ \CondProb{\design}{\hyperparam}}
            \right)
            \left(
              \nabla_{\hyperparam} \log{ \CondProb{\design}{\hyperparam}}
            \right)\tran
          }} \\ 
        &= \Expect{}{
            \left(
              \nabla_{\hyperparam} \log{ \CondProb{\design}{\hyperparam}}
            \right) \tran
            \left(
              \nabla_{\hyperparam} \log{ \CondProb{\design}{\hyperparam}}
            \right)
          } \,,
      \end{aligned}
    \end{equation}
    where we utilized the circular property
    of the trace operator and the fact that the matrix trace is a linear 
    operator. Thus, 
    \begin{equation}
      \begin{aligned}
        \brVar{\nabla_{\hyperparam} \log{ \CondProb{\design}{\hyperparam} }}
          &= \Expect{}{
            \left(
              \nabla_{\hyperparam} \log{ \CondProb{\design}{\hyperparam}}
            \right)\tran
            \left(
              \nabla_{\hyperparam} \log{ \CondProb{\design}{\hyperparam}}
            \right)
          }  \\
          &= \Expect{}{
            \sum_{i=1}^{\Nsens}
              \left(
                \del{\log{ \CondProb{\design}{\hyperparam_i}}}{\hyperparam_i}
              \right)^2
            }
          = \Expect{}{
            \sum_{i=1}^{\Nsens}
              \left(
                \frac{\design_i}{\hyperparam_i}
                + \frac{\design_i-1}{1-\hyperparam_i}
              \right)^2
            }  \\
          &= \Expect{}{
            \sum_{i=1}^{\Nsens}
              \left(
                \frac{\design_i^2}{\hyperparam_i^2}
                + 2 \frac{\design_i^2-\design_i}{\hyperparam_i-\hyperparam_i^2}
                + \frac{\design_i^2-2\design_i+1}{(1-\hyperparam_i)^2}
              \right)
            }  \\
          &= \sum_{i=1}^{\Nsens}
              \left(
                \frac{\Expect{}{\design_i^2}}{\hyperparam_i^2}
                + \frac{\Expect{}{\design_i^2}-2\Expect{}{\design_i} + 1}
                  {(1-\hyperparam_i)^2}
              \right)
          = \sum_{i=1}^{\Nsens}
              \left(
                \frac{\hyperparam_i}{\hyperparam_i^2}
                + \frac{\hyperparam_i-2\hyperparam_i + 1}
                  {(1-\hyperparam_i)^2}
              \right)  \\
          &= \sum_{i=1}^{\Nsens}
              \left(
                \frac{1}{\hyperparam_i}
                + \frac{1-\hyperparam_i}
                  {(1-\hyperparam_i)^2}
              \right)
          = \sum_{i=1}^{\Nsens}
              \left(
                \frac{1}{\hyperparam_i}
                + \frac{1}
                  {1-\hyperparam_i}
              \right)
          = \sum_{i=1}^{\Nsens}\frac{1}  {\hyperparam_i-\hyperparam_i^2}
            \,,
      \end{aligned}
    \end{equation}
    where we used the fact that
    $ \Expect{}{\design_i^2}
      = \hyperparam_i
    $, as shown by~\eqref{eqn:Bernoulli_expect_identities}.
    This is also obvious since $\design^2=\design$. 
  \end{proof}

    \begin{lemma}\label{lemma:Bernoulli_PMF_gradient_norm_bound}
      Let $\design\in \Omega_{\design}:=\{0,1\}^{\Nsens}$ be a random variable
      following the joint Bernoulli distribution~\eqref{eqn:multivariate_Bernoulli}.
      Then for any $\design \in \Omega_{\design}$, the following bounds hold:
      \begin{align}
        \norm{ \nabla_{\hyperparam}\, \CondProb{\design}{\hyperparam} }
        & \leq
          \sqrt{\Nsens} \,
            \max\limits_{j=1,\ldots,\Nsens}
              \, \min\limits_{\substack{i=1,\ldots,\Nsens\\k\neq j}}
            \CondProb{\design_i}{\hyperparam_i}  
            \label{eqn:Bernoulli_PMF_gradient_norm_bound}
            \\
        \Expect{\design}{
          \sqnorm{ \nabla_{\hyperparam} \log{ \CondProb{\design}{\hyperparam} } }}
        \equiv \brVar{\nabla_{\hyperparam}{\log{\CondProb{\design}{\hyperparam}}}} 
        &\leq   \frac{ \Nsens}{\min\limits_i \hyperparam}
            + \frac{ \Nsens}{1-\max\limits_i \hyperparam} \,.
            \label{eqn:grad_log_Bernoulli_norm_bound}
      \end{align}
    \end{lemma}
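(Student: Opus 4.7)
My plan is to treat the two bounds separately, reusing the gradient formula \eqref{eqn:Bernoulli_PMF_gradient} for the first and the score-function formula \eqref{eqn:grad_log_Bernoulli} together with \Cref{lemma:grad_logvar_dist} for the second.

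For \eqref{eqn:Bernoulli_PMF_gradient_norm_bound}, I would start from the explicit gradient \eqref{eqn:Bernoulli_PMF_gradient}: since the basis vectors $\vec{e}_j$ are orthonormal and the signs $(-1)^{1-\design_j}$ disappear under squaring, we obtain
\begin{equation*}
  \sqnorm{ \nabla_{\hyperparam}\, \CondProb{\design}{\hyperparam} }
  \;=\; \sum_{j=1}^{\Nsens} \Biggl( \prod_{\substack{i=1\\ i\neq j}}^{\Nsens} \hyperparam_i^{\design_i}(1-\hyperparam_i)^{1-\design_i} \Biggr)^{2}
  \;=\; \sum_{j=1}^{\Nsens} \Biggl( \prod_{\substack{i=1\\ i\neq j}}^{\Nsens} \CondProb{\design_i}{\hyperparam_i} \Biggr)^{2}.
\end{equation*}
Because each scalar Bernoulli factor $\CondProb{\design_i}{\hyperparam_i}\in[0,1]$, the product over $i\neq j$ is bounded above by any single factor in the product, hence by the minimum one: $\prod_{i\neq j}\CondProb{\design_i}{\hyperparam_i}\le \min_{i\neq j}\CondProb{\design_i}{\hyperparam_i}$. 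Replacing each inner term by the maximum over $j$ and summing yields the bound $\Nsens\bigl(\max_j\min_{i\neq j}\CondProb{\design_i}{\hyperparam_i}\bigr)^{2}$; taking square roots gives \eqref{eqn:Bernoulli_PMF_gradient_norm_bound}.

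For \eqref{eqn:grad_log_Bernoulli_norm_bound}, the first equality is immediate: by \Cref{lemma:grad_logvar_dist} the expectation of the score is zero, so $\Expect{}{\sqnorm{\nabla_{\hyperparam}\log\CondProb{\design}{\hyperparam}}}=\brVar{\nabla_{\hyperparam}\log\CondProb{\design}{\hyperparam}}=\sum_{i=1}^{\Nsens}\frac{1}{\hyperparam_i-\hyperparam_i^2}$. The remaining task is purely algebraic: I would split the summand using the partial-fraction identity
\begin{equation*}
  \frac{1}{\hyperparam_i(1-\hyperparam_i)} \;=\; \frac{1}{\hyperparam_i} + \frac{1}{1-\hyperparam_i},
\end{equation*}
and then bound each term by its worst component, $\frac{1}{\hyperparam_i}\le \frac{1}{\min_i \hyperparam_i}$ and $\frac{1}{1-\hyperparam_i}\le \frac{1}{1-\max_i\hyperparam_i}$, summing over $i=1,\dots,\Nsens$ to produce the claimed bound.

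The only delicate point is well-definedness of \eqref{eqn:grad_log_Bernoulli_norm_bound} at the boundary of the feasible box: the right-hand side blows up whenever some component of $\hyperparam$ is $0$ or $1$. I would handle this exactly as the authors do in \Cref{subsec:stochastic_OED_solution}, namely by assuming $\hyperparam\in(0,1)^{\Nsens}$ without loss of generality, since any boundary component corresponds to a degenerate coordinate that is projected out and contributes neither to the log-probability nor its gradient. No extra machinery is needed; both bounds reduce to elementary manipulations once \Cref{lemma:grad_logvar_dist} and the explicit formulas for the PMF gradient and score are in hand.
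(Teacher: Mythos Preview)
Your proposal is correct and follows essentially the same route as the paper. For \eqref{eqn:Bernoulli_PMF_gradient_norm_bound} both you and the paper expand the squared norm via orthonormality of the $\vec{e}_j$, bound each product $\prod_{i\neq j}\CondProb{\design_i}{\hyperparam_i}$ by its smallest factor, and then take the maximum over $j$; for \eqref{eqn:grad_log_Bernoulli_norm_bound} the paper recomputes $\Expect{}{\sqnorm{\nabla_\hyperparam\log\CondProb{\design}{\hyperparam}}}=\sum_i\bigl(\tfrac{1}{\hyperparam_i}+\tfrac{1}{1-\hyperparam_i}\bigr)$ from scratch, whereas you invoke \Cref{lemma:grad_logvar_dist} and a partial-fraction split to reach the same expression---a slightly cleaner but equivalent argument.
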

    \begin{proof}
      \begin{equation}
        \begin{aligned}
          \sqnorm{ \nabla_{\hyperparam}\, \CondProb{\design}{\hyperparam} }
            &= \sqnorm{
              \sum_{j=1}^{\Nsens} (-1)^{1-\design_j}
              \prod_{\substack{i=1 \\ k\neq j}}^{\Nsens} {\hyperparam_i ^ {\design_i}
              \left(1-\hyperparam_i\right)^{1-\design_i}}
              \vec{e}_j
              } \\
            &\leq
            \sum_{j=1}^{\Nsens} \left| (-1)^{1-\design_j} \right|
            \left(
              \prod_{\substack{i=1 \\ k\neq j}}^{\Nsens} {\hyperparam_i ^ {\design_i}
              \left(1-\hyperparam_i\right)^{1-\design_i}} \right)^2  \\
            &\leq
              \sum_{j=1}^{\Nsens} \, \min\limits_{\substack{i=1,\ldots,\Nsens\\k\neq j}}
              \left( \CondProb{\design_i}{\hyperparam_i}\right)^2  
            \leq
              \Nsens \, \max\limits_{j=1,\ldots,\Nsens}
              \min\limits_{\substack{i=1,\ldots,\Nsens\\k\neq j}}
              \left(  \CondProb{\design_i}{\hyperparam_i} \right)^2  \,,
        \end{aligned}
      \end{equation}
      which prove the first 
      inequality~\eqref{eqn:Bernoulli_PMF_gradient_norm_bound}. 
      By utilizing~\eqref{eqn:grad_log_Bernoulli}, we have 
      \begin{equation}
        \begin{aligned}
          \sqnorm{ \nabla_{\hyperparam} \log{ \CondProb{\design}{\hyperparam} } }
          &=
            \sqnorm{\sum_{i=1}^{\Nsens}
            \left(
              \frac{\design_i}{\hyperparam_i} + \frac{\design_i-1}{1-\hyperparam_i}
            \right) \,\vec{e}_i
          }
          =
            \sum_{i=1}^{\Nsens}
            \left(
              \frac{\design_i}{\hyperparam_i} + \frac{\design_i-1}{1-\hyperparam_i}
            \right)^2
          \\
          &=
            \sum_{i=1}^{\Nsens} \left(
              \frac{\design_i^2}{\hyperparam_i^2}
            + 2
              \frac{\design_i(\design_i-1)}{\hyperparam_i(1-\hyperparam_i)}
            + \frac{\left(\design_i-1\right)^2}{\left(1-\hyperparam_i\right)^2}
            \right)
          =
            \sum_{i=1}^{\Nsens} \left(
              \frac{\design_i}{\hyperparam_i^2}
            + \frac{\design_i-1}{\left(1-\hyperparam_i\right)^2}
            \right)
            \,.
        \end{aligned}
      \end{equation}
      where the last relation follows given the fact
      that $\design_i\in\{0, 1\}$, and hence
      $\design_i^2=\design_i\,,\forall\, i=1, 2, \ldots,\Nsens$.
      Taking the expectation of both sides, we get
      \begin{equation}
        \begin{aligned}
          \Expect{\design}{
            \sqnorm{ \nabla_{\hyperparam} \log{ \CondProb{\design}{\hyperparam} } }
          }
          &= \Expect{\design}{
            \sum_{i=1}^{\Nsens} \left( 
            \frac{\design_i}{\hyperparam_i^2}
            + \frac{\design_i-1}{\left(1-\hyperparam_i\right)^2}
            \right)
          }
          =
            \sum_{i=1}^{\Nsens} \left(
            \frac{\Expect{}{\design_i }}{\hyperparam_i^2}
            + \frac{\Expect{}{\design_i} -1}{\left(1-\hyperparam_i\right)^2}
            \right)
          \\
          &=  \sum_{i=1}^{\Nsens}\left( \frac{\hyperparam_i}{\hyperparam_i^2}
            + \frac{\hyperparam_i -1}{\left(1-\hyperparam_i\right)^2}
            \right)
          =  \sum_{i=1}^{\Nsens} \left( 
            \frac{1}{\hyperparam_i}
            + \frac{1}{\left(1-\hyperparam_i\right)}
            \right)
          \\
          &=  \sum_{i=1}^{\Nsens} \frac{1}{\hyperparam_i}
            + \sum_{i=1}^{\Nsens} \frac{1}{\left(1-\hyperparam_i\right)}
          \leq  \frac{\Nsens}{\min\limits_i \hyperparam}
            + \frac{\Nsens}{1-\max\limits_i \hyperparam}
          \,,
        \end{aligned}
      \end{equation}
      which completes the proof of~\eqref{eqn:grad_log_Bernoulli_norm_bound}.
      $\quad$
    \end{proof}



\bibliographystyle{siamplain}
\bibliography{Bib/optimization_references,Bib/oed_references,Bib/ahmed_citations,Bib/data_assimilation}


\iftrue
\null \vfill
  \begin{flushright}
  \scriptsize \framebox{\parbox{3.2in}{
  The submitted manuscript has been created by UChicago Argonne, LLC,
  Operator of Argonne National Laboratory (``Argonne"). Argonne, a
  U.S. Department of Energy Office of Science laboratory, is operated
  under Contract No. DE-AC02-06CH11357. The U.S. Government retains for
  itself, and others acting on its behalf, a paid-up nonexclusive,
  irrevocable worldwide license in said article to reproduce, prepare
  derivative works, distribute copies to the public, and perform
  publicly and display publicly, by or on behalf of the Government.
  The Department of
  Energy will provide public access to these results of federally sponsored research in accordance
  with the DOE Public Access Plan. http://energy.gov/downloads/doe-public-access-plan. }}
  \normalsize
  \end{flushright}
\fi


\end{document}